\newtheorem{theorem}{Theorem}[section]
\newtheorem{lemma}[theorem]{Lemma}
\newtheorem{prop}[theorem]{Proposition}
\newtheorem{corollary}[theorem]{Corollary}
\newtheorem{observation}[theorem]{Observation}
\theoremstyle{definition}
\newtheorem{definition}[theorem]{Definition}
\newcommand{\al}[1]{\begin{align*}#1\end{align*}}
\newcommand{\algn}[1]{\begin{align}#1\end{align}}
\newcommand{\eqq}[1]{\begin{equation}#1\end{equation}}
\newcommand{\p}{\mathbb P}
\newcommand{\pp}{\mathbb P_p}
\newcommand{\E}{\mathbb E}
\newcommand{\R}{\mathbb R}
\newcommand{\Zd}{\mathbb Z^d}
\newcommand{\N}{\mathbb N}
\newcommand{\dd}{\, \mathrm{d}}
\newcommand{\C}{\mathscr {C}}
\newcommand{\thinn}[1]{\langle #1 \rangle}
\newcommand{\piv}[1]{\textsf {Piv}(#1)}
\newcommand{\taup}{\tau_p}
\newcommand{\taupf}{\tau_p^\bullet}
\newcommand{\taupo}{\tau_p^\circ}
\newcommand{\ttaup}{\gamma_p}
\newcommand{\conn}[3]{#1 \longleftrightarrow #2\textrm { in } #3}
\newcommand{\oconn}{\leftrightsquigarrow}
\newcommand{\jeq}{J}
\newcommand{\jek}{J_k}
\newcommand{\throughconn}[1]{\xleftrightarrow{\ #1 \ }}
\newcommand{\offconn}[3]{#1 \longleftrightarrow #2\textrm { off } #3}
\newcommand{\noffconn}[3]{#1 \centernot\longleftrightarrow #2\textrm { off } #3}
\newcommand{\taupk}{\tau_{p,k}}
\newcommand{\ftau}{\widehat\tau_p}
\newcommand{\ftaupk}{\widehat\tau_{p,k}}
\newcommand{\connf}{D}
\newcommand{\fjeq}{\widehat\jeq}
\newcommand{\fconnf}{\widehat\connf}
\newcommand{\greenslam}{G_\lambda}
\newcommand{\fgreenslam}{\widehat G_\lambda}
\newcommand{\fgmu}{\widehat G_{\lamp}}
\newcommand{\ulam}{\widehat U_{\lamp}}
\newcommand{\fspace}{(-\pi,\pi]^d}
\newcommand{\fpip}{\widehat\Pi_p}
\newcommand{\lamp}{\lambda_p}
\newcommand{\trip}{\triangle_p}
\newcommand{\tripo}{\triangle_p^\circ}
\newcommand{\tripf}{\triangle_p^{\bullet}}
\newcommand{\tripof}{\triangle_p^{\bullet\circ}}
\newcommand{\tripoff}{\triangle_p^{\bullet\bullet\circ}}
\newcommand{\e}{\text{e}}
\newcommand{\orig}{\mathbf{0}}
\renewcommand{\i}{\text{i}}
\newcommand{\col}[1]{#1}
\newcommand{\hak}{\widehat a(k)}
\newcommand{\hao}{\widehat a(0)}
\newcommand{\hnk}{\widehat N(k)}
\newcommand{\hfk}{\widehat F(k)}
\numberwithin{equation}{section}
\definecolor{darkorange}{RGB}{255,165,0}
\definecolor{altviolet}{RGB}{139,0,139}
\definecolor{turquoise}{RGB}{64,224,208}
\title{Critical site percolation in high dimension}
\author{Markus Heydenreich\footnote{Ludwig-Maximilians-Universit\"at M\"unchen, Mathematisches Institut, Theresienstr.\ 39, 80333 M\"unchen, Germany. \newline E-mail: m.heydenreich@lmu.de; matzke@math.lmu.de} 
		\and Kilian Matzke\footnotemark[1] }
\begin{document}
\maketitle

\begin{abstract}
We use the lace expansion to prove an infra-red bound for site percolation on the hypercubic lattice in high dimension. This implies the triangle condition and allows us to derive several critical exponents that characterize mean-field behavior in high dimensions. 
\end{abstract}

\noindent\emph{Mathematics Subject Classification (2010).} 60K35, 82B43.

\noindent\emph{Keywords and phrases.} Site percolation, lace expansion, mean-field behavior, infra-red bound

\section{Introduction} \label{sec:introduction}
\subsection{Site percolation on the hypercubic lattice}
\label{sec:defs}
We consider site percolation on the hypercubic lattice $\Zd$, where sites are independently \emph{occupied} with probability $p\in[0,1]$, and otherwise \emph{vacant}. 
More formally, for $p \in [0,1]$, we consider the probability space $(\Omega, \mathcal F, \pp)$, where $\Omega = \{0,1\}^{\Zd}$, the $\sigma$-algebra $\mathcal F$ is generated by the cylinder sets, and $\pp = \bigotimes_{x\in\Zd} \text{Ber}(p)$ is a product-Bernoulli measure. We call $\omega\in\Omega$ a configuration and say that a site $x\in\Zd$ is \emph{occupied} in $\omega$ if $\omega(x)=1$. If $\omega(x)=0$, we say that the site $x$ is \emph{vacant}. For convenience, we identify $\omega$ with the set of occupied sites $\{x\in\Zd: \omega(x)=1\}$.

Given a configuration $\omega$, we say that two points $x \neq y\in\Zd$ are \emph{connected} and write $x \longleftrightarrow y$ if there is an \emph{occupied path} between $x$ and $y$---that is, there are points $x=v_0, \ldots, v_k=y$ in $\Zd$ with $k\in\N_0 := \N\cup \{0\}$ such that $| v_i-v_{i-1}| = 1$ (with $|y| = \sum_{i=1}^{d} |y_i|$ the $1$-norm) for all $1 \leq i \leq k$, and $v_i\in\omega$ for $1 \leq i \leq k-1$ (i.e., all  \emph{inner} sites are occupied). 
Two neighbors are automatically connected (i.e.,~$\{x \longleftrightarrow y\}=\Omega$ for all $x,y$ with $|x-y|=1$). 
\col{Many authors prefer a different definition of connectivity by requiring both endpoints to be occupied as well. These two notions are closely related and we explain our choice in Section~\ref{sec:intro:literature}.}
Moreover, we adopt the convention that $\{x \longleftrightarrow x\}=\varnothing$, that is, $x$ is \emph{not} connected to itself.

We define the \emph{cluster} of $x$ to be $\C(x):= \{x\} \cup \{y \in \omega: x \longleftrightarrow y\}$. Note that apart from $x$ itself, points in $\C(x)$ need to be occupied. We also define the expected cluster size (or \emph{susceptibility})  $\chi(p) = \E_p[|\C(\orig)|]$, where for a set $A \subseteq \Zd$, we let $|A|$ denote the cardinality of $A$, and $\orig$ the origin in $\Zd$.

We define the \emph{two-point function} $\tau_p\colon \Zd\to[0,1]$ by
	\[\tau_p(x):=\pp(\orig \longleftrightarrow x).\]
The \emph{percolation probability} is defined as $\theta(p) := \pp(\orig \longleftrightarrow \infty) = \pp( | \C(\orig)| = \infty)$. We note that $p \mapsto \theta(p)$ is increasing and define the \emph{critical point} for $\theta$ as
	\[ p_c = p_c(\Zd) = \inf\{ p > 0: \theta(p) > 0 \}. \]
Note that we can define a critical point $p_c(G)$ for any graph $G$. As we only concern ourselves with $\Zd$, we write $p_c$ or $p_c(d)$ the refer to the critical point of $\Zd$.

\subsection{Main result} \label{sec:intro:main_theorems}
The \emph{triangle condition} is a versatile criterion for several critical exponents to exist and to take on their mean-field value. In order to introduce this condition, we define the \emph{open triangle diagram} as
	\[ \triangle_p(x) = p^2 (\tau_p \ast\tau_p\ast\tau_p)(x) \]
and the triangle diagram as $\triangle_p = \sup_{x \in \Zd} \triangle_p(x)$. In the above, the convolution `$\ast$' is defined as $(f\ast g)(x) = \sum_{y\in\Zd} f(y) g(x-y)$. We also set $f^{\ast j} =f^{\ast (j-1)} \ast f$ and $f^{\ast 1} \equiv f$. The triangle condition is the condition that $\triangle_{p_c} <\infty$. To state Theorem~\ref{thm:main_theorem_triangle_condition}, we recall that the discrete Fourier transform of an absolutely summable function $f:\Zd \to \R$ is defined as $\widehat f:\fspace \to \mathbb C$ with
	\[ \widehat f(k) = \sum_{x \in \Zd} \e^{\i k \cdot x} f(x),\]
where $k\cdot x = \sum_{j=1}^{d} k_j x_j$ denotes the scalar product. Letting $D(x) = \tfrac{1}{2d} \mathds 1_{\{|x|=1\}}$ for $x\in\Zd$ be the step distribution of simple random walk, we can formulate our main theorem:

\begin{theorem}[The triangle condition and the infra-red bound]\label{thm:main_theorem_triangle_condition}
There exist $d_0 \geq 6$ and a constant $C=C(d_0)$ such that, for all $d > d_0$,
	\eqq{ p|\ftau(k)| \leq \frac{|\fconnf(k)| + C /d}{1 - \fconnf(k)} \label{eq:intro:infra-red_bound}}
for all $k \in (-\pi, \pi]^d$ uniformly in $p \in [0,p_c]$ (we interpret the right-hand side of~\eqref{eq:intro:infra-red_bound} as $\infty$ for $k=0$). Additionally, $\triangle_p \leq C/d$ uniformly in $[0, p_c]$, and the triangle condition holds.
\end{theorem}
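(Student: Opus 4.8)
The plan is to run a lace expansion for site percolation, in the spirit of the classical bond-percolation argument of Hara and Slade, but adapted to the site model where the two-point function has ``occupied inner vertices''. The first step is to derive an expansion identity of the form $\taup(x) = \sum_{y} \pp(\orig\text{-occupied type event}) D(y-\cdot)\ast\taup + \Pi_p(x) + (\Pi_p \ast \taup)(x)$-ish; concretely, one writes $\tau_p = \delta + p D\ast\tau_p + \Pi_p + (\Pi_p\ast pD\ast\tau_p)$ (the precise bookkeeping of the factor $p$ differs from bond percolation because here it is the \emph{inner} sites that must be occupied), where $\Pi_p = \sum_{N\ge 0}(-1)^N \Pi_p^{(N)}$ is the lace-expansion coefficient built from nested double connections. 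In Fourier space this becomes $\ftau(k) = \frac{1+\fpip(k)}{1 - p\fconnf(k)(1+\fpip(k))}$ or a close variant, so that the infra-red bound reduces to showing that $\fpip(k)$ is small, uniformly in $p\le p_c$ and in $k$, once $d$ is large.

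The heart of the proof is then a bootstrap (forbidden-region) argument. I would introduce a bootstrap function, e.g.
\[
f(p) = \max\Big\{ p,\ \sup_k \frac{p|\ftau(k)|(1-\fconnf(k))}{|\fconnf(k)| + C/d},\ \text{plus a term controlling } \trip \Big\},
\]
show $f$ is continuous on $[0,p_c)$, that $f(p)\le 3$ (say) on a small initial interval, and — the crucial implication — that $f(p)\le 3$ forces $f(p)\le 2$. The last implication is where the diagrammatic estimates enter: assuming the infra-red bound with a worse constant, one bounds each $\Pi_p^{(N)}$ by a product of triangle-type diagrams, which via the assumed bound and the random-walk estimates for $\widehat D$ are each $O(1/d)$, with the sum over $N$ converging geometrically; feeding this back into the expansion identity improves the constant. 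This yields the infra-red bound on $[0,p_c)$, hence on $[0,p_c]$ by taking $p\uparrow p_c$ (using that $\taup(x)$ is left-continuous / monotone and the bound passes to the limit).

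From the infra-red bound, the triangle bound $\trip \le C/d$ follows by a standard Parseval/convolution computation: $\trip(x) = p^2(\taup^{\ast 3})(x)$ and $\trip \le p^2 \|(\taup)^{\ast 3}\|_\infty$, which one splits into a small-$|x|$ contribution (controlled by random-walk return probabilities, $O(1/d)$ in high dimension) and a Fourier contribution $\int_{\fspace} (p|\ftau(k)|)^3 \dd k$ bounded via \eqref{eq:intro:infra-red_bound} by $\int |\fconnf(k)+C/d|^3/(1-\fconnf(k))^3\dd k$, which is finite and $O(1/d)$ for $d>d_0$ by the usual simple-random-walk Green's function estimates (these require $d_0\ge 6$ so that the relevant integral converges). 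Since the bound is uniform on $[0,p_c]$, evaluating at $p=p_c$ gives $\trip[p_c]\le C/d < \infty$, i.e.\ the triangle condition.

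The main obstacle I anticipate is not the bootstrap skeleton — that is structurally identical to Hara–Slade — but rather \emph{setting up the site-percolation lace expansion correctly} and obtaining clean diagrammatic bounds on the $\Pi_p^{(N)}$: in the site model the natural combinatorial objects (e.g.\ the analogue of ``pivotal bonds'' becoming pivotal vertices, and the need to keep careful track of which vertices are forced occupied versus free) make the inclusion–exclusion more delicate, and one must verify that the resulting diagrams are still bounded by convolutions of $\taup$ that are amenable to the infra-red input. A secondary technical point is ensuring all estimates are genuinely uniform in $p\le p_c$, which requires knowing a priori that $\chi(p)<\infty$ for $p<p_c$ (so that $\ftau(k)$ and the expansion make sense) — this is standard but must be invoked.
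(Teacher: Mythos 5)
Your overall architecture (expansion identity, diagrammatic bounds, bootstrap, random-walk integrals) matches the paper's, and you correctly flag the site-specific bookkeeping of occupied vertices as the main difficulty. But the places where you write ``or a close variant'' are exactly where the site model departs from Hara--Slade, and as written your sketch would not produce the stated bounds. First, your expansion identity $\tau_p=\delta+pD\ast\tau_p+\Pi_p+\Pi_p\ast pD\ast\tau_p$ is the \emph{bond} form; the correct site identity (Propositions~\ref{thm:lace_expansion} and~\ref{thm:convergence_of_LE}) is $\taup=\jeq+\Pi_p+p\big((\jeq+\Pi_p)\ast\taup\big)$ with $\jeq=2dD$: the leading term is $2dD$ rather than $\delta$, and there is \emph{no} extra convolution with $D$. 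Solving in Fourier space gives $p\ftau(k)=\hak/(1-\hak)$ with $\hak=2dp\,\fconnf(k)+p\fpip(k)$, and it is precisely this that produces the numerator $|\fconnf(k)|+C/d$ in~\eqref{eq:intro:infra-red_bound}; your version would give a numerator of order $1$. Relatedly, your bootstrap function has no component controlling the discretized second derivative $\ftaupk(l)$ (the paper's $f_3$); some such component is indispensable, because closing the bootstrap requires the displacement bound $p\sum_x[1-\cos(k\cdot x)]|\Pi_p(x)|\le[1-\fconnf(k)]\,C/d$, whose diagrammatic estimates (Section~\ref{sec:db:displacement}) involve $\taupk$ and cannot be obtained from the $f_2$-type ratio alone.

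Second, and most concretely: your derivation of $\trip\le C/d$ from the infra-red bound miscounts powers of $p$. One has $\trip(x)=p^2(\taup^{\ast3})(x)=p^{-1}\int \e^{-\i k\cdot x}\,(p\ftau(k))^3\,\frac{\dd k}{(2\pi)^d}$; cubing the infra-red bound and using Proposition~\ref{thm:random_walk_triangle} makes the integral $O(1/d)$, but the prefactor $p^{-1}\approx 2d$ then yields only $\trip=O(1)$. That suffices for mere finiteness of $\triangle_{p_c}$, but not for the stated $\trip\le C/d$, and---more importantly---not for the smallness of the various open triangles that makes the bounds $p\sum_x\Pi_p^{(n)}(x)\le\tripf(\orig)\,(T_p)^n$ summable in $n$; without that smallness the expansion does not converge and the bootstrap does not close. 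The paper's fix (Observations~\ref{obs:J_convolutions} and~\ref{obs:tau_J_extraction}, Lemma~\ref{lem:bootstrap:bounds_on_V}) is to peel factors of $\jeq$ off $\taup$ via $\taup\le\jeq+p(\jeq\ast\taup)$ until the numbers of $p$'s and $\taup$'s match, paying for each extraction with a factor $\fconnf$ inside the random-walk integral; this rebalancing is exactly the ``mismatching number of $p$'s and $\taup$'s in $\trip$'' flagged in the introduction as a novelty of the site setting, and it is absent from your sketch.
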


\subsection{Consequences of the infra-red bound}
The triangle condition is the classical criterion for mean-field behavior in percolation models. The triangle condition implies readily that $\theta(p_c)=0$ (since otherwise $\triangle_{p_c}$ could not be finite), a problem that is still open in smaller dimension (except $d=2$). 

Moreover, the triangle condition implies that a number of critical exponents take on their mean-field values. Indeed, using results by Aizenman and Newman~\cite[Section 7.7]{AizNew84}, the triangle condition implies that the critical exponent $\gamma$ exists and takes its mean-field value 1, that is 
	\eqq{ 	\frac{c}{p_c-p}\leq \chi(p)\leq \frac{C}{p_c-p} \label{eq:intro:chi_crit_exponent}}
for $p<p_c$ and constants $0<c<C$. We write $\chi(p) \sim (p_c-p)^{-1}$ as $p\nearrow p_c$ for the behavior of $\chi$ as in~\eqref{eq:intro:chi_crit_exponent}. There are several other critical exponents that are predicted to exist. For example, $\theta(p) \sim (p-p_c)^\beta$ as $p \searrow p_c$, and $\p_{p_c}(|\C(\orig)| \geq n) \sim n^{-1/\delta}$ as $n\to\infty$. 

Barsky and Aizenman~\cite{AizBar91} show that under the triangle condition,
	\eqq{ \delta =2 \qquad \text{and} \qquad \beta=1. \label{eq:intro:beta_delta}}
Their results are stated for a class of percolation models including site percolation. Hence, Theorem~\ref{thm:main_theorem_triangle_condition} implies~\eqref{eq:intro:beta_delta}. However, ``for simplicity of presentation'', the presentation of the proofs is restricted to bond percolation models.

Moreover, as shown by Nguyen~\cite{Ngu87}, Theorem~\ref{thm:main_theorem_triangle_condition} implies that $\Delta=2$, where $\Delta$ is the gap exponent.

\subsection{Discussion of literature and results}\label{sec:intro:literature}
Percolation theory is a fundamental part of contemporary probability theory and its foundations are generally attributed to a 1957 paper of Broadbent and Hammersley \cite{BroHam57}. 
Meanwhile, a number of textbooks appeared, and we refer to Grimmett~\cite{Gri99} for a comprehensive treatment of the subject, as well as Bollob{\'a}s and Riordan~\cite{BolRio06}, Werner~\cite{Wer09} and Beffara and Duminil-Copin~\cite{BefDum13} for extra emphasis on the exciting recent progress in two-dimensional percolation.

The investigation of percolation in high dimensions was started by the seminal 1990 paper of Hara and Slade~\cite{HarSla90}, who applied the lace expansion to prove the triangle condition for bond percolation in high dimension. 
A number of modifications and extensions of the lace expansion method for bond percolation have appeared in the meantime. The expansion itself is presented in Slade's Saint Flour notes \cite{Sla06}. A detailed account of the full lace expansion proof for bond percolation (including convergence of the expansion and related results) is given in a recent textbook by the first author and van der Hofstad~\cite{HeyHof17}. 

Despite the fantastic understanding of \emph{bond} percolation in high dimensions, \emph{site} percolation is not yet analyzed with this method, and the present paper aims to remedy this situation. 
Together with van der Hofstad and Last~\cite{HeyHofLasMat19}, we recently applied the lace expansion to the random connection model, which can be viewed as a continuum site percolation model. The aim of this paper is to give a rigorous exhibition of the lace expansion applied to one of the simplest site percolation lattice models. 
We have chosen to set up the proofs in a similar fashion to corresponding work for bond percolation, making it easier to oversee by readers who are familiar with that literature. Indeed, Sections \ref{sec:diag_bounds} and \ref{sec:bootstrap_analysis} follow rather closely the well-established method in \cite{BorChaHofSlaSpe05,HeyHof17,HeyHofSak08}, which are all based on Hara and Slade's foundational work \cite{HarSla90}. Interestingly, there is a difference in the expansion itself, which has numerous repercussions in the diagrammatic bounds and also in the different form of the infrared bounds. We now explain these differences in more detail. 

A key insight for the analysis of high-dimensional site percolation is the frequent occurrence of \emph{pivotal points}, which are crucial for the setup of the lace expansion. Suppose two sites $x$ and $y$ are connected, then an intermediate vertex $u$ is \emph{pivotal} for this connection if every occupied path from $x$ to $y$ passes through $u$. We then break up the original connection event in two parts: a connection between $x$ and $u$, and a second connection between $u$ and $y$. In high dimension, we expect the two connection events to behave rather independently, except for their joint dependence on the occupation status of $u$.
This little thought demonstrates that it is highly convenient to define connectivity events as in Section~\ref{sec:defs}, and thus treat the occupation of the vertex $u$ independent of the two new connection events. 
A more conventional choice of connectivity, where two points can only be connected if they are both occupied, is obtained a posteriori via
	\eqq{ \pp(\{x\longleftrightarrow y \} \cap \{ x,y\text{ occupied}\})=p^2\,\tau_p(y-x),\qquad x,y\in\Zd, x\neq y. \label{eq:intro:tlam_p_squared_alt_def}}
Our definition of $\taup$ avoids divisions by $p$, not only in the triangle, but throughout Sections~\ref{sec:lace_expansion} and~\ref{sec:diag_bounds}. 

Paying close attention to the right number of factors of $p$ is a guiding thread of the technical aspects of Sections~\ref{sec:diag_bounds} and~\ref{sec:bootstrap_analysis} that sets site percolation apart from bond percolation. Like it is done in bond percolation, the diagrammatic events by which we bound the lace-expansion coefficients depend on quite a few more points than the pivotal points. In contrast, every pair of points among these that may coincide hides a case distinction, and a coincidence case leads to a new diagram, typically with a smaller number of factors of $p$ (see, e.g.,~\eqref{eq:db:Pi_bound_by_F}). We handle this, for example, by encoding such coincidences in $\taupo$ and $\taupf$ (which are variations of $\taup$ respecting an extra factor of $p$ and/or interactions at $\orig$), see Definition~\ref{def:db:modified_two-point_functions}. In Section~\ref{sec:bootstrap_analysis}, the mismatching number of $p$'s and $\taup$'s in $\trip$ needs to be resolved.

The differing diagrams in Section~\ref{sec:diag_bounds} due to coincidences already appear in the lace-expansion coefficients of small order. This manifests itself in the answer to a classical question for high-dimensional percolation; namely, to devise an expansion of the critical threshold $p_c(d)$ when $d\to\infty$. It is known in the physics literature that
	\begin{equation}\label{eq:expansion-phys}
	p_c(d) = (2d)^{-1} + \frac 52 (2d)^{-2} + \frac{31}{4}(2d)^{-3} + \frac{75}{4} (2d)^{-4} + \frac{11977}{48}(2d)^{-5}+ \frac{209183}{96}(2d)^{-6}+\cdots.
	\end{equation}
The first four terms are due to Gaunt, Ruskin and Sykes~\cite{GauRusSyk76}, the latter two were found recently by Mertens and Moore~\cite{MerMoo18} by exploiting involved numerical methods. 

The lace expansion devised in this paper enables us to give a rigorous proof of the first terms of \eqref{eq:expansion-phys}. Indeed, we use the representation obtained in this paper to show that 
	\eqq{ p_c(d) = (2d)^{-1} + \frac 52 (2d)^{-2} + \frac{31}{4} (2d)^{-3} + \mathcal O\left( (2d)^{-4} \right) \quad\text{as $d \to \infty$}.  \label{eq:expansion}}
This is the content of a separate paper~\cite{HeyMat19b}. Deriving $p_c$ expansions from lace expansion coefficients has been earlier achieved for bond percolation by Hara and Slade~\cite{HarSla95} and van der Hofstad and Slade~\cite{HofSla06}. \col{Comparing~\eqref{eq:expansion} to their expansion for bond percolation confirms that already the second coefficient is different.

Proposition~\ref{thm:convergence_of_LE} proves the convergence of the lace expansion for $p<p_c$, yielding an identity for $\taup$ of the form
	\eqq{ \taup(x) = C(x) + p(C \ast \taup)(x), \label{eq:intro:OZE}}
where $C$ is the \emph{direct-connectedness function} and $C(\cdot)=2d D(\cdot) + \Pi_p(\cdot)$ (for a definition of $\Pi_p$, see Definition~\ref{def:le:lace_expansion_coefficients} and Proposition~\ref{thm:convergence_of_LE}). In fluid-state statistical mechanics,~\eqref{eq:intro:OZE} is known as the \emph{Ornstein-Zernike equation} (OZE), a classical equation that is typically associated to the total correlation function. 

We can juxtapose~\eqref{eq:intro:OZE} with the converging lace expansion for bond percolation. There, $\taup^{\text{bond}}(x)$ is the probability that $\orig$ and $x$ are connected by a path of occupied bonds, and we have
	\eqq{ \taup^{\text{bond}}(x) = C^{\text{bond}}(x) + 2dp (C^{\text{bond}} \ast\connf\ast\ \taup^{\text{bond}})(x), \label{eq:intro:OZE_bond}}
where $C^{\text{bond}}(x) = \mathds 1_{\{x = \orig\}} + \Pi_p^{\text{bond}}(x)$. Thus, there is an extra convolution with $2dD$. Only for the site percolation two-point function (as defined in this paper), the lace expansion coincides with the OZE.

We want to touch on how this relates to the infra-red bound~\eqref{eq:intro:infra-red_bound}. To this end,} define the random walk Green's function as $\greenslam(x) = \sum_{m \geq 0} \lambda^m D^{\ast m}(x)$ for $\lambda\in(0,1]$. Consequently,
	\[ \fgreenslam(k) = \frac{1}{1-\lambda \fconnf(k)}.\]
One of the key ideas behind the lace expansion for bond percolation is to show that the two-point function is close to $\greenslam$ in an appropriate sense (this includes an appropriate parametrization of $\lambda$). Solving the OZE in Fourier space for $\ftau$ already hints at the fact that in site percolation, $p\ftau$ should be close to $ \fgreenslam\widehat D$ and $p\taup$ should be close to $D\ast\greenslam$. As a technical remark, we note that $\fgreenslam$ is uniformly lower-bounded, whereas $\fgreenslam \fconnf$ is not, which poses some inconvenience later on.

The complete graph may be viewed as a mean-field model for percolation, in particular when we analyze clusters on high-dimensional tori, cf.\ \cite{HeyHof07}. Interestingly, the distinction between bond and site percolation exhibits itself rather drastically on the complete graph: for bond percolation, we obtain the usual Erd\H{o}s-R\'enyi random graph with its well-known phase transition (see, e.g.,~\cite{JanLucRuc00}), whereas for site percolation, we obtain again a complete graph with a binomial number of points.

Theorem \ref{thm:main_theorem_triangle_condition} proves the triangle condition in dimension $d> d_0$ for sufficiently large $d_0$. It is folklore in the physics literature that $d_0=6$ suffices (6 is the ``upper critical dimension'') but the perturbative nature of our argument does not allow us to derive that. Instead, we only get the result for \emph{some} $d_0\ge 6$. 
For bond percolation, already the original paper by Hara and Slade \cite{HarSla90} treated a second, spread-out version of bond percolation, and they proved that for this model, $d_0=6$ suffices (under suitable assumption on the spread-out nature). For ordinary bond percolation, it was announced that $d_0=19$ suffices for the triangle condition in \cite{HarSla94}, and the number 19 circulated for many years in the community. Finally, Fitzner and van der Hofstad~\cite{FitHof17} devised involved numerical methods to rigorously verify that an adaptation of the method is applicable for $d > d_0=10$. 
It is clear that an analogous result of Theorem \ref{thm:main_theorem_triangle_condition} would hold for ``spread-out site percolation'' in suitable form (see e.g. \cite[Section 5.2]{HeyHof17}).

\subsection{Outline of the paper} \label{sec:intro:outine}
The paper is organized as follows. The aim of Section~\ref{sec:lace_expansion} is to establish a lace-expansion identity for $\taup$, which is formulated in Proposition~\ref{thm:lace_expansion}. To this end, we use Section~\ref{sec:le:tools} to state some known results that we are going to make use of in Section~\ref{sec:lace_expansion} as well as in later sections. We then introduce a lot of the language and quantities needed to state Proposition~\ref{thm:lace_expansion} in Section~\ref{sec:le:preparatory_statements}, followed by the actual derivation of the identity in Section~\ref{sec:le:derivation_of_the_LE}.

Section~\ref{sec:diag_bounds} bounds the lace-expansion coefficients derived in Section~\ref{sec:le:derivation_of_the_LE} in terms of simpler diagrams, which are large sums over products of two-point (and related) functions. Section~\ref{sec:bootstrap_analysis} finishes the argument via the so-called bootstrap argument. First, a \emph{bootstrap function} $f$ is introduced in Section~\ref{sec:boot:intro}. Among other things, it measures how close $\ftau$ is to $\fgreenslam$ (in a fractional sense). Section~\ref{sec:boot:consequences} shows convergence of the lace expansion for fixed $p<p_c$. Moreover, assuming that $f$ is bounded on $[0,p_c)$, it is shown that this convergence is uniform in $p$ (see first and second part of Proposition~\ref{thm:convergence_of_LE}). Lastly, Section~\ref{sec:boot:bootstrap_argument} actually proves said boundedness of $f$.

\section{The expansion}\label{sec:lace_expansion}

\subsection{The standard tools} \label{sec:le:tools}
We require two standard tools of percolation theory, namely Russo's formula and the BK inequality, both for increasing events. Recall that $A$ is called \emph{increasing} if $\omega \in A$ and $\omega \subseteq \omega'$ implies $\omega' \in A$. Given $\omega$ and an increasing event $A$, we introduce
	\[ \piv{A} = \{ y \in \Zd: \omega \cup\{y\} \in A, (\omega\setminus\{y\}) \notin A\}.\]
If $A$ is an increasing event determined by sites in $\Lambda \subset \Zd$ with $|\Lambda|<\infty$, then Russo's formula~\cite{Rus81}, proved independently by Margulis~\cite{Mar74}, tells us that
	\eqq{ \frac{\dd}{\dd p} \pp(A) = \E[|\piv{A}|] = \sum_{y \in \Lambda} \pp ( y \in \piv{A}). \label{eq:tools:russo}}
To state the BK inequality, let $\Lambda\subset \Zd$ be finite and, given $\omega\in\Omega$, let
	\[ [\omega]_\Lambda = \{ \omega' \in \Omega: \omega'(x) = \omega(x) \text{ for all } x \in\Lambda\} \]
be the cylinder event of the restriction of $\omega$ to $\Lambda$. For two events $A,B$, we can define the \emph{disjoint occurrence} as
	\[ A \circ B = \{ \omega: \exists K,L \subseteq \Zd: K \cap L = \varnothing, [\omega]_K \subseteq A, [\omega]_L \subseteq B\}.\]
The BK inequality, proved by van den Berg and Kesten~\cite{BerKes85} for increasing events, states that, given two increasing events $A$ and $B$,
	\eqq{ \pp(A \circ B) \leq \pp(A) \pp(B). \label{eq:tools:bk}}
The following proposition about simple random walk will be of importance later:
\begin{prop}[Random walk triangle,~\cite{HeyHof17}, Proposition 5.5] \label{thm:random_walk_triangle}
Let $m \in\N_0, n\geq 0$ and $\lambda\in[0,1]$. Then there exists a constant $c_{2m,n}^{\text{(RW)}}$ independent of $d$ such that, for $d>2n$,
	\[ \int_{\fspace} \frac{ \fconnf(k)^{2m}}{[1-\lambda\fconnf(k)]^n} \frac{\dd k}{(2\pi)^d} \leq c_{2m,n}^{\text{(RW)}} d^{-m}.  \]
\end{prop}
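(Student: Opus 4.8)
\emph{Proof plan.} The idea is to estimate the integral by splitting the torus $\fspace$ into a ``bulk'' region, where $\fconnf(k)$ stays bounded away from $1$ and the denominator is harmless, and a small ``infra\nobreakdash-red'' region around the origin, where $\fconnf(k)$ is close to $1$. Fix a small absolute constant $\epsilon_0\in(0,1)$; it will turn out that $\epsilon_0<4/(\pi\e)$ (e.g.\ $\epsilon_0=1/4$) is what is required. Write the left\nobreakdash-hand side of the claim as $J_{\mathrm I}+J_{\mathrm{II}}$, where $J_{\mathrm I}$ collects the contribution of $\{k\in\fspace:\fconnf(k)\le 1-\epsilon_0\}$ and $J_{\mathrm{II}}$ that of $\{k\in\fspace:\fconnf(k)>1-\epsilon_0\}$. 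Since $\fconnf(k)\le 1$ always, the case $\lambda=1$ is the worst, and all bounds below are uniform in $\lambda\in[0,1]$.

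\emph{The bulk term $J_{\mathrm I}$.} On its region one has, for every $\lambda\in[0,1]$, $1-\lambda\fconnf(k)\ge 1-\fconnf(k)\ge\epsilon_0$ when $\fconnf(k)\ge0$ and $1-\lambda\fconnf(k)\ge1\ge\epsilon_0$ when $\fconnf(k)<0$, so $[1-\lambda\fconnf(k)]^{-n}\le\epsilon_0^{-n}$ and hence $J_{\mathrm I}\le\epsilon_0^{-n}\int_{\fspace}\fconnf(k)^{2m}\,\dd k/(2\pi)^d=\epsilon_0^{-n}D^{\ast 2m}(\orig)$. It then remains to invoke the elementary bound $D^{\ast 2m}(\orig)\le C_m d^{-m}$, valid for all $d\ge 1$ with $C_m$ independent of $d$: expanding $\fconnf(k)^{2m}=d^{-2m}\big(\sum_{a=1}^d\cos k_a\big)^{2m}$ by the multinomial theorem and integrating termwise, only monomials in which every $\cos k_a$ occurs to an even power survive, and each such monomial involves at most $m$ distinct coordinates, contributing a combinatorial factor at most $c_m d^m$ against the prefactor $d^{-2m}$. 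Thus $J_{\mathrm I}\le\epsilon_0^{-n}C_m d^{-m}$, which already exhibits the claimed decay.

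\emph{The infra\nobreakdash-red term $J_{\mathrm{II}}$.} Here $\fconnf(k)\in(1-\epsilon_0,1]$, so $\fconnf(k)^{2m}\le 1$ and $1-\lambda\fconnf(k)\ge 1-\fconnf(k)>0$. Using $1-\cos t\ge\tfrac{2}{\pi^2}t^2$ on $[-\pi,\pi]$ gives $1-\fconnf(k)=\tfrac1d\sum_{a=1}^d(1-\cos k_a)\ge\tfrac{2}{\pi^2}\tfrac{\|k\|^2}{d}$, with $\|k\|^2:=\sum_{a=1}^d k_a^2$; in particular $\fconnf(k)>1-\epsilon_0$ forces $\|k\|^2<\tfrac{\pi^2\epsilon_0 d}{2}=:R_d^2$. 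Extending the domain of integration to the Euclidean ball of radius $R_d$, one obtains
\[ J_{\mathrm{II}}\le\Big(\tfrac{\pi^2 d}{2}\Big)^{n}\int_{\{\|k\|<R_d\}}\|k\|^{-2n}\,\frac{\dd k}{(2\pi)^d}=\Big(\tfrac{\pi^2 d}{2}\Big)^{n}\cdot\frac{|\mathbb S^{d-1}|}{(2\pi)^d}\cdot\frac{R_d^{\,d-2n}}{d-2n}, \]
where $|\mathbb S^{d-1}|=2\pi^{d/2}/\Gamma(d/2)$ and the radial integral $\int_0^{R_d}r^{\,d-1-2n}\,\dd r$ is finite \emph{precisely because} $d>2n$. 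Collecting powers, $(\pi^2 d/2)^n R_d^{\,d-2n}=(\pi^2 d/2)^{d/2}\epsilon_0^{(d-2n)/2}$, and the Stirling estimate $|\mathbb S^{d-1}|/(2\pi)^d\le\sqrt{d/\pi}\,\big(\tfrac{\e}{2\pi d}\big)^{d/2}$ turns this into $J_{\mathrm{II}}\le\tfrac{\sqrt{d/\pi}}{d-2n}\,\epsilon_0^{-n}\big(\tfrac{\pi\e\,\epsilon_0}{4}\big)^{d/2}$. Choosing $\epsilon_0<4/(\pi\e)$ makes the base $\tfrac{\pi\e\,\epsilon_0}{4}$ strictly less than $1$, so this bound decays super\nobreakdash-exponentially in $d$; hence $\sup_{d\in\N,\,d>2n}d^m J_{\mathrm{II}}$ is a finite constant depending only on $m,n$. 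Combining the two pieces, the proposition holds with $c_{2m,n}^{\text{(RW)}}:=\epsilon_0^{-n}C_m+\sup_{d\in\N,\,d>2n}\big[\tfrac{d^{m+1/2}\epsilon_0^{-n}}{\sqrt\pi\,(d-2n)}\big(\tfrac{\pi\e\,\epsilon_0}{4}\big)^{d/2}\big]$, which is independent of $d$ and of $\lambda$.

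\emph{Main obstacle.} The delicate part is $J_{\mathrm{II}}$: the integrand is largest — of order $(2d/\|k\|^2)^n$ — in exactly the region where $\fconnf(k)\approx1$, i.e.\ near the origin, and the only reason the contribution is small is that this region occupies an exponentially small fraction of the torus. Making this quantitative requires (i) bounding the volume of $\{\fconnf(k)>1-\epsilon_0\}$ by that of a ball of radius $O(\sqrt{\epsilon_0 d})$ and (ii) controlling $|\mathbb S^{d-1}|/(2\pi)^d$ by Stirling just well enough to beat the prefactor $(\pi^2 d/2)^n$; this forces $\epsilon_0$ to be a small \emph{absolute} constant so that the resulting exponential base is below $1$ (one cannot afford a ball of radius $\sim\sqrt d$). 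A secondary nuisance is the factor $1/(d-2n)$, which blows up as $d\downarrow 2n$ but, $d$ being an integer $>2n$, stays bounded by a constant depending on $n$, absorbed into $c_{2m,n}^{\text{(RW)}}$. All genuine $d^{-m}$\nobreakdash-decay comes from the bulk term, via $D^{\ast 2m}(\orig)\le C_m d^{-m}$.
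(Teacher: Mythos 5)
Your proof is correct. Note that the paper does not actually prove this proposition --- it quotes it from the textbook of Heydenreich and van der Hofstad and remarks that the sharpened hypothesis $d>2n$ (the book states $d>4n$) comes from a more careful analysis in Borgs--Chayes--van der Hofstad--Slade--Spencer, so there is no in-paper argument to compare against. Your two-region decomposition is precisely the kind of analysis that yields the sharp condition: the bulk contribution reduces to the return probability $D^{\ast 2m}(\orig)\le C_m d^{-m}$ (consistent with the paper's Observation~\ref{obs:J_convolutions}), while the infra-red region is handled via $1-\fconnf(k)\ge \tfrac{2}{\pi^2 d}\|k\|^2$, the containment of $\{\fconnf(k)>1-\epsilon_0\}$ in a ball of radius $O(\sqrt{\epsilon_0 d})$, and a Stirling estimate that makes the resulting volume super-exponentially small once $\epsilon_0<4/(\pi\e)$; the restriction $d>2n$ enters only through the convergence of the radial integral at the origin, exactly as it should. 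By contrast, the textbook's own route (bounding $[1-\lambda\fconnf(k)]^{-1}\le 2[1-\fconnf(k)^2]^{-1}$ and expanding in powers of $\fconnf(k)^2$) is shorter but loses a factor of two in the dimension requirement, so your argument is the stronger one and is uniform in $\lambda\in[0,1]$ and valid for real $n\ge 0$, as required. The only points worth making explicit in a final write-up are the monotonicity of $t\mapsto(1-\cos t)/t^2$ on $[0,\pi]$ behind the quadratic lower bound, and that $1/(d-2n)$ is controlled by $1/(\lfloor 2n\rfloor+1-2n)$ since $d$ ranges over integers exceeding $2n$ --- both of which you have correctly identified.
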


In~\cite{HeyHof17}, $d>4n$ is required; however, more careful analysis shows that $d>2n$ suffices (see~\cite[(2.19)]{BorChaHofSlaSpe05}). We will also need the following related result:
\begin{prop}[Related random walk bounds, \cite{HeyHof17}, Exercise 5.4] \label{thm:random_walk_triangle_related}
Let $m \in\{0,1\}$, $\lambda\in[0,1]$, and $r,n \geq 0$ such that $d>2(n+r)$. Then, uniformly in $k\in\fspace$,
	\al{ \int_{\fspace} \fconnf(l)^{2m} \fgreenslam(l)^n \tfrac 12 \big[ \fgreenslam(l+k) + \fgreenslam(l-k) \big]^{r} \frac{\dd l}{(2\pi)^d} & \leq c^{\text{(RW)}}_{2m,n+r} d^{-m}, \\
		\int_{\fspace} \fconnf(l)^{2m} \fgreenslam(l)^{n-1} \big[ \fgreenslam(l+k) \fgreenslam(l-k) \big]^{r/2} \frac{\dd l}{(2\pi)^d} & \leq c^{\text{(RW)}}_{2m,n-1+r/2} d^{-m}, }
where the constants $c^{\text{(RW)}}_{\cdot,\cdot}$ are from Proposition~\ref{thm:random_walk_triangle}.
\end{prop}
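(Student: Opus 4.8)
The plan is to reduce both estimates to one and the same mechanism: move the momentum shift $k$ onto a unimodular phase via the Fourier inversion formula, and then invoke Proposition~\ref{thm:random_walk_triangle}. The first thing I would establish is that for every real $s\ge 0$ the function $\fgreenslam(\cdot)^s=(1-\lambda\fconnf(\cdot))^{-s}$ is the Fourier transform of the \emph{nonnegative} function $h_s:=\sum_{j\ge 0}\binom{s+j-1}{j}\lambda^j D^{\ast j}$, since the generalized binomial coefficients $\binom{s+j-1}{j}$ are nonnegative for $s\ge 0$; under the hypothesis $d>2(n+r)$ the functions $h_s$ occurring below lie in $\ell^1(\Zd)$ when $\lambda<1$ (the case $\lambda=1$ following by monotone convergence), so the convolution identities used below are legitimate. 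I would also use two elementary pointwise facts: $\fgreenslam(l)\ge\tfrac12$ (because $1-\lambda\fconnf(l)\le 2$, which also gives $\fgreenslam(l)^{n-1}\le 2\fgreenslam(l)^n$), and, for $a,b\ge 0$ and $r\ge 0$, $\bigl(\tfrac12(a+b)\bigr)^r\le a^r+b^r$ (via $\tfrac12(a+b)\le\max\{a,b\}$) together with $\sqrt{ab}\le\tfrac12(a+b)$.

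\noindent\emph{First bound.} Applying $\bigl(\tfrac12(a+b)\bigr)^r\le a^r+b^r$ to $a=\fgreenslam(l+k)$, $b=\fgreenslam(l-k)$ and then substituting $l\mapsto -l$ in the term carrying $\fgreenslam(l-k)^r$ (using evenness of $\fconnf$ and $\fgreenslam$), I would bound the first integral by $2\int_{\fspace}\fconnf(l)^{2m}\fgreenslam(l)^n\fgreenslam(l+k)^r\,\dd l/(2\pi)^d$. Since $\fconnf(l)^{2m}=\widehat{D^{\ast 2m}}(l)$, $\fgreenslam(l)^n=\widehat{h_n}(l)$, and $\fgreenslam(l+k)^r=\widehat{g_k}(l)$ with $g_k(x):=\e^{\i k\cdot x}h_r(x)$, the identities $\widehat{f_1\ast f_2}=\widehat f_1\,\widehat f_2$ and $\int_{\fspace}\widehat f(l)\,\dd l/(2\pi)^d=f(\orig)$ turn this into $2\bigl(D^{\ast 2m}\ast h_n\ast g_k\bigr)(\orig)=2\sum_{x\in\Zd}(D^{\ast 2m}\ast h_n)(x)\,\e^{\i k\cdot x}\,h_r(x)$, where I also used evenness of $D^{\ast 2m}\ast h_n$ and of $h_r$. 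Every factor except the phase is nonnegative, so the modulus is at most $2\sum_{x}(D^{\ast 2m}\ast h_n)(x)h_r(x)=2\int_{\fspace}\fconnf(l)^{2m}\fgreenslam(l)^{n+r}\,\dd l/(2\pi)^d$, \emph{uniformly in $k$}. As $d>2(n+r)$ and $2m\in\{0,2\}$, Proposition~\ref{thm:random_walk_triangle} with indices $(2m,n+r)$ bounds this by $c^{\text{(RW)}}_{2m,n+r}\,d^{-m}$, the absolute factor $2$ being absorbed into the constant.

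\noindent\emph{Second bound.} Here I would use $\sqrt{ab}\le\tfrac12(a+b)$ with $a=\fgreenslam(l+k)^r$, $b=\fgreenslam(l-k)^r$ to get $[\fgreenslam(l+k)\fgreenslam(l-k)]^{r/2}\le\tfrac12[\fgreenslam(l+k)^r+\fgreenslam(l-k)^r]$, and $\fgreenslam(l)^{n-1}\le 2\fgreenslam(l)^n$ to remove the (possibly negative) exponent $n-1$. The second integrand is then dominated, up to an absolute constant, by $\fconnf(l)^{2m}\fgreenslam(l)^n[\fgreenslam(l+k)^r+\fgreenslam(l-k)^r]$, and the evenness substitution together with the Parseval computation of the previous paragraph applies verbatim, reducing again to Proposition~\ref{thm:random_walk_triangle}. (Alternatively, keeping $\fgreenslam^{n-1}$ throughout — unproblematic for $n\ge 1$, where $h_{n-1}\ge 0$, and for $0\le n<1$ after $\fgreenslam^{n-1}\le 2$ — one obtains the slightly sharper index $n-1+r/2$ recorded in the statement; in any case $d>2(n+r)$ is amply sufficient for Proposition~\ref{thm:random_walk_triangle} to apply.)

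\noindent\emph{Anticipated obstacle.} I do not expect a serious difficulty: the whole argument is the combination of two elementary inequalities (to linearise the awkward power of a sum and the square root of a product) with the Parseval transfer of the $k$-shift onto a phase that disappears upon taking absolute values. The one point needing care is that $n$ and $r$ are not assumed integral, so one cannot pass between position and momentum space for $\fgreenslam^s$ by inspection; this is exactly what the nonnegative binomial expansion of $(1-z)^{-s}$ in the first step supplies, and it is also what guarantees that the triangle inequality over the sum in $x$ produces precisely the shift-free integral governed by Proposition~\ref{thm:random_walk_triangle}. All estimates obtained are uniform in $\lambda\in[0,1]$ and in $k\in\fspace$.
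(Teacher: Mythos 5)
The paper gives no proof of Proposition~\ref{thm:random_walk_triangle_related}: it is quoted verbatim from \cite[Exercise 5.4]{HeyHof17}, so there is no in-text argument to compare yours against. Judged on its own merits, your proof is correct in substance and is essentially the intended solution to that exercise: the nonnegativity of the coefficients in $(1-z)^{-s}=\sum_j\binom{s+j-1}{j}z^j$ makes $\fgreenslam^s$ the transform of a nonnegative function, the $k$-shift becomes a unimodular phase on the lattice side, and the triangle inequality reduces everything to the shift-free integral of Proposition~\ref{thm:random_walk_triangle}. All the reductions you use ($\bigl(\tfrac12(a+b)\bigr)^r\le a^r+b^r$, AM--GM, $\fgreenslam\ge\tfrac12$, the evenness substitution $l\mapsto -l$) are valid, and the hypothesis $d>2(n+r)$ is exactly what makes the terminal application of Proposition~\ref{thm:random_walk_triangle} legitimate.

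Two small imprecisions are worth fixing. First, your bounds come with harmless multiplicative constants ($2$, or $2^{r}$ depending on whether one reads $\tfrac12[\,\cdot\,]^r$ or $[\tfrac12(\,\cdot\,)]^r$), whereas the statement asserts the bound with the constant $c^{\text{(RW)}}_{2m,n+r}$ itself; this is immaterial for every use in the paper, but if you want the clean constant for integer $r$ you should instead write $\tfrac12\bigl[\widehat h(l+k)+\widehat h(l-k)\bigr]=\sum_x\cos(k\cdot x)\e^{\i l\cdot x}h(x)$ and bound the $r$-fold convolution $\bigl(\cos(k\cdot)h_1\bigr)^{\ast r}$ pointwise by $h_r$ before summing against the nonnegative kernel $D^{\ast 2m}\ast h_n$. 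Second, the passage to $\lambda=1$ is not by monotone convergence: $\lambda\mapsto\fgreenslam(l)=(1-\lambda\fconnf(l))^{-1}$ is \emph{decreasing} in $\lambda$ at those $l$ with $\fconnf(l)<0$, so the integrand is not monotone in $\lambda$. Since the integrand is nonnegative and converges pointwise a.e.\ as $\lambda\nearrow 1$, Fatou's lemma (or dominated convergence with the majorant $2^{n+r}$ times the $\lambda=1$ integrand, using $\fgreenslam(l)\le 2\widehat G_1(l)$) gives the $\lambda=1$ case from the uniform bound at $\lambda<1$. Neither point affects the validity of the approach.
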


The following differential inequality is an application of Russo's formula and the BK inequality. It applies them to events which are not determined by a finite set of sites. We refer to the literature~\cite[Lemma 4.4]{HeyHof17} for arguments justifying this and for a more detailed proof. Observation~\ref{obs:tools:diff_inequality} will be of use in Section~\ref{sec:bootstrap_analysis}.
\begin{observation} \label{obs:tools:diff_inequality}
Let $p < p_c$. Then
	\[ \frac{\dd}{\dd p} \ftau(0) \leq \ftau(0)^2, \qquad \frac{\dd}{\dd p} \chi(p) \leq \chi(p) \ftau(0).\]
\end{observation}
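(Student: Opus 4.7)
The plan is to prove the two inequalities in sequence: first the bound on $\tfrac{\dd}{\dd p} \ftau(0)$, and then derive the bound on $\chi(p)$ as an algebraic consequence. The main tools are Russo's formula \eqref{eq:tools:russo}, the BK inequality \eqref{eq:tools:bk}, and the identity
\[
\chi(p) \;=\; 1 + p\,\ftau(0).
\]
This identity reflects the fact that $\orig$ lies in $\C(\orig)$ by default, while every other vertex of the cluster must additionally be occupied. I would establish it first: for $y\neq\orig$, the event $\{\orig \longleftrightarrow y\}$ depends only on sites different from $\orig$ and $y$ and is therefore independent of $\{y\in\omega\}$, so $\pp(y\in\omega,\orig\longleftrightarrow y) = p\,\taup(y)$. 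Summing over $y\neq\orig$ and using $\taup(\orig)=0$ gives the formula. Granted the first inequality, one then computes
\[
\tfrac{\dd}{\dd p}\chi(p) \;=\; \ftau(0) + p\,\tfrac{\dd}{\dd p}\ftau(0) \;\leq\; \ftau(0) + p\,\ftau(0)^2 \;=\; (1+p\,\ftau(0))\,\ftau(0) \;=\; \chi(p)\,\ftau(0),
\]
which is precisely the second inequality.

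For the first inequality I would work in finite volume. Set $\Lambda_n = [-n,n]^d\cap\Zd$ and let $\taup^{(n)}(x) = \pp(\orig\longleftrightarrow x\text{ using only intermediate sites in }\Lambda_n)$, which is an increasing event determined by the finite set $\Lambda_n$. Russo's formula \eqref{eq:tools:russo} gives
\[
\tfrac{\dd}{\dd p}\taup^{(n)}(x) \;=\; \sum_{y\in\Lambda_n}\pp\bigl(y\in\piv{\orig\longleftrightarrow x\text{ in }\Lambda_n}\bigr).
\]
The endpoints $\orig$ and $x$ are never pivotal, since the connection event does not depend on their occupation status. For $y\notin\{\orig,x\}$, pivotality means that every occupied path from $\orig$ to $x$ in $\omega\cup\{y\}$ must pass through $y$; picking any self-avoiding such path and splitting it at $y$ produces witness vertex sets for $\{\orig\longleftrightarrow y\}$ and $\{y\longleftrightarrow x\}$ that are disjoint (by self-avoidance) and avoid $y$ itself (as $y$ is never an intermediate vertex of either half). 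Hence
\[
\{y\in\piv{\orig\longleftrightarrow x\text{ in }\Lambda_n}\} \;\subseteq\; \{\orig\longleftrightarrow y\}\circ\{y\longleftrightarrow x\}.
\]
BK \eqref{eq:tools:bk} together with $\taup^{(n)}\leq\taup$ then yields $\tfrac{\dd}{\dd p}\taup^{(n)}(x) \leq (\taup\ast\taup)(x)$. Summing over $x\in\Zd$ gives $\tfrac{\dd}{\dd p}\sum_x \taup^{(n)}(x) \leq \ftau(0)^2$.

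The main technical obstacle is interchanging the derivative with the limit $n\to\infty$ to conclude $\tfrac{\dd}{\dd p}\ftau(0)\leq\ftau(0)^2$. Since $\taup^{(n)}(x)\uparrow\taup(x)$ monotonically and each $\taup^{(n)}$ is smooth in $p$, I would integrate the finite-$n$ inequality over a subinterval $[p_0,p]\subseteq[0,p_c)$, apply monotone convergence on both sides (the right-hand side being bounded by $\ftau(0)^2<\infty$ uniformly on compact subsets of $[0,p_c)$), and finally recover a pointwise derivative bound from the resulting integral inequality using the convexity of $p\mapsto\ftau(0)$. This is the standard route used for bond percolation, and the site-percolation adaptation is essentially identical; the reader is referred to \cite[Lemma 4.4]{HeyHof17} for the corresponding details. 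The only genuinely site-specific point is the observation that the endpoints of a connection event are not pivotal, which is what allows the clean splitting argument above.
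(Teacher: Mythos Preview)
Your proposal is correct and follows essentially the same route as the paper: Russo's formula, the inclusion $\{y\in\piv{\orig,x}\}\subseteq\{\orig\longleftrightarrow y\}\circ\{y\longleftrightarrow x\}$, the BK inequality, and the identity $\chi(p)=1+p\,\ftau(0)$ to deduce the second bound from the first. The only difference is that you spell out the finite-volume approximation needed to apply Russo's formula rigorously, whereas the paper defers this technicality to the literature (\cite[Lemma~4.4]{HeyHof17}).
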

As a proof sketch, note that
	\al{ \frac{\dd}{\dd p} \ftau(0) &= \sum_{x \in\Zd} \frac{\dd}{\dd p} \taup(x) = \sum_{x \in\Zd} \sum_{y\in\Zd} \pp\big(y \in \piv{\orig \longleftrightarrow x} \big)
			\leq \sum_{x \in\Zd} \sum_{y\in\Zd} \pp\big( \{\orig \longleftrightarrow y\}\circ\{y \longleftrightarrow x\} \big) \\
			& \leq \sum_{x \in\Zd} \sum_{y\in\Zd} \taup(y) \taup(x-y) = \ftau(0)^2.  }
The inequality for $\chi(p)$ follows from the identity $\chi(p) = 1+p \ftau(0)$.

\subsection{Definitions and preparatory statements}\label{sec:le:preparatory_statements}

We need the following definitions:
\begin{definition}[Elementary definitions] \label{def:le:elementary_definitions} Let $x,u\in\Zd$ and $A \subseteq \Zd$.
\begin{enumerate}
\item We set $\omega^x := \omega \cup \{x\}$ and $\omega^{u,x} := \omega \cup \{u,x\}$.
\item We define $\jeq(x) := \mathds 1_{\{|x|=1\}} = 2d D(x)$.
\item Let $\{\conn{u}{x}{A}\}$ be the event that $\{u \longleftrightarrow x\}$, and there is a path from $u$ to $x$, all of whose inner vertices are elements of $\omega\cap A$. Moreover, write $\{\offconn{u}{x}{A}\} :=\{\conn{u}{x}{\Zd \setminus A}\}$.
\item We define $\{u \Longleftrightarrow x\} := \{u \longleftrightarrow x\} \circ \{u \longleftrightarrow x\} $ and say that $u$ and $x$ are \emph{doubly connected}.
\item We define the modified cluster of $x$ with a designated vertex $u$ as
		\[\widetilde\C^{u}(x) := \{x\} \cup \{y \in \omega \setminus\{u\} : x \longleftrightarrow y \text{ in } \Zd \setminus\{u\}  \} . \]
\item For a set $A \subset \Zd$, define $\langle A \rangle := A \cup \{y \in \Zd: \exists x \in A \text{ s.t.~} |x-y| = 1\}$ as the set $A$ itself plus its external boundary.
\end{enumerate}
\end{definition}
Definition~\ref{def:le:elementary_definitions}.1 allows us to speak of events like $\{\conn{a}{b}{\omega^x}\}$ for $a,b\in\Zd$, which is the event that $a$ is connected to $b$ in the configuration where $x$ is fixed to be occupied. We remark that $\{\conn{x}{y}{\Zd}\} = \{x \longleftrightarrow y\} = \{\conn{x}{y}{\omega}\}$ and that $\{u \Longleftrightarrow x\} = \Omega$ for $|u-x| = 1$. Similarly, $\{u \Longleftrightarrow x\} = \varnothing$ for $u=x$.
The following, more specific definitions are important for the expansion:
\begin{definition}[Extended connection probabilities and events] \label{def:le:extended_connection_stuff} Let $v,u,x \in\Zd$ and $A \subseteq \Zd$.
\begin{enumerate}
\item Define
	\[ \{u \throughconn{A} x \} := \{u \longleftrightarrow x\} \cap \Big( \{\noffconn{u}{x}{\thinn{A}}\} \cup  \{x \in \thinn{A}  \}  \Big). \]
In words, this is the event that $u$ is connected to $x$, but either any path from $u$ to $x$ has an interior vertex in $\thinn{A}$, or $x$ itself lies in $\thinn{A}$.
\item Define
	\[ \taup^{A}(u,x) := \mathds 1_{\{x \notin \thinn{A}\}}\pp(\offconn{u}{x}{\thinn{A}}). \]
\item We introduce $\piv{u,x} := \piv{u \longleftrightarrow x}$ as the set of pivotal points for $\{u \longleftrightarrow x\}$. That is, $v \in\piv{u,x}$ if the event $\{\conn{u}{x}{\omega^v}\}$ holds but $\{\conn{u}{x}{\omega\setminus\{v\}}\}$ does not.
\item Define the events
	\al{ E'(v,u;A) &:= \{v \throughconn{A} u\} \cap \{ \nexists u' \in \piv{v,u}: v \throughconn{A} u'\}, \\
			E(v,u,x;A) &:= E'(v,u;A) \cap \{u \in \omega \cap \piv{v,x} \}.  }
\end{enumerate}
\end{definition}
First, we remark that $\{u \throughconn{\Zd} x\} = \{ u \longleftrightarrow x\}$. Secondly, note that we have the relation
	\eqq{ \taup(x-u)= \taup^{A}(u,x)+ \pp(u \throughconn{A} x).   \label{eq:le:taup_incl_excl_identity}}
We next state a partitioning lemma (whose proof is left to the reader; see~\cite[Lemma 3.5]{HeyHofLasMat19}) relating the events $E$ and $E'$ to the connection event $ \{u \throughconn{A} x \}$:
\begin{lemma} [Partitioning connection events] \label{lem:partitioning_lemma}
Let $v,x\in\Zd$ and $A \subseteq \Zd$. Then
	\[ \{v \throughconn{A} x \} = E'(v,x;A) \cup \bigcup_{u\in \Zd} E(v, u, x; A),  \]
and the appearing unions are disjoint.
\end{lemma}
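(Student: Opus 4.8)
The plan is to verify the set identity pointwise in the configuration, using the classical structure of pivotal points. Recall that on $\{v\longleftrightarrow x\}$ every element of $\piv{v,x}$ is an interior vertex of every self-avoiding occupied-interior path from $v$ to $x$ (so $\piv{v,x}$ is finite and contains neither $v$ nor $x$), and that $\piv{v,x}$ carries a natural total order $\prec$, where $u'\prec u$ means that $u'$ precedes $u$ along such paths; moreover this order is compatible with the pivotal sets, in that $\piv{v,u}=\{u'\in\piv{v,x}:u'\prec u\}$ for every $u\in\piv{v,x}$. The inclusion ``$\subseteq$'' here is immediate (restrict a $v$-$x$ path to its initial segment up to $u$), whereas the reverse inclusion together with the totality of $\prec$ is the standard pivotal-structure lemma, obtained by the usual path-surgery argument; see \cite{Gri99} or \cite[Lemma 3.5]{HeyHofLasMat19} and its ancestors. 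This is the only non-trivial input.

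Next I would record a monotonicity property of the events $\{v\throughconn{A}w\}$: if $\{v\longleftrightarrow x\}$ holds and $w$ lies in the interior of every self-avoiding occupied-interior $v$-$x$ path (for instance $w\in\piv{v,x}$), then $\{v\throughconn{A}w\}\subseteq\{v\throughconn{A}x\}$. To see this, assume $v\throughconn{A}w$ and, for contradiction, that $x\notin\thinn{A}$ and that some self-avoiding path $\gamma$ from $v$ to $x$ has all interior vertices occupied and outside $\thinn{A}$. Then $\gamma$ is one of the paths under consideration, so $w$ is one of its interior vertices, whence $w\notin\thinn{A}$; but then the initial segment of $\gamma$ up to $w$ is a self-avoiding occupied-interior $v$-$w$ path off $\thinn{A}$, so $\{v\longleftrightarrow w\}\cap\{\offconn{v}{w}{\thinn{A}}\}$ holds with $w\notin\thinn{A}$, i.e.\ $\{v\throughconn{A}w\}$ fails --- a contradiction. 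Using this, ``$\supseteq$'' is easy: $E'(v,x;A)\subseteq\{v\throughconn{A}x\}$ by definition, and since $E(v,u,x;A)=E'(v,u;A)\cap\{u\in\omega\cap\piv{v,x}\}$, on $E(v,u,x;A)$ we have $\{v\longleftrightarrow x\}$ (as $u$ is an occupied pivotal point), $v\throughconn{A}u$, and $u\in\piv{v,x}$, hence $v\throughconn{A}x$ by the monotonicity property with $w=u$.

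For ``$\subseteq$'', fix a configuration in $\{v\throughconn{A}x\}$ and put $S:=\{u\in\piv{v,x}:v\throughconn{A}u\}$. If $S=\varnothing$ then $E'(v,x;A)$ holds by definition. If $S\neq\varnothing$, let $u$ be its $\prec$-minimal element; then $v\throughconn{A}u$, and $u$ must be occupied, since otherwise $\omega\setminus\{u\}=\omega$ and pivotality of $u$ would be contradicted. By compatibility, every $u'\in\piv{v,u}$ satisfies $u'\in\piv{v,x}$ and $u'\prec u$, so $u'\notin S$ by minimality of $u$, i.e.\ $\{v\throughconn{A}u'\}$ fails; this is precisely the second clause in the definition of $E'(v,u;A)$, so $E(v,u,x;A)$ holds. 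For disjointness: on $E(v,u,x;A)$ the set $S$ contains $u$, but $S=\varnothing$ on $E'(v,x;A)$, so these are disjoint; and if some configuration lay in $E(v,u_1,x;A)\cap E(v,u_2,x;A)$ with $u_1\neq u_2$, then $u_1,u_2\in\piv{v,x}$ would be $\prec$-comparable, say $u_1\prec u_2$, so $u_1\in\piv{v,u_2}$ by compatibility; but $v\throughconn{A}u_1$ holds there, contradicting the defining clause of $E'(v,u_2;A)$.

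The crux of the argument --- and the only step that is not bookkeeping --- is the pivotal-structure lemma, and in particular checking that it is compatible with the conventions used here (connectivity with arbitrary endpoints but occupied interior, and pivotality phrased through $\omega^v$ versus $\omega\setminus\{v\}$). This is presumably why the proof is delegated to \cite{HeyHofLasMat19} and to the reader.
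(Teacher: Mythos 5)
Your proof is correct, and it is the standard argument that the paper itself omits (the lemma is stated with its proof ``left to the reader'' via the reference to \cite[Lemma 3.5]{HeyHofLasMat19}): order the pivotal points of $\{v\longleftrightarrow x\}$, take the $\prec$-first one at which $\{v\throughconn{A}\cdot\}$ occurs, and use the compatibility $\piv{v,u}=\{u'\in\piv{v,x}:u'\prec u\}$ to verify the defining clauses of $E'$ and the disjointness. You also correctly handle the site-percolation conventions (occupied-interior connectivity, pivotality via $\omega^u$ versus $\omega\setminus\{u\}$, hence occupancy of pivotal points on $\{v\longleftrightarrow x\}$), which is the only place the argument differs from its bond-percolation ancestors.
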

The next lemma, titled the Cutting-point lemma, is at the heart of the expansion:
\begin{lemma}[Cutting-point lemma] \label{lem:cutting_point_lemma}
Let $v,u,x\in\Zd$ and $A \subseteq \Zd$. Then
	\[ \pp(E(v,u,x;A)) = p \E_p \left[\mathds 1_{E'(v,u;A)} \taup^{\widetilde\C^u(v)}(u,x)  \right].\]
\end{lemma}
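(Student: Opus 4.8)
The plan is to condition on the modified cluster $\widetilde\C^u(v)$ and exploit the independence of the occupation status of $u$ from the event $E'(v,u;A)$. First I would unpack the event $E(v,u,x;A) = E'(v,u;A)\cap\{u\in\omega\cap\piv{v,x}\}$. The key structural observation is that since $u\in\piv{v,x}$, every occupied path from $v$ to $x$ passes through $u$; moreover $E'(v,u;A)$ already asserts that there is \emph{no} pivotal point $u'$ for $\{v\longleftrightarrow u\}$ with $v\throughconn{A}u'$, which pins down that the ``$A$-decorated'' part of the connection lives entirely on the $v$-to-$u$ side. So on $E(v,u,x;A)$, the configuration splits into: (i) the part of the cluster connecting $v$ to $u$ (which, together with the event $E'$, is measurable with respect to sites in $\widetilde\C^u(v)$ and its boundary, and does not involve the occupation of $u$ itself), and (ii) a connection from $u$ to $x$ that must avoid $\widetilde\C^u(v)$ — more precisely, it is the event $\{\offconn{u}{x}{\thinn{\widetilde\C^u(v)}}\}$ with $x\notin\thinn{\widetilde\C^u(v)}$, i.e.\ exactly $\taup^{\widetilde\C^u(v)}(u,x)$ once we take probabilities — plus the occupation of $u$, contributing the factor $p$.

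Concretely, I would argue as follows. For a fixed finite (or, after a standard limiting argument, arbitrary) set $S\ni v$ with $u\notin S$, consider the event $\{\widetilde\C^u(v) = S\}\cap E'(v,u;A)$. This event depends only on the occupation status of sites in $\thinn{S}\setminus\{u\}$: indeed $\widetilde\C^u(v)$ is defined via connections in $\Zd\setminus\{u\}$, so determining it (and checking $E'$, whose defining connections $v\throughconn{A}u$ and the absence of bad pivotals all run through $\widetilde\C^u(v)$) never inspects $\omega(u)$ nor sites strictly outside $\thinn S$. On this event, the further requirement $u\in\omega\cap\piv{v,x}$ becomes: $u$ is occupied, and $u$ is connected to $x$ by an occupied path none of whose inner vertices lie in $\thinn{S}$ — together with $x\notin\thinn S$ (otherwise $u$ could not be pivotal for reaching $x$ via a path disjoint from the frozen cluster, or the connection would be forced through $S$). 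That is precisely $\{u\in\omega\}\cap\{\offconn{u}{x}{\thinn S}\}$ intersected with $\{x\notin\thinn S\}$. Since $\{u\in\omega\}$ and $\{\offconn{u}{x}{\thinn S}\}$ depend only on sites in $\{u\}\cup(\Zd\setminus\thinn S)$, they are independent of $\{\widetilde\C^u(v)=S\}\cap E'(v,u;A)$, and also independent of each other. Hence
\[
\pp\big(E(v,u,x;A)\cap\{\widetilde\C^u(v)=S\}\big) = \pp\big(E'(v,u;A)\cap\{\widetilde\C^u(v)=S\}\big)\cdot p\cdot \mathds 1_{\{x\notin\thinn S\}}\pp(\offconn{u}{x}{\thinn S}).
\]
Recognizing the last two factors as $p\,\taup^{S}(u,x)$ and summing over all admissible $S$ gives
\[
\pp(E(v,u,x;A)) = p\sum_{S} \pp\big(E'(v,u;A)\cap\{\widetilde\C^u(v)=S\}\big)\,\taup^{S}(u,x) = p\,\E_p\big[\mathds 1_{E'(v,u;A)}\,\taup^{\widetilde\C^u(v)}(u,x)\big],
\]
which is the claim.

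The main obstacle I anticipate is the careful bookkeeping in the second step: verifying that on $\{\widetilde\C^u(v)=S\}\cap E'(v,u;A)$ the surviving constraint from $\{u\in\piv{v,x}\}$ is \emph{exactly} $\{u\in\omega\}\cap\{\offconn{u}{x}{\thinn S}\}\cap\{x\notin\thinn S\}$ — neither more nor less. The subtle points are (a) why pivotality of $u$ for $\{v\longleftrightarrow x\}$ does not impose extra conditions on the $v$–$u$ side beyond what $E'$ already encodes (this is the role of the clause $\nexists u'\in\piv{v,u}: v\throughconn{A}u'$ in $E'$, which guarantees the $A$-interaction is ``used up'' before reaching $u$), and (b) why the $u$–$x$ connection must genuinely avoid all of $\thinn S$ rather than just $S$ — because $\widetilde\C^u(v)$ is the cluster in $\Zd\setminus\{u\}$, any occupied site adjacent to $S$ would already belong to $S$, so a path from $u$ leaving $S$ immediately and returning would create a connection contradicting the definition of $S$ as the full modified cluster. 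Handling the case $u\in S$ (impossible, since $u\notin\widetilde\C^u(v)$ by definition) and the degenerate cases ($x=u$, or $|v-u|=1$, etc.) is routine. A measure-theoretic remark (the events are not determined by finitely many sites, so one approximates $S$ by finite truncations or invokes the standard argument referenced for Observation~\ref{obs:tools:diff_inequality}) closes the gap, exactly as in the cited companion paper~\cite{HeyHofLasMat19}.
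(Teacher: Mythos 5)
Your proposal is correct and follows essentially the same route as the paper's proof: identify $\{u\in\piv{v,x}\}$ (together with $\{v\longleftrightarrow u\}$) with $\{\offconn{u}{x}{\thinn{\widetilde\C^u(v)}}\}\cap\{x\notin\thinn{\widetilde\C^u(v)}\}$ using that sites of $\thinn{\widetilde\C^u(v)}\setminus\widetilde\C^u(v)$ other than $u$ are vacant, then factor out the occupation of $u$ and condition on $\widetilde\C^u(v)$. Your summation over the realizations $S$ is just the paper's conditional expectation written out explicitly, and you correctly flag the two subtle points (the role of the no-further-pivotal clause in $E'$, and why one must avoid $\thinn{S}$ rather than $S$).
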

\begin{proof}
The proof is a special case of the general setting of \cite{HeyHofLasMat19}. Since it is essential, we present it here. We abbreviate $\widetilde\C = \widetilde\C^u(v)$ and observe that
	\al{ \{u \in \piv{v,x} \} &= \{v \longleftrightarrow u\} \cap \{\offconn{u}{x}{\widetilde\C} \} \cap \{x \notin \thinn{\widetilde\C} \} \\
			&= \{v \longleftrightarrow u\} \cap \{\offconn{u}{x}{\thinn{\widetilde\C}} \} \cap \{ x \notin \thinn{\widetilde\C} \} .}
In the above, we can replace $\widetilde\C$ by $\thinn{\widetilde\C}$ in the middle event, as, by definition, we know that, apart from $u$, any site in $\thinn{\widetilde \C} \setminus \widetilde \C$ must be vacant. Now, since $E'(v,u;A) \subseteq \{ v \longleftrightarrow u\}$, we get
	\[ E(v,u,x;A) = E'(v,u;A) \cap \{\offconn{u}{x}{\thinn{\widetilde\C}} \} \cap \{x \notin \thinn{\widetilde\C} \}\cap \{ u\in\omega\}.\]
Taking probabilities, conditioning on $\widetilde \C$, and observing that the status of $u$ is independent of all other events, we see
	\[ \pp(E(v,u,x;A)) = p \E_p \left[ \mathds 1_{E'(v,u;A)} \mathds 1_{\{x \notin \thinn{\widetilde\C}\}} \E_p \left[ \mathds 1_{\{\offconn{u}{x}{\thinn{\widetilde\C}} \}} | \widetilde \C \right] \right],\]
making use of the fact that the first two events are measurable w.r.t.~$\widetilde\C$. The proof is complete with the observation that under $\E_p$, almost surely,
	\[  \mathds 1_{\{x \notin \thinn{\widetilde\C} \}} \E_p \left[ \mathds 1_{\{\offconn{u}{x}{\thinn{\widetilde\C}} \}} | \widetilde \C \right] =  \taup^{\widetilde\C}(u,x). \qedhere \]
\end{proof}

\subsection{Derivation of the expansion} \label{sec:le:derivation_of_the_LE}
We introduce a sequence $(\omega_i)_{i\in\N_0}$ of independent site percolation configurations. For an event $E$ taking place on $\omega_i$, we highlight this by writing $E_i$. We also stress the dependence of random variables on the particular configuration they depend on. For example, we write $\C(u; \omega_i)$ to denote the cluster of $u$ in configuration $i$.
\begin{definition}[Lace-expansion coefficients] \label{def:le:lace_expansion_coefficients}
Let $m\in\N, n\in\N_0$ and $x\in\Zd$. We define
	\al{ \Pi_p^{(0)}(x) &:= \pp(\orig \Longleftrightarrow x) - \jeq(x), \\
			\Pi_p^{(m)}(x) &:= p^m \sum_{u_0, \ldots, u_{m-1}} \pp \Big( \{\orig \Longleftrightarrow u_0\}_0 \cap \bigcap_{i=1}^{m} E'(u_{i-1},u_i; \mathscr C_{i-1})_i \Big), }
where we recall that $\jeq(x) = \mathds 1_{\{|x|=1\}}$ and moreover $u_{-1}=\orig, u_m=x$, and $\C_{i} = \widetilde\C^{u_i}(u_{i-1};\omega_i)$. Let
	\al{R_{p,n}(x) &:= (-p)^{n+1} \sum_{u_0, \ldots, u_n} \pp \Big( \{\orig \Longleftrightarrow u_0\}_0 \cap \bigcap_{i=1}^{n} E'(u_{i-1},u_i; \mathscr C_{i-1})_i
							\cap \{u_n \throughconn{\C_n} x\}_{n+1} \Big).}
Finally, set
	\[ \Pi_{p,n}(x) = \sum_{m=0}^{n} (-1)^m \Pi_p^{(m)}(x).\]
\end{definition}

It should be noted that the events $E'(u_{i-1},u_i; \mathscr C_{i-1})_i$ appearing in Definition~\ref{def:le:lace_expansion_coefficients} take place on configuration $i$ only if $\C_{i-1}$ is taken to be a fixed set---otherwise, they are events determined by configurations $i-1$ and $i$.

\begin{prop}[The lace expansion] \label{thm:lace_expansion}
Let $p< p_c$, $x\in\Zd$, and $n\in\N_0$. Then
	\[ \taup(x) = \jeq(x) + \Pi_{p,n}(x) + p \big((\jeq+\Pi_{p,n})\ast \taup) (x) + R_{p,n}(x).  \]
\end{prop}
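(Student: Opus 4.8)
The plan is to establish the expansion by induction on $n$, mirroring the classical bond-percolation argument but tracking the factors of $p$ introduced by the Cutting-point lemma. The base case $n=0$ is the crucial expansion step, and the inductive step simply iterates it.

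For the base case, I would start from the decomposition of $\{\orig \longleftrightarrow x\}$ according to whether there is a pivotal point. Writing $\{\orig \longleftrightarrow x\} = \{\orig \Longleftrightarrow x\} \,\dot\cup\, (\{\orig \longleftrightarrow x\} \setminus \{\orig \Longleftrightarrow x\})$, the doubly-connected part contributes $\pp(\orig \Longleftrightarrow x) = \jeq(x) + \Pi_p^{(0)}(x)$ by definition of $\Pi_p^{(0)}$. On the complementary event there is a well-defined \emph{first} pivotal point. Here I would invoke Lemma~\ref{lem:partitioning_lemma} with $v=\orig$ and $A=\Zd$: since $\{\orig \throughconn{\Zd} x\} = \{\orig \longleftrightarrow x\}$, the partition gives $\{\orig \longleftrightarrow x\} = E'(\orig,x;\Zd) \,\dot\cup\, \bigcup_{u} E(\orig,u,x;\Zd)$, and noting $E'(\orig,x;\Zd)$ is exactly the no-pivotal (doubly-connected) part, the pivotal part is $\bigcup_u E(\orig,u,x;\Zd)$. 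Applying the Cutting-point lemma (Lemma~\ref{lem:cutting_point_lemma}) to each term gives
\[ \pp(\orig \longleftrightarrow x) = \jeq(x) + \Pi_p^{(0)}(x) + p \sum_{u} \E_p\big[ \mathds 1_{E'(\orig,u;\Zd)} \, \taup^{\widetilde\C^u(\orig)}(u,x) \big]. \]
Now I use the inclusion-exclusion identity~\eqref{eq:le:taup_incl_excl_identity}, namely $\taup^{A}(u,x) = \taup(x-u) - \pp(u \throughconn{A} x)$, with $A = \widetilde\C^u(\orig;\omega_0)$ evaluated on an independent configuration $\omega_1$. The $\taup(x-u)$ term factorizes (independence of $\omega_1$ from $\omega_0$), yielding $p\big((\jeq + \Pi_{p,0}) \ast \taup\big)(x)$ after recognizing $\sum_u \pp(E'(\orig,u;\Zd)_0) \taup(x-u)$; wait—more precisely $\sum_u \pp(E'(\orig,u;\Zd))\,\taup(x-u)$, and since $E'(\orig,u;\Zd)_0$ summed over the doubly-connected-to-$\orig$ structure reproduces $\jeq(u) + \Pi_p^{(0)}(u) = \jeq(u)+\Pi_{p,0}(u)$, this is $p\big((\jeq+\Pi_{p,0})\ast\taup\big)(x)$. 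The remaining term, $-p\sum_u \E_p[\mathds 1_{E'(\orig,u;\Zd)_0}\,\pp(u \throughconn{\C_0} x)_1]$, is precisely $R_{p,0}(x)$ by its definition (with $n=0$: the sign is $(-p)^1$). This proves the case $n=0$.

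For the inductive step, assume the identity holds for $n-1$; it suffices to show $R_{p,n-1}(x) = (-1)^n \Pi_p^{(n)}(x) + p\big((-1)^n\Pi_p^{(n)} \ast \taup\big)(x) + R_{p,n}(x)$, since substituting this into the level-$(n-1)$ identity and collecting $\Pi_{p,n-1} + (-1)^n\Pi_p^{(n)} = \Pi_{p,n}$ gives level $n$. To see this, look at $R_{p,n-1}$: the innermost event is $\{u_{n-1} \throughconn{\C_{n-1}} x\}_{n}$. Apply Lemma~\ref{lem:partitioning_lemma} to this event (with $v = u_{n-1}$, $A = \C_{n-1}$) to split it into $E'(u_{n-1},x;\C_{n-1})_n$ plus $\bigcup_{u_n} E(u_{n-1},u_n,x;\C_{n-1})_n$. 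The $E'$ piece, summed into the nested structure, reconstitutes $(-1)^n\Pi_p^{(n)}(x)$ after absorbing the extra factor of $p$ from $(-p)^{n}$ vs $p^n$ and matching signs. To each $E$ piece apply the Cutting-point lemma, then~\eqref{eq:le:taup_incl_excl_identity} again on a fresh configuration $\omega_{n+1}$: the $\taup$-part gives $p\big((-1)^n\Pi_p^{(n)}\ast\taup\big)(x)$ and the $\pp(u_n \throughconn{\C_n} x)$-part gives $R_{p,n}(x)$.

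The main obstacle is purely bookkeeping: verifying that the nested conditional expectations genuinely factorize as claimed. The subtlety is that $E'(u_{i-1},u_i;\C_{i-1})_i$ lives on configuration $i$ but depends on the random set $\C_{i-1}$ built from configuration $i-1$; one must check that when the Cutting-point lemma peels off the occupation status of $u_i$ (contributing the factor $p$) and~\eqref{eq:le:taup_incl_excl_identity} introduces an independent configuration $i+1$, the conditioning structure is consistent — i.e., that $\taup^{\C_i}(u_i,x)$ on $\omega_{i+1}$ is independent of everything on $\omega_0,\dots,\omega_i$ except through the set $\C_i$, which is exactly what Lemma~\ref{lem:cutting_point_lemma} is designed to handle. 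I would also need a brief justification that the manipulations, a priori valid for events on finitely many sites, extend to these infinite-volume events; this is standard and I would cite~\cite[Lemma 4.4]{HeyHof17} or a finite-volume approximation as in the proof of Observation~\ref{obs:tools:diff_inequality}. The convergence of the remainder $R_{p,n} \to 0$ is not needed here — Proposition~\ref{thm:lace_expansion} is an exact identity for every $n$.
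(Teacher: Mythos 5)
Your proposal is correct and follows essentially the same route as the paper: the base case via the partitioning lemma (Lemma~\ref{lem:partitioning_lemma}) with $A=\Zd$, the Cutting-point lemma (Lemma~\ref{lem:cutting_point_lemma}), and the inclusion-exclusion identity~\eqref{eq:le:taup_incl_excl_identity}, followed by induction obtained by expanding the innermost $\{u_{n-1}\throughconn{\C_{n-1}}x\}$ event inside $R_{p,n-1}$. Your bookkeeping of the signs and factors of $p$ in the identity $R_{p,n-1}=(-1)^n\Pi_p^{(n)}+p((-1)^n\Pi_p^{(n)}\ast\taup)+R_{p,n}$ checks out, and the paper's only additional remark is that the interchanges of summation are justified because all sums are dominated by $\sum_y\taup(y)<\infty$ for $p<p_c$.
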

\begin{proof}
We have
	\[\taup(x) = \jeq(x) + \Pi_p^{(0)}(x) + \pp(\orig \longleftrightarrow x, \orig \centernot\Longleftrightarrow x).\]
We can partition the last summand via the first pivotal point. Pointing out that $\{\orig \Longleftrightarrow u\} = E'(0,u;\Zd)$, we obtain
	\al{ \pp(\orig \longleftrightarrow x, \orig \centernot\Longleftrightarrow x) 
			&= \sum_{u_0 \in\Zd} \pp(\orig \Longleftrightarrow u_0, u_0 \in \omega, u_0\in\piv{\orig, x})  = \sum_{u_0} \pp(E(\orig,u_0,x;\Zd)) \\
			&= p \sum_{u_0} \E_p\left[ \mathds 1_{\{\orig \Longleftrightarrow u_0\}_0} \cdot \taup^{\C_0} (u_0,x) \right] }
via the Cutting-point lemma~\ref{lem:cutting_point_lemma}. Using~\eqref{eq:le:taup_incl_excl_identity} for $A=\C_0$, we have
	\eqq{\taup(x) = \jeq(x) + \Pi_p^{(0)}(x) + p \sum_{u_0} \left(\jeq(u_0) + \Pi_p^{(0)}(u_0)\right) \taup(x-u_0)
												 - p \sum_{u_0} \E_p\left[ \mathds 1_{\{\orig \Longleftrightarrow u_0\}_0} \cdot
												\pp(u_0 \throughconn{\C_0} x) \right] . \label{eq:lace_expansion_step_0}}
This proves the expansion identity for $n=0$. Next, Lemma~\ref{lem:partitioning_lemma} and Lemma~\ref{lem:cutting_point_lemma} yield
	\al{\pp(u_0\throughconn{A} x) &= \pp(E'(u_0,x;A)) + \sum_{u_1\in \Zd} \pp(E(u_0,u_1,x;A)) \\
		&=  \pp(E'(u_0,x;A)) + p \sum_{u_1\in \Zd} \E_p \left[ \mathds 1_{E'(u_0,u_1;A)} \cdot \taup^{\widetilde\C^{u_1}(u_0)}(u_1,x) \right]. }
Plugging this into~\eqref{eq:lace_expansion_step_0}, we use~\eqref{eq:le:taup_incl_excl_identity} for $A=\widetilde \C^{u_1}(u)$ to extract $\Pi_p^{(1)}$ and get
	\al{ \taup(x) & = \jeq(x) + \Pi_p^{(0)}(x) - \Pi_p^{(1)}(x) + p  \big((\jeq + \Pi_p^{(0)}) \ast\taup\big)(x) \\
			& \quad + p^2 \sum_{u_1} \taup(x-u_1) \sum_{u_0} \pp \Big( \{\orig \Longleftrightarrow u_0\}_0 \cap E'(u_0,u_1;\C_0)_1 \Big) + R_{p,1}(x) \\
			&= \jeq(x) + \Pi_p^{(0)}(x) - \Pi_p^{(1)}(x) + p  \big((\jeq + \Pi_p^{(0)}-\Pi_p^{(1)}) \ast\taup\big)(x) + R_{p,1}(x) .}
Note that all appearing sums are bounded by $\sum_{y} \taup(y)$. This sum is finite for $p<p_c$, justifying the above changes in order of summation. The expansion for general $n$ is an induction on $n$ where the step is analogous to the step $n=1$ (but heavier on notation).
\end{proof}

\section{Diagrammatic bounds} \label{sec:diag_bounds}
\subsection{Setup, bounds for $n=0$} \label{sec:db:n0}
We use this section to state Lemma~\ref{lem:cosinesplitlemma} and state bounds on $\Pi_p^{(0)}$, which are rather simple to prove. The more involved bounds on $\Pi_p^{(n)}$ for $n\geq 1$ are dealt with in Section~\ref{sec:db:no_displacement}. Note that if $f(-x) = f(x)$, then $\widehat f(k) = \sum_{x \in \Zd} \cos (k \cdot x) f(x)$. We furthermore have the following tool at our disposal:
\begin{lemma}[Split of cosines, \cite{FitHof16}, Lemma 2.13] \label{lem:cosinesplitlemma}
Let $t \in \R$ and $t_i \in \R$ for $i=1, \ldots, n$ such that $t = \sum_{i=1}^{n} t_i$. Then
	\[1-\cos (t) \leq n \sum_{i=1}^{n} [1- \cos(t_i)]. \]
\end{lemma}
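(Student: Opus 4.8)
The plan is to reduce the statement to the elementary half-angle identity $1-\cos\theta = 2\sin^2(\theta/2)$ together with a triangle-type inequality for the sine function. Setting $\theta_i := t_i/2$, so that $t/2 = \sum_{i=1}^n \theta_i$, the asserted inequality $1-\cos(t)\le n\sum_{i=1}^n[1-\cos(t_i)]$ is, after dividing by $2$, equivalent to
\[ \sin^2\!\Bigl(\,\sum_{i=1}^n \theta_i\Bigr) \;\le\; n \sum_{i=1}^n \sin^2(\theta_i). \]

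First I would establish the sub-additivity bound $|\sin(a+b)| \le |\sin a| + |\sin b|$ for all $a,b\in\R$; this is immediate from the addition formula $\sin(a+b)=\sin a\cos b + \cos a\sin b$ and $|\cos|\le 1$. A trivial induction on $n$ (applying this bound to $\sum_{i=1}^{n}\theta_i = (\sum_{i=1}^{n-1}\theta_i)+\theta_n$) then gives $\bigl|\sin(\sum_{i=1}^n\theta_i)\bigr| \le \sum_{i=1}^n |\sin\theta_i|$. The second step converts this $\ell^1$-bound into an $\ell^2$-bound at the cost of a factor $n$: by the Cauchy--Schwarz inequality (equivalently the power-mean inequality), $\bigl(\sum_{i=1}^n|\sin\theta_i|\bigr)^2 \le n\sum_{i=1}^n\sin^2\theta_i$. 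Chaining the two estimates yields $\sin^2(\sum_i\theta_i)\le n\sum_i\sin^2\theta_i$, and substituting $\theta_i=t_i/2$ and multiplying by $2$ recovers the claim.

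I do not expect any genuine obstacle here: this is a routine trigonometric estimate (cited from \cite{FitHof16}), and the only point warranting a line of care is the iterated sine inequality, which is the one-line induction sketched above. An alternative would be to prove the statement directly by induction on $n$, but routing the argument through $|\sin(\cdot)|$ keeps the bookkeeping cleaner and makes transparent where the factor $n$ (which need not be optimal for our purposes) comes from.
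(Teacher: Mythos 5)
Your proof is correct and complete: the half-angle identity $1-\cos\theta=2\sin^2(\theta/2)$, the subadditivity $|\sin(\sum_i\theta_i)|\le\sum_i|\sin\theta_i|$, and Cauchy--Schwarz are exactly the standard ingredients for this estimate. The paper itself gives no proof and simply cites \cite{FitHof16}, Lemma 2.13, whose argument is essentially the one you describe, so there is nothing to add.
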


We begin by treating the coefficient for $n=0$, giving a glimpse into the nature of the bounds to follow in Sections~\ref{sec:db:no_displacement} and~\ref{sec:db:displacement}. To this end, we define the two displacement quantities
	\[\jek(x) := [1-\cos(k\cdot x)] \jeq(x) \quad \text{and} \quad \taupk(x) = [1-\cos(k\cdot x)] \taup(x). \]
\begin{prop}[Bounds for $n=0$] \label{thm:db:bounds_for_n0}
For $k\in \fspace$,
	\al{|\widehat\Pi_p^{(0)}(k)| & \leq p^2 (\jeq^{\ast 2} \ast \taup^{\ast 2})(\orig), \\
			|\widehat\Pi_p^{(0)}(0)-\widehat\Pi_p^{(0)}(k)| & \leq 2p^2 \Big((\jek\ast\jeq\ast\taup^{\ast 2})(\orig) + (\jeq^{\ast 2}\ast\taupk\ast\taup)(\orig)  \Big). }
\end{prop}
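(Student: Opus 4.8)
The plan is to start from the definition $\Pi_p^{(0)}(x) = \pp(\orig \Longleftrightarrow x) - \jeq(x)$ and bound the doubly-connected probability by a diagram. Since $\{\orig \Longleftrightarrow x\} = \{\orig \longleftrightarrow x\} \circ \{\orig \longleftrightarrow x\}$, the BK inequality~\eqref{eq:tools:bk} alone would give $\pp(\orig \Longleftrightarrow x) \leq \taup(x)^2$, but this is not summable and does not produce the right factors of $p$. Instead I would exploit that a doubly-connected configuration with $x \neq \orig$ and $|x| \neq 1$ forces \emph{two} distinct occupied neighbors of $\orig$ (say the first occupied site along each of the two disjoint paths leaving $\orig$) and, symmetrically, two distinct occupied neighbors of $x$; more carefully, one extracts two disjoint paths, and along each there is a first occupied vertex after $\orig$ and a last occupied vertex before $x$. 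Subtracting $\jeq(x)$ removes exactly the $|x|=1$ contribution, so for the remaining $x$ one gets, via a union bound over the (at most four) relevant neighbor sites together with BK applied to the disjoint sub-paths, an expression of the form $p^2 \sum_{a,b,c,d} \jeq(a)\jeq(x-b)\taup(c-a)\taup(b-c)\cdots$; collecting the factors this is precisely $p^2(\jeq^{\ast 2} \ast \taup^{\ast 2})(\orig)$ evaluated so that the convolution closes into a "triangle-like" loop through $\orig$ and $x$. Taking $|\widehat{\Pi}_p^{(0)}(k)| \leq \sum_x |\Pi_p^{(0)}(x)| = \widehat{\Pi}_p^{(0)}(0)$ (the coefficient is nonnegative once $\jeq$ is subtracted, or one bounds the absolute value term by term) yields the first inequality.

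For the second inequality I would use $|\widehat{f}(0) - \widehat{f}(k)| = |\sum_x (1-\cos(k\cdot x)) f(x)| \leq \sum_x (1-\cos(k\cdot x))|f(x)|$, so it suffices to insert the weight $[1-\cos(k\cdot x)]$ into the diagrammatic bound from the first part. Now $x$ is the "far" vertex of the loop, reached from $\orig$ by a chain of the form $\jeq \ast \jeq \ast \taup \ast \taup$ (one $\jeq$-step and one $\taup$-step on each of the two disjoint paths — though for the displacement argument one splits $k\cdot x$ along a \emph{single} path, say $k\cdot x = k\cdot a + k\cdot(c-a) + \dots$ through the four consecutive displacement vectors making up one path from $\orig$ to $x$). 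Applying the split-of-cosines Lemma~\ref{lem:cosinesplitlemma} with $n=2$ to a decomposition $k\cdot x = k\cdot x_1 + k\cdot x_2$ into the $\jeq$-part and the $\taup$-part of that path distributes the cosine weight onto either a $\jeq$ edge or a $\taup$ edge, producing the two terms $(\jek \ast \jeq \ast \taup^{\ast 2})(\orig)$ and $(\jeq^{\ast 2}\ast \taupk \ast \taup)(\orig)$, with the extra factor $2$ coming from the $n=2$ in the lemma. The factor $p^2$ is untouched.

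The main obstacle I expect is the combinatorial bookkeeping in the first step: carefully producing the two disjoint paths and identifying the first/last occupied vertices so that (a) BK genuinely applies to disjoint witness sets, (b) exactly two factors of $p$ are gained (one for an occupied neighbor of $\orig$ on one path and one for an occupied neighbor of $x$ on one path — not more, not fewer, which is the site-percolation subtlety flagged in the introduction), and (c) the resulting sum reassembles into the clean convolution $p^2(\jeq^{\ast 2}\ast\taup^{\ast 2})(\orig)$ rather than something with the wrong number of free summation variables. Once the $n=0$ diagram is correctly pinned down, inserting the cosine weight and invoking Lemma~\ref{lem:cosinesplitlemma} for the second bound is routine.
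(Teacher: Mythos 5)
Your proposal follows essentially the same route as the paper: for $|x|\le 1$ one has $\Pi_p^{(0)}(x)=0$, and for $|x|\ge 2$ the paper bounds $\Pi_p^{(0)}(x)\le \E\big[\sum_{y\ne z\in\omega}\mathds 1_{\{|y|=|z|=1\}}\mathds 1_{\{y\longleftrightarrow x\}\circ\{z\longleftrightarrow x\}}\big]\le p^2(\jeq\ast\taup)(x)^2$, sums over $x$, and for the displacement bound applies Lemma~\ref{lem:cosinesplitlemma} with $n=2$ to one of the two factors, exactly as you describe. The one simplification relative to your bookkeeping worries is that the paper takes \emph{both} occupied sites to be the first inner vertices of the two disjoint paths leaving $\orig$ (necessarily distinct by disjointness, so the $p^2$ is immediate), which yields the clean product of two arms $(\jeq\ast\taup)(x)^2$ rather than a single chain, and the closed loop $(\jeq^{\ast 2}\ast\taup^{\ast 2})(\orig)$ then follows from $\sum_x g(x)^2=(g\ast g)(\orig)$ for symmetric $g$.
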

\begin{proof}
Note that $|x| \leq 1$ implies $\Pi_p^{(0)}(x) =0$ by definition. For $|x| \geq 2$, we have
	\[\Pi_p^{(0)}(x)  \leq \E \bigg[\sum_{y \neq z \in \omega} \mathds 1_{\{|y|=|z|=1\}}\mathds 1_{\{y \longleftrightarrow x\}\circ\{z \longleftrightarrow x\}}  \bigg]
			\leq p^2 \bigg( \sum_{y\in\Zd} \jeq(y)\taup(x-y) \bigg)^2 = p^2 (\jeq\ast\taup)(x)^2.\]
Summation over $x$ gives the first bound. The last bound is obtained by applying Lemma~\ref{lem:cosinesplitlemma} to the bounds derived for $\Pi_p^{(0)}(x)$:
	\al{ |\widehat\Pi_p^{(0)}(0)&-\widehat\Pi_p^{(0)}(k)| = \sum_{x} [1-\cos(k\cdot x)] \Pi_p^{(0)}(x) \\
			& \leq 2p^2  \sum_x (J\ast\taup)(x) \sum_{y} \Big( [1-\cos(k\cdot y)] J(y) \taup(x-y) + J(y) [1-\cos(k\cdot(x-y))] \taup(x-y) \Big).}
Resolving the sums gives the claimed convolution. \end{proof}

\subsection{Bounds in terms of diagrams} \label{sec:db:no_displacement}

The main result of this section is Proposition~\ref{thm:db:main_thm}, providing bounds on the lace-expansion coefficients in terms of so-called diagrams, which are sums over products of two-point (and related) functions. To state it, we introduce some functions related to $\taup$ as well as several ``modified triangles'' closely related to $\trip$. We denote by $\delta_{x,y} = \mathds 1_{\{x=y\}}$ the Kronecker delta.

\begin{definition}[Modified two-point functions] \label{def:db:modified_two-point_functions}
Let $x\in\Zd$ and define
	\[ \taupo(x) := \delta_{\orig,x} + \taup(x), \qquad \taupf(x) = \delta_{\orig,x} + p\taup(x), \qquad \ttaup(x) = \taup(x)-\jeq(x).\]
\end{definition}
\begin{definition}[Modified triangles] \label{def:db:modified:triangles}
We let $\tripo(x) = p^2(\taupo\ast\taup\ast\taup)(x), \tripf(x) = p(\taupf\ast\taup\ast\taup)(x), \tripof(x) = p(\taupf\ast\taupo\ast\taup)(x)$, and $\tripoff(x) = (\taupf\ast\taupf\ast\taupo)(x)$. We also set
	\[\tripo = \sup_{x \in \Zd} \tripo(x), \quad \tripf = \sup_{\orig \neq x \in\Zd} \tripf(x), \quad \tripof = \sup_{\orig \neq x \in\Zd} \tripof(x), \quad \tripoff = \sup_{x\in\Zd} \tripoff(x),  \]
and $T_p := (1+\trip) \tripof + \trip \tripoff$.
\end{definition}

Definitions~\ref{def:db:modified_two-point_functions} and~\ref{def:db:modified:triangles} allow us to properly keep track of factors of $p$, which turns out to be important throughout Section~\ref{sec:diag_bounds}.

\begin{prop}[Triangle bounds on the lace-expansion coefficients] \label{thm:db:main_thm}
For $n\geq 0$, 
	\[ p\sum_{x \in \Zd} \Pi_p^{(n)}(x) \leq \tripf(\orig) \big( T_p  \big)^n.\]
\end{prop}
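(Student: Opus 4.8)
The plan is to bound $\Pi_p^{(m)}(x)$ pointwise by a product-of-diagrams expression, sum over $x$, and read off the recursive structure $\tripf(\orig)(T_p)^n$. I would first unpack the definition of $\Pi_p^{(m)}(x)$ as a sum over $u_0,\dots,u_{m-1}$ of the probability of the event $\{\orig\Longleftrightarrow u_0\}_0 \cap \bigcap_{i=1}^m E'(u_{i-1},u_i;\C_{i-1})_i$, with $u_{-1}=\orig$, $u_m=x$. The BK inequality (or rather the tree-graph-type bounds that follow from it and from unpacking the events $E'$) lets me replace each factor by a product of two-point functions. Concretely, the event $\{\orig\Longleftrightarrow u_0\}$ contributes (after BK) a factor bounded by $(\taup\ast\taup)(u_0)$, i.e.\ a ``double connection''; and each $E'(u_{i-1},u_i;\C_{i-1})$, being a connection from $u_{i-1}$ to $u_i$ that is ``interrupted'' by a previous cluster and has no pivotal point beyond which the interruption persists, contributes a bounded factor that, when the cluster-dependence is resolved via BK, looks like a product of three two-point functions forming a triangle hanging between consecutive $u_i$'s. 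The factor $p^m$ in the definition gets distributed so that, together with the extra $p$ from the leading $\{\orig\Longleftrightarrow u_0\}$ piece (recall $p\sum_x\Pi_p^{(n)}(x)$ carries one spare $p$), each ``link'' of the diagram carries the correct number of $p$'s to match a $\tripof$ or $\tripoff$ rather than a bare $\trip$; this is exactly what Definitions~\ref{def:db:modified_two-point_functions} and~\ref{def:db:modified:triangles} are engineered for, and where one has to be careful about the coincidence cases (two of the diagram points colliding), each of which loses a factor of $p$ and is absorbed into $\taupo$ or $\taupf$.

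Next I would set up the recursion. Write the pointwise bound as a convolution: $\Pi_p^{(m)}(x)$ is bounded by a convolution chain, the first block being the bound for $\Pi_p^{(0)}$-type object (giving $\tripf(\orig)$ after evaluating at the appropriate point, using Proposition~\ref{thm:db:bounds_for_n0}'s style of estimate), and each subsequent block being a ``transfer operator'' whose supremum is exactly $T_p=(1+\trip)\tripof+\trip\tripoff$. The two summands of $T_p$ arise from the two possible topologies of how the $i$-th triangle attaches to the running diagram: either it attaches at a single vertex (giving the $\tripof$ factor, with the $(1+\trip)$ accounting for whether an additional sub-triangle is present) or it shares an edge (giving $\tripoff$, multiplied by $\trip$ from the extra loop). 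Iterating $n$ times gives $\tripf(\orig)(T_p)^n$, and summing over $x$ is harmless because the final two-point function $\taup(x-u_{m-1})$ sums to $\chi(p)<\infty$ for $p<p_c$ — wait, more carefully, one arranges the bound so the $x$-summation closes a triangle rather than producing a $\chi$, keeping everything in terms of the $\trip$-family; this is the point of defining the suprema $\tripf,\tripof,\tripoff$ over $x\neq\orig$ resp.\ all $x$.

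The main obstacle I expect is \emph{not} the recursion itself but the careful bookkeeping needed to (a) translate the cluster-conditioned events $E'(u_{i-1},u_i;\C_{i-1})$ into honest products of two-point functions via repeated BK/tree-graph inequalities — one has to introduce the ``branch points'' inside each $\C_{i-1}$ where the interruption occurs and argue these branchings cost exactly the right number of two-point functions to make a triangle — and (b) account for all coincidence cases among the $O(m)$ diagram vertices, showing each such coincidence reduces the number of $p$'s by one in a way that is exactly compensated by the $\delta_{\orig,\cdot}$ terms in $\taupo,\taupf$. Getting the constant right — that is, showing the bound is $\tripf(\orig)(T_p)^n$ with no extra combinatorial factor blowing up with $n$ — requires that the decomposition of each link's contribution into the $(1+\trip)\tripof$ and $\trip\tripoff$ pieces be a genuine partition of cases, not an over-count. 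I would handle this by proving a single-step lemma (the $m=1$ case, done in full detail including all coincidences) and then invoking induction on $m$, with the inductive step being a verbatim repetition of the single-step argument applied to the last link; this mirrors the structure of the lace-expansion proof itself (Proposition~\ref{thm:lace_expansion}), where the general $n$ is ``analogous to $n=1$ but heavier on notation.''
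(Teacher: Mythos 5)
Your plan matches the paper's proof in all essentials: the paper first reduces the event defining $\Pi_p^{(n)}$ to products of simpler "$F$" events across the independent configurations (Lemma~\ref{lem:db:Pi_bounds_by_F}), applies BK with careful counting of $p$-factors and coincidence cases to obtain the $\psi$-function bound of Lemma~\ref{lem:db:pi_psi_bounds}, and then runs exactly the induction you describe, with the one-step transfer $\sup_{a\neq\orig}\sum\psi(\orig,a,\cdot)$ bounded by $T_p=(1+\trip)\tripof+\trip\tripoff$ via the case split you anticipate (the $\psi^{(1)}$ term giving $\trip\tripof$, and the collapsed/non-collapsed cases of $\psi^{(2)}$ giving $\tripof$ and $\trip\tripoff$ respectively). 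The obstacles you single out — resolving the cluster-dependence of $E'(u_{i-1},u_i;\C_{i-1})_i$ before factorizing, and absorbing coincidences into $\taupo,\taupf$ so no $p$ is lost — are precisely where the paper spends its effort.
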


The proof of Proposition~\ref{thm:db:main_thm} relies on two intermediate steps, successively giving bounds on $\sum \Pi_p^{(n)}$. These two steps are captured in Lemmas~\ref{lem:db:Pi_bounds_by_F} and~\ref{lem:db:pi_psi_bounds}, respectively. We first state the former lemma.

Recall that $\Pi_p^{(n)}$ is defined on independent percolation configurations $\omega_0, \ldots, \omega_n$. A crucial step in proving Proposition~\ref{thm:db:main_thm} is to group events taking place on the percolation configuration $i$, and then to use the independence of the different configurations. To this end, note that event $E'(u_{i-1}, u_i; \C_{i-1})_i$ takes place on configuration $i$ only if $\C_{i-1}$ is considered to be a fixed set. Otherwise, it is a product event made up of the connection events of configuration $i$ as well as a connection event in configuration $i-1$, preventing a direct use of the independence of the $\omega_i$. Resolving this issue is one of the goals of Lemma~\ref{lem:db:Pi_bounds_by_F}; another is to give bounds in terms of the simpler events (amenable to application of the BK inequality) introduced below in Definition~\ref{def:Bounding_Events}:

\begin{figure}
	\centering
  \includegraphics{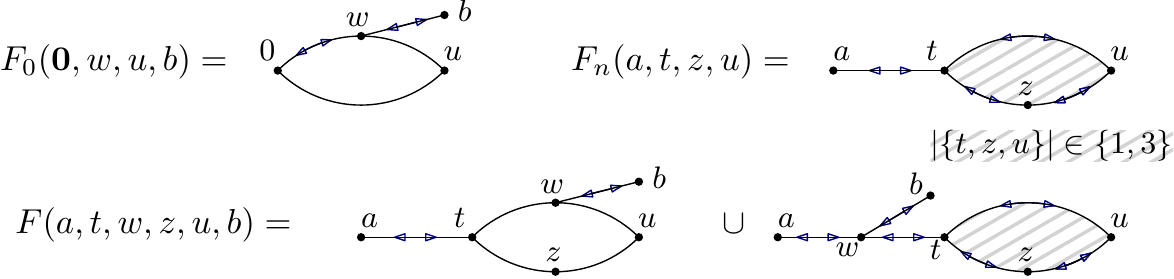}
	\caption{The $F$ events represented graphically. For lines with double arrows, we may have coincidence of the endpoints, for lines without double arrows, we do not. The area with grey tiles indicates that its three boundary points are either all distinct or collapsed into a single point. These diagrams also serve as a pictorial representation of the function $\phi_0, \phi, \phi_n$. There, lines with double arrows represent factors of $\taupo$ and lines without double arrows represent factors of $\taup$.}
	\label{fig:F_events}
\end{figure}
\begin{definition}[Bounding events] \label{def:Bounding_Events}
Let $x,y\in\Zd$. We define
	\[ \{x \oconn y \} := \{x \longleftrightarrow y\} \cup \{x=y\}. \]
Let now $i \in \{1, \ldots, n\}$ and let $a,b,t,w,z,u \in \Zd$. We define
	\al{F_0(a,w, u,b) &= \Big( \{w=a, |u-a| =1\} \cap \{ a \oconn b\}\Big) \\
		& \quad \cup \Big( \{|u-a| >1\} \cap \big(\{ a \oconn w\} \circ \{ a \longleftrightarrow u\} \circ\{ w \longleftrightarrow u\} \circ\{ w \oconn b\}\big)\Big), \\
			F_n(a, t, z, x) &= \{|\{t, z, x\}| \neq 2\} \cap \{ a \oconn t\} \\
				& \quad \circ \{ t \oconn x\} \circ\{ t \oconn z\} \circ\{ z \oconn x\}, \\
			F^{(1)} (a,t,w,z,u,b) &= \{|\{w, t, z, u\}| =4\} \cap \Big( \{ a \oconn t\} \circ\ \{ t \longleftrightarrow w\} \circ\ \{ t \longleftrightarrow z\} \\
				&\quad \circ\ \{ w \longleftrightarrow u\} \circ\ \{ z \longleftrightarrow u\} \circ\ \{ w \oconn b\}\Big), \\
			F^{(2)} (a,t,w,z,u,b) &= \big\{w \notin \{z, u\}, |\{t, z, u\}|  \neq 2 \big\} \cap \Big( \{ a \oconn w\} \circ\ \{ w \oconn b\} \\
				& \quad \circ \{w \oconn t \} \circ \{ t \oconn u\} \circ \{ t \oconn z\} \circ \{ z \oconn u\} \Big). }
\end{definition}
The coincidence requirements in $F^{(2)}$ mean that among the points $t_i, w_i, z_i, u_i$, the point $w_i$ may coincide \emph{only} with $t_i$; and additionally, the triple $\{t_i,z_i,u_i\}$ are either all distinct, or collapsed into a single point. The above events are depicted in Figure~\ref{fig:F_events}.

For intervals $[a,b]$, we use the notation $\vec x_{[a,b]} = (x_a, x_{a+1}, \ldots, x_b)$. This is not to be confused with the notation $\vec v_i$ from Definition~\ref{def:Bounding_Events}. We use the notation $(\Zd)^{(m,1)}$ to denote the set of vectors $\{\vec y_{[1,m]} \in (\Zd)^m: y_i \neq y_{i+1} \ \forall 1 \leq i < m\}$. 

\begin{lemma}[Coefficient bounds in terms of $F$ events] \label{lem:db:Pi_bounds_by_F}
For $n \geq 1$ and $(u_0, \ldots, u_{n-1}, x) \in (\Zd)^{(n+1,1)}$, 
	\al{ \{\orig \Longleftrightarrow u_0\}_0 & \cap \bigcap_{i=1}^{n} E'(u_{i-1}, u_i; \C_{i-1})_i \\
		 \subseteq \bigcup_{\substack{\vec z_{[1,n]}: z_i \in \omega_i^{u_i}, \\ w_0 \in \omega_0^\orig, t_n \in \omega_n^{u_{n-1}, x}}} & \hspace{-1cm} F_0(\orig,w_0, u_0, z_1)_0  \cap 
		 					\bigg( \bigcap_{i=1}^{n-1} \Big( \bigcup_{\substack{ t_i \in \omega_i^{u_{i-1}},\\ w_i \in \omega_i}} F^{(1)}(\vec v_i)_i \cup 
		 						\bigcup_{\substack{w_i \in \omega_i^{u_{i-1}}, \\ t_i\in \omega_i^{u_i, w_i} }} F^{(2)}(\vec v_i)_i \Big) \bigg) \\
		 & \qquad \qquad \cap F_n(u_{n-1}, t_n, z_n, x)_n, }
where $\vec v_i = (u_{i-1}, t_i,w_i,z_i,u_i, z_{i+1})$ and $u_n = x$.
\end{lemma}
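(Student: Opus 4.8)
The plan is to unfold the definition of each event $E'(u_{i-1},u_i;\C_{i-1})_i$ in Definition~\ref{def:le:extended_connection_stuff}.4 and replace it by a union of the simpler (BK-amenable) events from Definition~\ref{def:Bounding_Events}, processing the configurations one at a time. Recall that $E'(v,u;A) = \{v \throughconn{A} u\} \cap \{\nexists u' \in \piv{v,u}: v \throughconn{A} u'\}$. The first step is to understand the event $\{v \throughconn{A} u\}$ geometrically: it forces either $u \in \thinn{A}$ or an interior vertex of every path in $\thinn{A}$. Since in our application $A = \C_{i-1} = \widetilde\C^{u_{i-1}}(u_{i-2};\omega_{i-1})$, being ``in $\thinn{\C_{i-1}}$'' means being in the cluster of $u_{i-2}$ in $\omega_{i-1}$ (off $u_{i-1}$), or neighbouring it; and the cluster of $u_{i-2}$ itself arises from $E'(u_{i-2},u_{i-1};\C_{i-2})$. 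The key device for decoupling is to introduce, in each configuration $i$, a vertex (I will call it $z_{i+1}$) that witnesses the contact between the $\C_i$-dependent event of configuration $i+1$ and configuration $i$ itself: $z_{i+1}$ is chosen in $\thinn{\C_i}$ (hence connected to $u_{i-1}$ off $u_i$ in $\omega_i$, up to a boundary step absorbed by $\oconn$), so that all of configuration $i$'s contribution can be phrased purely in terms of connections within $\omega_i$ between the designated points $u_{i-1}, u_i, z_i, z_{i+1}$ (plus the internal branch point $t_i$ and the ``no-pivotal'' witness $w_i$). This is exactly what the vectors $\vec v_i = (u_{i-1}, t_i, w_i, z_i, u_i, z_{i+1})$ encode.

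The main work, then, is a configuration-by-configuration argument. For the middle configurations $1 \le i \le n-1$: on the event $E'(u_{i-1},u_i;\C_{i-1})$, I use Lemma~\ref{lem:partitioning_lemma}-type reasoning — actually just the structure of $\{u_{i-1} \throughconn{\C_{i-1}} u_i\}$ — together with the no-pivotal constraint, to produce a branch point $t_i$ (the last pivotal-like vertex, or a genuine branch point forced by the absence of pivotals) and the contact vertex $w_i$ with $\C_{i-1}$, and the contact vertex $z_{i+1}$ that will serve configuration $i+1$; then I split according to whether $w_i$ coincides with one of $t_i, z_i, u_i$ (governed by $\C_{i-1}$-membership) — the generic case gives $F^{(1)}$ with all four points distinct, and the degenerate cases are collected into $F^{(2)}$, whose coincidence pattern ($w_i$ may only coincide with $t_i$; the triple $\{t_i, z_i, u_i\}$ all-distinct-or-all-equal) is precisely the residue of these case distinctions. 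The occupation requirements $z_i \in \omega_i^{u_i}$, $t_i \in \omega_i^{u_{i-1}}$ or $\omega_i^{u_i,w_i}$, etc., record that these points are either occupied in $\omega_i$ or equal to one of the already-present endpoints. For $i=0$ the event is just $\{\orig \Longleftrightarrow u_0\}$ together with the contact vertex $z_1$ and $\C_0$-contact vertex $w_0$, yielding $F_0$; for $i=n$ there is no further configuration, so no $z_{n+1}$ is needed and the connection $\{u_{n-1} \throughconn{\C_{n-1}} x\}$ (after the last $E'$) reduces to $F_n$, where the only coincidence bookkeeping is $|\{t_n, z_n, x\}| \neq 2$.

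The hardest part will be the case analysis of $w_i$ inside the middle configurations and correctly matching the resulting coincidence constraints to those stated in $F^{(1)}$ and $F^{(2)}$ — in particular, verifying that when $w_i$ lands in $\C_{i-1}$ it must actually equal one of the designated points $t_i$, $z_i$, $u_i$ (using that $z_i$ already witnesses $\C_{i-1}$ and that the branch/pivotal structure leaves no other room), so that no new summation variable proliferates. A secondary subtlety is handling the boundary operator $\thinn{\cdot}$: a point in $\thinn{\C_{i-1}} \setminus \C_{i-1}$ is vacant except possibly $u_{i-1}$, so ``$w_i \in \thinn{\C_{i-1}}$'' either means $w_i$ is genuinely in the cluster (hence connected to $u_{i-2}$, i.e.\ to the chain) or $w_i = u_{i-1}$ or $w_i$ is one boundary step away — each of these is absorbed by writing connections with $\oconn$ rather than $\longleftrightarrow$, which is why the $F$-events are phrased with $\oconn$ on exactly those lines. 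Once these bookkeeping points are pinned down, the containment is a routine (if notation-heavy) induction on the chain length $n$, with the inductive step essentially the $i=n$-to-$i=n-1$ passage described above.
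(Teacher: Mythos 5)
First, note that the paper does not actually carry out this proof: it defers to \cite[Lemma~4.12]{HeyHofLasMat19}, so your plan is being measured against the standard argument that the referenced proof implements. Your overall architecture is the right one — process the configurations one at a time, introduce in configuration $i$ a witness $z_{i+1}\in\thinn{\C_i}$ for the contact with configuration $i+1$, use the no-pivotal clause of $E'$ to place the $\thinn{\C_{i-1}}$-contact $z_i$ in the final sausage after a branch point $t_i$, absorb the boundary operator $\thinn{\cdot}$ into $\oconn$, and induct on $n$.

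There is, however, a concrete error in the step you yourself identify as the hardest. The dichotomy between $F^{(1)}$ and $F^{(2)}$ is \emph{positional}, not a generic-versus-degenerate split. In configuration $i$, the point $w_i$ is the attachment point of the branch leading to $z_{i+1}$ (it has nothing to do with $\C_{i-1}$ — the $\thinn{\C_{i-1}}$-witness is $z_i$, so calling $w_i$ ``the contact vertex with $\C_{i-1}$'' conflates the two roles). The two events correspond to the two places this attachment can occur: $F^{(1)}$ is the case where $w_i$ lies inside the last sausage, i.e.\ on the branch of the double connection from $t_i$ to $u_i$ opposite to the one carrying $z_i$ (read off from $\{t\longleftrightarrow w\}\circ\{w\longleftrightarrow u\}$ in its definition), while $F^{(2)}$ is the case where $w_i$ lies on the initial segment from $u_{i-1}$ to $t_i$ (read off from $\{a\oconn w\}\circ\{w\oconn t\}$). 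The latter is an open, non-degenerate configuration with all of $t_i,w_i,z_i,u_i$ distinct; if you only ever generate $F^{(2)}$ as ``the degenerate cases of $F^{(1)}$,'' you will never produce it and the claimed inclusion fails. The coincidence constraints ($|\{w,t,z,u\}|=4$ in $F^{(1)}$; $w\notin\{z,u\}$ and $|\{t,z,u\}|\neq 2$ in $F^{(2)}$) are a secondary layer of bookkeeping: degenerate sub-cases of the $F^{(1)}$-type picture are re-expressed as $F^{(2)}$-type events via the $\oconn$ relations, which is why $F^{(2)}$ permits $w=t$ and the collapse of $\{t,z,u\}$. With the dichotomy corrected, the rest of your plan (induction, $\oconn$ absorption of $\thinn{\C_{i-1}}\setminus\C_{i-1}$, the occupation bookkeeping in the unions) goes through as in the referenced proof.
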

The proof is analogous to the one in~\cite[Lemma~4.12]{HeyHofLasMat19} and we do not perform it here. The second important lemma is Lemma~\ref{lem:db:pi_psi_bounds}, and its bounds are phrased in terms of the following functions:

\begin{definition}[The $\psi$ and $\phi$ functions] \label{def:db:psi_phi_fcts}
Let $n \geq 1$ and $a_1,a_2,b,w,t,u,z \in \Zd$. We define 

\al{ \psi_0(b,w,u) & := \delta_{b,w} p \jeq (u-b) + p \taupf(w-b) \ttaup(u-b) \taup(w-u), \\
		 \tilde\psi_0(b,w,u) &:= p \taupf(w-b) \taup(u-b) \taup(w-u), \\
		\psi_n(a_1,a_2,t,z,x) &:=  \mathds 1_{\{|\{t,z,x\}| \neq 2\}} \taupo(z-a_1) \taupf(t-a_2)\taupf(z-t)\taupf(x-t)\taupo(x-z). }
Moreover, we define
	\al{ \psi^{(1)}(a_1,a_2,t,w,z,u) &:= p^3 \mathds 1_{\{|\{t,w,z,u\}| =4\}} \taupo(z-a_1) \taupf(t-a_2) \taup(w-t) \taup(z-t)\taup(u-w)\taup(u-z),  \\
		 \psi^{(2)}(a_1,a_2,t,w,z,u) &:= \mathds 1_{\{w \notin \{z,u\}, |\{t,z,u\}|  \neq 2 \}} \taupo(z-a_1) \taupf(w-a_2) \taupf(t-w) \taupf(z-t)\taupf(u-t) \taupo(u-z),}
and $\psi := \psi^{(1)} + \psi^{(2)}$. Furthermore, for $j \in\{1,2\}$, let 
	\al{ \phi_0(b,w,u,z) & := \delta_{b,w} p \jeq (u-b)  \taupo(z-w) + p \taupf(w-b) \ttaup(u-b) \taup(w-u) \taupo(z-w), \\
		 \phi_n(a_2,t,z,x) & := \mathds 1_{\{|\{t,z,x\}| \neq 2\}} \taupf(t-a_2)\taupf(z-t)\taupf(x-t)\taupo(x-z), \\
		 \phi^{(j)}(a_2,t,w,z,u,b) &:= \frac{\taupo(b-w)}{\taupo(z)} \psi^{(j)}(\orig,a_2,t,w,z,u),  }
and $\tilde\phi_0(b,w,u,z) := \tilde\psi_0(b,w,u) \taupo(z-w)$ as well as $\phi:= \phi^{(1)} + \phi^{(2)}$.
\end{definition}

We remark that $\psi_0 \leq \tilde\psi_0$ as well as $\phi_0 \leq \tilde\phi_0$, and we are going to use this fact later on. In the definition of $\phi^{(j)}$, the factor $\taupo(z)$ cancels out. In that sense, $\phi^{(j)}$ is obtained from $\psi^{(j)}$ by ``replacing'' the factor $\taupo(z-a_1)$ by the factor $\taupo(b-w)$, and the two functions are closely related.

We first obtain a bound on $\Pi_p^{(n)}$ in terms of the $F$ events (this is Lemma~\ref{lem:db:Pi_bounds_by_F}). Bounding those with the BK inequality, we will naturally observe the $\phi$ functions (Lemma~\ref{lem:db:pi_psi_bounds}). To decompose them further, we would like to apply induction; for this purpose, the $\psi$ functions are much better-suited. By introducing both the $\phi$ and $\psi$ functions, we increase the readability throughout this section (and later ones).

\begin{definition}[The $\Psi$ function] \label{def:db:Psi_fct}
Let $w_n, u_n \in \Zd$ and define
\[ \Psi^{(n)}(w_n,u_n) := \sum_{\substack{\vec t_{[1,n]}, \vec w_{[0,n-1]}, \vec z_{[1,n]}, \vec u_{[0,n-1]}: \\ u_{n-1} \neq u_n}} 
				\psi_0(\orig,w_0,u_0) \prod_{i=1}^{n} \psi(w_{i-1},u_{i-1},t_i,w_i,z_i,u_i),\]
where $\vec t_{[1,n]}, \vec z_{[1,n]}, \vec w_{[0,n-1]} \in (\Zd)^n$ and $\vec u_{[0,n-1]} \in (\Zd)^{(n,1)}$.
\end{definition}

We remark that $\Psi^{(n)}(x,x)=0$, resulting from the fact that the $\phi$ functions in Definition~\ref{def:db:psi_phi_fcts} output $0$ for $w=u$. We are going to make use of this later.

\begin{lemma}[Bound in terms of $\psi$ functions] \label{lem:db:pi_psi_bounds}
For $n\geq 0$,
\eqq{ p \sum_{x\in\Zd} \Pi_p^{(n)}(x) \leq \sum_{w,u,t,z,x \in \Zd} \Psi^{(n-1)}(w,u) \psi_n(w,u,t,z,x) \leq \sum_{w,u \in \Zd} \Psi^{(n)} (w,u). \label{eq:db:Pi_bound_by_Psi} }
\end{lemma}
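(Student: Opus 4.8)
The plan is to prove Lemma~\ref{lem:db:pi_psi_bounds} by first applying the event inclusion of Lemma~\ref{lem:db:Pi_bounds_by_F}, then bounding each appearing $F$-event by the BK inequality, and finally reorganizing the resulting sum so that it matches the definition of $\Psi^{(n)}$. The second inequality in~\eqref{eq:db:Pi_bound_by_Psi} is essentially bookkeeping: it says that summing $\psi_n(w,u,t,z,x)$ over $t,z,x$ and attaching it to $\Psi^{(n-1)}$ reproduces exactly one more factor of $\psi$ in Definition~\ref{def:db:Psi_fct} (the last factor, with indices renamed), so I would dispatch it at the end by a direct comparison of the two sums, noting the $|\{t,z,x\}|\neq 2$ indicators match and that $\taupo(z-a_1)\taupf(t-a_2)\taupf(z-t)\taupf(x-t)\taupo(x-z)$ coincides with the definition of $\psi_n$. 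The substantive content is therefore the first inequality.

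First I would handle the easy base case $n=0$ directly from Proposition~\ref{thm:db:bounds_for_n0} (or from the definition of $\Pi_p^{(0)}$): one checks that $p\sum_x\Pi_p^{(0)}(x)$ is bounded by $\sum_{w,u,t,z,x}\psi_0$-type terms, matching $\sum\psi_0(\orig,w,u)\psi_0(\ldots)$ — here $\Psi^{(-1)}$ is interpreted as a point mass so the first sum in~\eqref{eq:db:Pi_bound_by_Psi} reads $\sum\psi_0$. For $n\geq 1$, I start from Definition~\ref{def:le:lace_expansion_coefficients}, $p\sum_x\Pi_p^{(n)}(x) = p^{m+1}\sum_{u_0,\ldots,u_{m-1},x}\pp(\cdot)$ with $m=n$, restrict the outer sum to $(u_0,\ldots,u_{n-1},x)\in(\Zd)^{(n+1,1)}$ using $\Pi_p^{(n)}(x)=0$ unless consecutive $u_i$ differ (this is where the convention $\widetilde\C^{u}(v)$ and double connections force $u_{i-1}\neq u_i$), and apply Lemma~\ref{lem:db:Pi_bounds_by_F}. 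Because the right-hand side there is a union over auxiliary points $\vec z, \vec w, \vec t$ that are constrained to lie in the various $\omega_i^{\cdot}$, taking probabilities and using independence of the configurations $\omega_0,\ldots,\omega_n$ factorizes the probability into a product over $i$ of probabilities of single-configuration events; the membership constraints $z_i\in\omega_i^{u_i}$ etc.\ each contribute a factor $p$ upon taking expectation (or a Kronecker delta when the point is forced to equal a deterministic vertex), which is the source of the $p$-powers $p^3$ in $\psi^{(1)}$ and the $p$ in $\psi_0$. Then the BK inequality~\eqref{eq:tools:bk} applied to each $F$-event (each of which is a $\circ$ of increasing connection events, possibly intersected with a coincidence-pattern indicator) replaces each $\{a\oconn b\}$ by $\taupo(b-a)$ and each $\{a\longleftrightarrow b\}$ by $\taup(b-a)$, turning $F_0\mapsto$ something bounded by (a sum over $w_0$ of) $\psi_0$-like terms with the extra $\taupo(z_1-w_0)$, i.e.\ $\phi_0$; $F^{(1)},F^{(2)}\mapsto \phi^{(1)},\phi^{(2)}$; and $F_n\mapsto \phi_n$. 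At this point one has $p\sum_x\Pi_p^{(n)}(x)\le\sum\phi_0\prod_{i=1}^{n-1}\phi(\ldots)\,\phi_n(\ldots)$ summed over all interior points.

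The final step is to convert the product of $\phi$'s into the product of $\psi$'s defining $\Psi^{(n-1)}$ composed with $\psi_n$. This uses the relation $\phi^{(j)}(a_2,t,w,z,u,b)=\dfrac{\taupo(b-w)}{\taupo(z)}\psi^{(j)}(\orig,a_2,t,w,z,u)$ and the analogous $\phi_0\le\tilde\phi_0 = \tilde\psi_0(b,w,u)\taupo(z-w)$, $\phi_0$'s own identity, and $\phi_n = \psi_n$ with the leading $\taupo$ stripped: when one multiplies consecutive factors $\phi_0(\orig,w_0,u_0,z_1)\,\phi^{(j)}(u_0,t_1,w_1,z_1,u_1,w_0)\cdots$, the ``displacement'' argument of the $\taupo$ in the numerator of $\phi^{(j)}$ at step $i$, namely $\taupo(w_{i-1}-w_i)$ — wait, rather the factor $\taupo(z_i-a_1)$ inside $\psi^{(j)}$ at step $i+1$ telescopes against the $\taupo(b-w)/\taupo(z)$ bookkeeping at step $i$, so that the product collapses to exactly $\psi_0(\orig,w_0,u_0)\prod_{i=1}^{n-1}\psi(w_{i-1},u_{i-1},t_i,w_i,z_i,u_i)$ times $\psi_n(w_{n-1},u_{n-1},t_n,z_n,x)$. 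I expect this telescoping identity — verifying that the ratios $\taupo(\cdot)/\taupo(\cdot)$ introduced to pass from $\psi$ to $\phi$ exactly cancel along the chain, and that the first and last factors ($\phi_0$ vs.\ $\psi_0$, $\phi_n$ vs.\ $\psi_n$) absorb the boundary terms correctly — to be the main obstacle and the only place needing genuine care; everything else is BK plus independence plus relabeling indices. Renaming $(w_{n-1},u_{n-1})\to(w,u)$ then gives precisely $\sum_{w,u,t,z,x}\Psi^{(n-1)}(w,u)\psi_n(w,u,t,z,x)$, and the second inequality follows by recognizing $\sum_{t,z,x}\psi_n(w,u,t,z,x)\le \sum_{u'}\psi(\ldots)$ summed appropriately, i.e.\ that appending $\psi_n$ is dominated by appending one more $\psi$ factor, completing the proof.
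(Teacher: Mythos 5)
Your proposal is correct and follows essentially the same route as the paper: restrict to consecutive distinct $u_i$'s, apply Lemma~\ref{lem:db:Pi_bounds_by_F}, factorize over the independent configurations while counting factors of $p$ according to point coincidences, bound the $F$ events via BK to obtain the $\phi$ functions, convert $\phi$'s to $\psi$'s through the exact cancellation of the $\taupo(b-w)/\taupo(z)$ factors along the chain, and close the second inequality with $\psi_n(a_1,a_2,t,z,u)\leq\psi(a_1,a_2,t,a_2,z,u)$. The brief confusion about which $\taupo$ factor telescopes is self-corrected and does not affect the argument.
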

\begin{proof}
\col{Note first that according to Definition~\ref{def:le:lace_expansion_coefficients}, the function $\Pi_p^{(n)}$ contains the event $E'(u_{i-1}, u_i; \C_{i-1})$, which is itself contained in $\{u_{i-1} \longleftrightarrow u_i\}$. This is why we may assume that the points $u_0, \ldots, u_n$ in the definition of $\Pi_p^{(n)}$ satisfy $u_{i-1} \neq u_i$.} Together with Lemma~\ref{lem:db:Pi_bounds_by_F}, this yields a bound on $\Pi_p^{(n)}$ of the form 
	\algn{p\Pi_p^{(n)}(u_n) &\leq p^{n+1} \sum_{\vec u} \pp \bigg( \bigcup_{w_0, t_n, \vec z} F_0(\orig,w_0, u_0, z_1)_0  \cap 
									\bigcap_{i=1}^{n-1} \Big( \bigcup_{t_i, w_i} F^{(1)}(\vec v_i)_i \cup \bigcup_{t_i, w_i} F^{(2)}(\vec v_i)_i\Big) \notag\\
			& \qquad \qquad \qquad \qquad \cap F_n(u_{n-1}, t_n, z_n, x)_n \bigg) \notag\\
			& \leq \sum_{\vec t, \vec w, \vec z, \vec u} p^{|\{\orig, w_0\}|} \pp \big( F_0(\orig,w_0, u_0, z_1)\big) \notag\\ 
			& \qquad \times \prod_{i=1}^{n-1} \Big(  p^{|\{u_{i-1}, t_i\}| + 2} \pp\big( F^{(1)}(\vec v_i)\big) + p^{|\{u_{i-1},w_i\}|+|\{w_i, t_i\}| 
						+ |\{t_i,z_i,u_i\}|-3} \pp\big( F^{(2)}(\vec v_i)\big) \Big) \notag\\
			& \qquad \times p^{|\{u_{n-1},t_n\}| + |\{t_n, z_n,u_n\}|-2 }  \pp\big(F_n(u_{n-1}, t_n, z_n, u_n) \big), \label{eq:db:Pi_bound_by_F}}
where we recall that $\vec v_i = (u_{i-1}, t_i,w_i,z_i,u_i, z_{i+1})$. In the first line, $\vec t, \vec w,\vec z$ are occupied points as in Lemma~\ref{lem:db:Pi_bounds_by_F}. In both lines, $\vec u_{[0,n]} \in (\Zd)^{(n+1,1)}$, and in the second line, $\vec t, \vec w, \vec z \in (\Zd)^n$. Crucially, in the second inequality of~\eqref{eq:db:Pi_bound_by_F}, the factorization occurs due to the independence of the different percolation configurations. Moreover, it is crucial here that the number of factors of $p$ (appearing when we switch from a sum over points in $\omega$ to a sum over points in $\Zd$) depends on the number of coinciding points.

We can now decompose the $F$ events by heavy use of the BK inequality, producing bounds in terms of the $\phi$ functions introduced in Definition~\ref{def:db:psi_phi_fcts}. We start by bounding
	\al{ p^{|\{a, w\}|} \pp \big(F_0(a,w,u,z)\big) &\leq  \phi_0(a,w,u,z), \\
		 p^{|\{a,t\}|+|\{t,z,u\}|-2} \pp\big(F_n(a,t,z,x) \big) & \leq \phi_n(a,t,z,x). }
We continue to bound
	\[ p^{|\{a,t\}| + 2}  \pp\big( F^{(1)}(a,t,w,z,u,b)\big) + p^{|\{a,w\}|+|\{w,t\}| + |\{t,z,u\}|-3} \pp\big( F^{(2)}(a,t,w,z,u,b)\big) \Big) \leq \phi(a,t,w,z,u,b). \]
Plugging these bounds into~\eqref{eq:db:Pi_bound_by_F}, we obtain the new bound
	\eqq{ p \Pi_p^{(n)}(u_n) \leq \sum_{(\vec t, \vec z)_{[1,n]}, (\vec w, \vec u)_{[0,n-1]}} \hspace{-1cm}\phi_0(\orig,w_0,u_0,z_1) \phi_n(u_{n-1},t_n,z_n,x) 
								\prod_{i=1}^{n-1} \phi (u_{i-1},t_i,w_i,z_i,u_i,z_{i+1}) \label{eq:db:Pi_bound_by_phi},}
where $\vec t_{[1,n]}, \vec w_{[0,n-1]}, \vec z_{[1,n]} \in (\Zd)^n$, and $\vec u_{[0,n]}\in(\Zd)^{(n+1,1)}$. We rewrite the right-hand side of~\eqref{eq:db:Pi_bound_by_phi} by replacing the $\phi_0, \phi_n$ and $\phi$ functions by $\psi_0, \psi_n$ and $\psi$ functions. As the additional factors arising from this replacement exactly cancel out, this gives the first bound in Lemma~\ref{lem:db:pi_psi_bounds}. The observation 
	\eqq{ \psi_n(a_1,a_2,t,z,u) \leq \psi(a_1,a_2,t,a_2,z,u) \label{eq:db:Psi_last_segment_bound}}
gives the second bound and finishes the proof.
\end{proof}

We can now prove Proposition~\ref{thm:db:main_thm}:

\begin{proof}[Proof of Proposition~\ref{thm:db:main_thm}] We show that
	\eqq{ \sum_{w,u \in \Zd} \Psi^{(n)} (w,u) \leq  \tripf(\orig) \big( T_p  \big)^n, \label{eq:db:Psi_triangles_bound}}
which is sufficient due to~\eqref{eq:db:Pi_bound_by_Psi}. The proof of~\eqref{eq:db:Psi_triangles_bound} is by induction on $n$. For the base case, we bound
	\[ \col{\sum_{w,u \in \Zd} \Psi^{(0)} (w,u)=}\sum_{w,u} \psi_0(\orig,w,u) \leq \sum_{w,u} \tilde\psi_0(\orig,w,u) = p \sum_{w,u} \taupf(w) \taup(u) \taup(u-w) = \tripf(\orig) . \]
Let now $n \geq 1$. Then
	\begin{align} \sum_{w,u} \Psi^{(n)}(w,u) & = \sum_{w',u'} \Psi^{(n-1)}(w',u') \sum_{z,t,w,u\neq u'}  \psi(w',u',t,w,z,u) \notag \\
		& \leq \Big( \sum_{w',u'} \Psi^{(n-1)} (w',u') \Big) \Big( \sup_{w' \neq u'} \sum_{z,t,w,u\neq u'}  \psi(w',u',t,w,z,u) \Big). \label{eq:db:inductive_step} \end{align}
The fact that $\Psi^{(n)}(x,x) = 0$ for any $n$ and $x$ allows us to assume $w' \neq u'$ in the supremum in the second line of~\eqref{eq:db:inductive_step}. After applying the induction hypothesis, it remains to bound the second factor for $w' \neq u'$, which we rewrite as $\sup_{a \neq \orig} \sum_{t,w,z,u \neq a} \psi(\orig,a,t,w,z,u)$ by translation invariance. As it is a sum of two terms (originating from $\psi^{(1)}$ and $\psi^{(2)}$), we start with the first one and obtain
\begin{figure}
	\centering
  \includegraphics{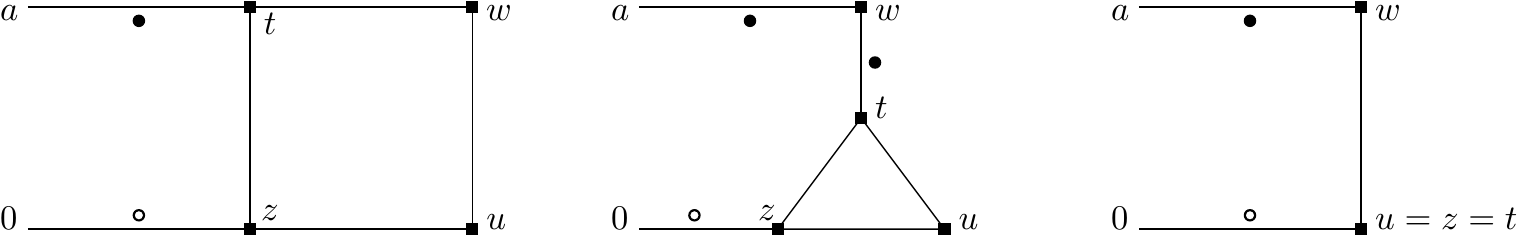}
	\caption{The pictorial representation of $\sup_{\orig \neq a}\sum_{t,w,z,u \neq a} \psi(\orig,a,t,w,z,u)$. The last two pictures represent the case distinction captured in the indicator. Points that are summed over are marked with a square.}
	\label{fig:Psi_diagrams}
\end{figure}
	\algn{ p^3  & \sum_{t,z} \Big( \taupf(t-a) \taup(z-t) \taupo(z) \Big( \sum_{u,w} \taup(w-t) \taup(u-w) \taup(z-u) \Big)\Big) \notag \\
		&\leq p \sum_{t,z} \Big( \taupf(t-a) \taup(z-t) \taupo(z) \Big(\sup_{t,z} p^2 \sum_{u,w} \taup(w-t) \taup(u-w) \taup(z-u) \Big)\Big) \leq \trip \tripof(a). \label{eq:db:psi_1_bound}}
Before treating the second term, we show how to obtain the bound from~\eqref{eq:db:psi_1_bound} pictorially, using diagrams very similar to the ones introduced in Figure~\ref{fig:Psi_diagrams}. In particular, factors of $\taup$ are represented by lines, factors of $\taupf$ and $\taupo$ by lines with an added `$\bullet$' or `$\circ$', respectively. Points summed over are represented by squares, other points (which we mostly take the supremum over, for example point $a$) are represented by colored disks. Hence, we interpret the factor $\taup(z-t)$ as a line between $t$ and $z$. Since both endpoints are summed over, we display them as squares (and without labels $t$ or $z$). We interpret the factor $\taupo(z)$ as a ($\circ$-decorated) line between $\orig$ and $z$; the origin is represented by lack of decorating the incident line. Finally, we indicate the distinctness of a pair of points (in our case $\orig \neq a$) by a disrupted two-headed arrow $\mathrel{\raisebox{-0.25 cm}{\includegraphics{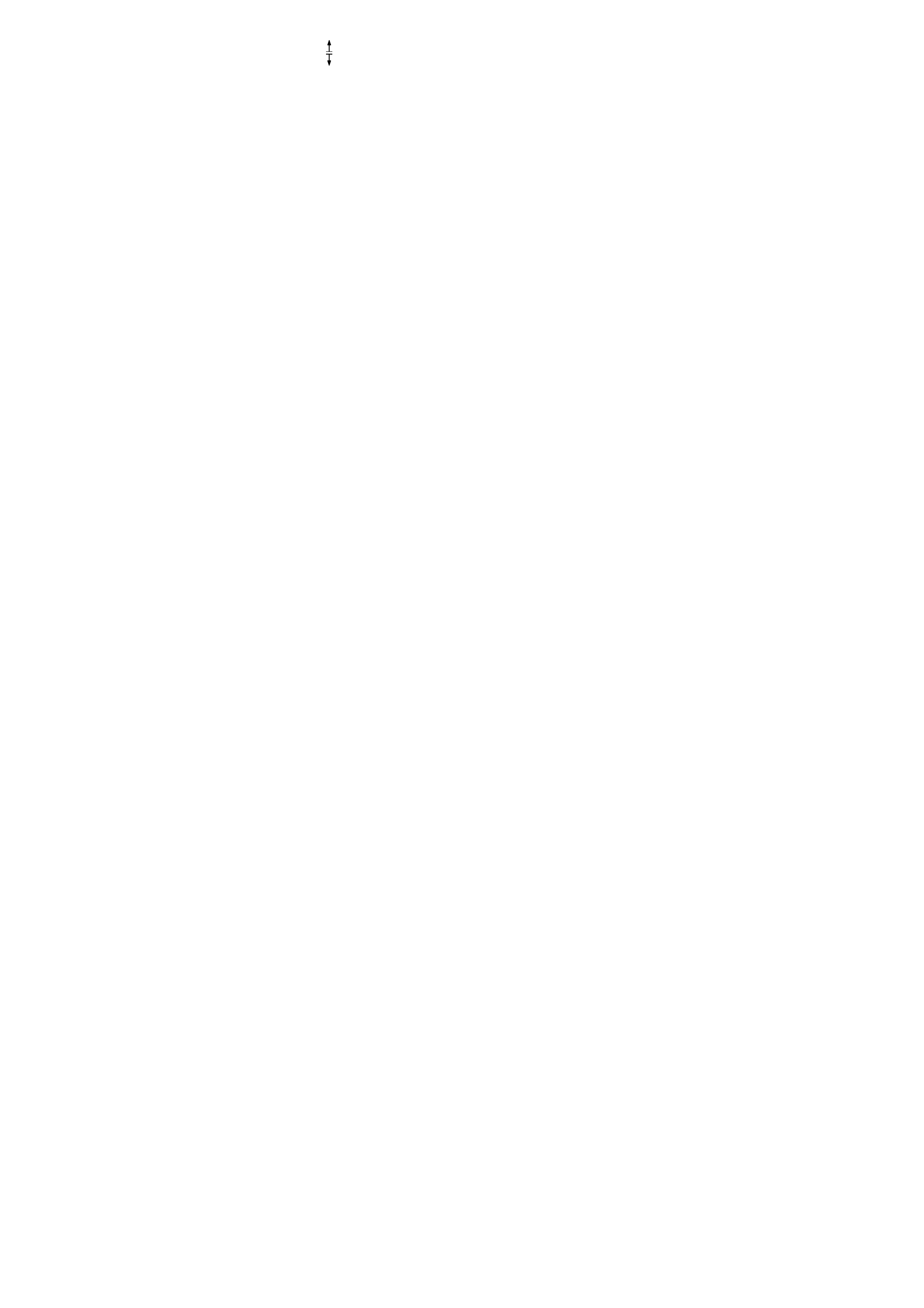}}}$. With this notation,~\eqref{eq:db:psi_1_bound} becomes
	\[ p^3 \sum \mathrel{\raisebox{-0.25 cm}{\includegraphics{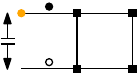}}}
			\ \leq p \sum \Big( \mathrel{\raisebox{-0.25 cm}{\includegraphics{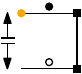}}}
				\Big(  \sup_{\textcolor{blue}{\bullet}, \textcolor{green}{\bullet}} p^2 \sum \mathrel{\raisebox{-0.25 cm}{\includegraphics{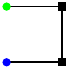}}} \Big) \Big)
			\leq \tripof \trip.\]

The second term in $\psi$, originating from $\psi^{(2)}$, contains an indicator. Resolving it splits this term into two further terms. We first consider the term arising from $|\{t,z,u\}| = 1$, which forces $w \neq t=u=z$, and the term is of the form
	\[ p \sum_{u,w} \taupo(u) \taup(w-u) \taupf(a-w) = p \sum \mathrel{\raisebox{-0.25 cm}{\includegraphics{Psi_1__decomp_1.pdf}}} \ = \tripof(a) .\]
Turning to the term due to $|\{t,z,u\}| = 3$, with a substitution of the form $y'=y-u$ for $y\in \{t,w,z\}$ in the second line, we see that
	\algn{& p^2 \sum_{t,w,z,u} \taupf(w-a) \taupo(z) \taupf(t-w) \taup(z-t)\taup(u-t) \taup(z-u) \notag \\
		=& p^2 \sum_{t', z'} \Big( \taup(z') \taup(t'-z') \taup(t')  \Big( \sum_{u,w'} \taupo(z'+u) \taupf(a-w'-u) \taupf(w-t) \Big)\Big) \notag \\
		\leq & \trip \tripoff. \label{eq:db:psi_2_bound}}
This concludes the proof. However, we also want to show how to execute the bound in~\eqref{eq:db:psi_2_bound} using diagrams. To do so, we need to represent a substitution in pictorial form. Note that after the substitution, the sum over point $u$ is w.r.t.~two factors, namely $\taupo(z'+u) \taupf(a-w'-u)$. We interpret these two factors as a line between $-u$ and $z'$ and a line between $-(u-a)$ and $w'$. In this sense, the two lines do not meet in $u$, but they have endpoints that are a constant vector $a$ apart. We represent this as
	\[ \sum_u \taupo(\textcolor{altviolet}{ z'}+u) \taupf(\textcolor{darkorange}{a}-\textcolor{blue}{w'}-u)  = \sum \mathrel{\raisebox{-0.25 cm}{\includegraphics{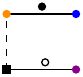}}}. \]
The bound in~\eqref{eq:db:psi_2_bound} thus becomes
	\[ p^2 \sum \mathrel{\raisebox{-0.25 cm}{\includegraphics{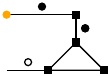}}}
			\ = p^2 \sum \mathrel{\raisebox{-0.25 cm}{\includegraphics{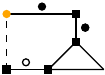}}}
			\ \leq p^2 \sum \Big( \Big( \sup_{\textcolor{blue}{\bullet}, \textcolor{green}{\bullet}} \sum
				 \mathrel{\raisebox{-0.25 cm}{\includegraphics{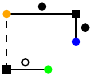}}} \Big)
				 \mathrel{\raisebox{-0.25 cm}{\includegraphics{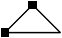}}} \Big)
			\leq \tripoff \trip,\]
where we point out that we did not use $a \neq \orig$ for the bound $\tripoff$, and so it was not indicated in the diagram.
\end{proof}

The following corollary will be needed later to show that the limit $\Pi_{p,n}$ for $n \to\infty$ exists:

\begin{corollary} \label{cor:db:Pi_pn_unsummed_bound}
For $n \geq 1$,
	\[ \sup_{x\in\Zd} \Pi_p^{(n)}(x) \leq \tripf(\orig) (1+\tripof) \big( T_p \big)^{n-1}.  \]
\end{corollary}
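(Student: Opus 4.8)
\textbf{Plan for the proof of Corollary~\ref{cor:db:Pi_pn_unsummed_bound}.} The idea is to rerun the machinery behind Proposition~\ref{thm:db:main_thm}, but without performing the final summation over $x = u_n$, so that one less factor of $\taup$ gets summed out and one picks up a supremum instead. Concretely, starting from~\eqref{eq:db:Pi_bound_by_phi} (equivalently, from the $\psi$-version used in Lemma~\ref{lem:db:pi_psi_bounds}), I would write
\[ p\,\Pi_p^{(n)}(x) \leq \sum_{w,u \in \Zd} \Psi^{(n-1)}(w,u) \, \psi_n(w,u,t,z,x), \]
summed also over $t,z$, which is exactly the first bound in~\eqref{eq:db:Pi_bound_by_Psi} \emph{before} the final sum over $x$ is taken. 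Then I would factorize: bound the $\Psi^{(n-1)}$-part by $\sum_{w,u} \Psi^{(n-1)}(w,u) \le \tripf(\orig) (T_p)^{n-1}$ using~\eqref{eq:db:Psi_triangles_bound}, and bound the remaining factor $\sup_{w \neq u, x} \sum_{t,z} \psi_n(w,u,t,z,x)$ by a modified triangle. This last supremum, after translating so that $w=\orig$, is $\sup_{a\neq\orig,\,x}\sum_{t,z}\mathds 1_{\{|\{t,z,x\}|\neq 2\}}\taupo(z)\taupf(t-a)\taupf(z-t)\taupf(x-t)\taupo(x-z)$; dropping the indicator and the distinctness constraint and summing over $t$ and $z$ gives a convolution of the form $(\taupf\ast\taupf\ast\taupo)$ evaluated at a shifted argument, i.e.\ it is bounded by $\tripof + \tripoff \le 1+\tripof$ (the precise grouping: $\sum_t \taupf(t-a)\taupf(x-t)$ convolved appropriately, with $\taupo(z)\taupo(x-z)$ summed over $z$), so this factor is at most $1+\tripof$.

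More carefully, the cleanest route is to avoid the $x$-dependence entirely by bounding $\psi_n$ pointwise in a way that sums out $t$ and $z$. Using $\taupf(x-t)\le\taupo(x-t)$ and the trivial bound $\taupo\le \delta_\orig + \taup$, one gets $\sum_{t,z}\taupo(z-a_1)\taupf(t-a_2)\taupf(z-t)\taupf(x-t)\taupo(x-z)$, and here I would fix $x$, sum over $z$ against the two factors $\taupo(z-a_1)$ and $\taupf(z-t)\taupo(x-z)$... actually it is more transparent to note that $\sup_x \sum_{t,z} \psi_n(\orig,a,t,z,x)$ is precisely (up to dropping the indicator) of the shape $\sup_x (\taupf(\cdot - a)\ast \taupf \ast [\text{something}])(x)$ and is therefore bounded by $\sup_x \tripof(x-a)$-type quantities; since $\tripof = \sup_{\orig\neq x}\tripof(x)$ and the collapsed cases contribute the ``$1$'' (from the $\delta_\orig$ in $\taupo$, $\taupf$), the whole factor is at most $1+\tripof$. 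Multiplying the two pieces yields $\tripf(\orig)(1+\tripof)(T_p)^{n-1}$, as claimed. For $n=1$ one checks directly that $\Psi^{(0)}(\orig, u) = \psi_0(\orig, u, \cdot)$ summed appropriately reduces to the same bound with $(T_p)^0 = 1$.

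\textbf{Main obstacle.} The genuinely delicate point is the bookkeeping of the factors of $p$ and the coincidence indicators when one refrains from summing over $x$. In Proposition~\ref{thm:db:main_thm} the final sum over $x$ absorbs the last two-point function $\taupf(x-t)$ (or $\taup$) and turns $\psi_n$ into $\tripf(\orig)$; here, because $x$ is held fixed, I must instead recognize the partial sum $\sum_{t,z}\psi_n(\orig,a,t,z,x)$ as (a shifted evaluation of) one of the modified triangles $\tripof$, $\tripoff$, and argue that the extra ``free'' endpoint costs only the benign additive constant $1$ rather than another triangle factor — this is exactly where the definitions $\taupo(x) = \delta_{\orig,x}+\taup(x)$ and $\taupf(x)=\delta_{\orig,x}+p\taup(x)$, together with the fact that $\tripof$ and $\tripoff$ are defined with a supremum that includes (resp.\ excludes) the collapsed case, do the work. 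I expect the rest — invoking~\eqref{eq:db:Psi_triangles_bound} for the $\Psi^{(n-1)}$ prefactor and the factorization via independence of the configurations, which is already established in the proof of Lemma~\ref{lem:db:pi_psi_bounds} — to be routine.
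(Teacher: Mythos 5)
Your proposal is correct and is essentially the paper's proof: the paper likewise takes the unsummed version of the first bound in Lemma~\ref{lem:db:pi_psi_bounds}, factors it as $\big(\sup_{w\neq u}\sum_{t,z}\psi_n(w,u,t,z,x)\big)\sum_{w,u}\Psi^{(n-1)}(w,u)$ with $\sum_{w,u}\Psi^{(n-1)}(w,u)\le\tripf(\orig)(T_p)^{n-1}$ from~\eqref{eq:db:Psi_triangles_bound}, and bounds the remaining factor by $\taupf(x-u)\taupo(x-w)+\tripof(u-w)\le 1+\tripof$, i.e.\ the collapsed case $t=z=x$ contributes the $1$ and in the fully distinct case the two $x$-dependent factors are bounded by $1$, leaving exactly $\tripof(u-w)$. (Only the passing claim ``$\tripof+\tripoff\le 1+\tripof$'' in your first paragraph is false as written, since $\tripoff\ge 1$; your subsequent ``more careful'' route is the correct one and coincides with the paper's.)
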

\begin{proof}
Note that
	\[ \Pi_p^{(n)}(x) \leq \Big( \sup_{w \neq u} \sum_{t,z} \psi_n(w,u,t,z,x) \Big) \sum_{w,u} \Psi^{(n-1)}(w,u). \]
Since we do not sum over $x$, we bound the factors depending on $x$ by $1$, and so
	\[ \sum_{t,z} \psi_n(w,u,t,z,x) \leq \taupf(x-u)\taupo(x-w) + \tripof(u-w) \leq 1 + \tripof(u-w) \]
implies the claim together with Proposition~\ref{thm:db:main_thm}.
\end{proof}

\subsection{Displacement bounds} \label{sec:db:displacement}
The aim of this section is to give bounds on $p \sum_x [1-\cos(k\cdot x)] \Pi_p^{(n)}(x)$. Such bounds are important in the analysis in Section~\ref{sec:bootstrap_analysis}. We regard $[1-\cos(k\cdot x)]$ as a ``displacement factor''. To state the main results, Propositions~\ref{thm:db:displacement_thm} and~\ref{thm:db:displacement_thm_n1}, we introduce some displacement quantities:
\begin{definition}[Diagrammatic displacement quantities] \label{def:displacement_quantities}
Let $x\in\Zd$ and $k \in\fspace$. Define
	\al{ W_p(x;k) &:= p(\taupk\ast\taupo)(x), \qquad W_p(k) = \max_{x\in\Zd} W_p(x;k), \\
		 H_p(b_1, b_2;k) &:= p^5 \hspace{-0.15cm} \sum_{t,w,z,u,v} \taup(z) \taup(t-u) \taup(t-z) \taupk(u-z) \taup(t-w) \taup(w-b_1) \taup(v-w) \taup(v+b_2-u), \\
		 H_p(k) &:=\max_{b_1\neq \orig \neq b_2 \in\Zd} H_p(b_1, b_2;k). }
\end{definition}

Note that Proposition~\ref{thm:db:bounds_for_n0} already provides displacement bounds for $n=0$. The following two results give bounds for $n\geq 1$:

\begin{prop}[Displacement bounds for $n \geq 2$]\label{thm:db:displacement_thm}
For $n \geq 2$ and $x\in\Zd$,
	 \[ p \sum_x [1-\cos(k\cdot x)] \Pi_p^{(n)}(x) \leq 11 (n+1) \big(T_p\big)^{1 \vee (n-2)} \big(\tripoff\big)^3 W_p(k) \Big[1+ \tripo+T_p + \frac{H_p(k)}{W_p(k)} \Big]. \]
\end{prop}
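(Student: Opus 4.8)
The plan is to follow the same scheme as in the proof of Proposition~\ref{thm:db:main_thm}, but now distributing a displacement factor $[1-\cos(k\cdot x)]$ across the diagram. First I would start from the bound~\eqref{eq:db:Pi_bound_by_phi} (equivalently, the $\psi$-version in Lemma~\ref{lem:db:pi_psi_bounds}), which expresses $p\Pi_p^{(n)}(x)$ as a sum over $\psi_0$, a product of $n-1$ factors $\psi$, and a final $\psi_n$ factor, with $x = u_n$ appearing only in $\psi_n$. Writing $x$ as a telescoping sum of the increments $u_0, u_1-u_0, t_1-u_0, \ldots$ along the diagram — there are on the order of a constant times $n$ such increments when one accounts for all the internal points $t_i,w_i,z_i,u_i$ on a single segment — Lemma~\ref{lem:cosinesplitlemma} (split of cosines) lets me bound $[1-\cos(k\cdot x)]$ by a constant multiple of $(n+1)$ times the sum of $[1-\cos(k\cdot y)]$ over all the individual increments $y$. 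This produces a sum of $O(n)$ terms, in each of which the displacement factor sits on exactly one line of one segment of the diagram; the factor $(n+1)$ and a small absolute constant (the ``$11$'') come out front.

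Next I would bound each such term. A term with the displacement on the $j$-th segment factorizes (by the independence of the configurations, exactly as in~\eqref{eq:db:Pi_bound_by_F}) into: the sum $\sum_{w,u}\Psi^{(j-1)}(w,u) \le \tripf(\orig)(T_p)^{j-1}$ for the segments before $j$; a ``decorated'' segment sum where one $\taup$ has been replaced by $\taupk$ (or, on the initial/final segment, $\psi_0$ resp.\ $\psi_n$ carry the displacement); and the remaining $n-1-j$ ordinary segments contributing $(T_p)^{n-1-j}$ together with the final piece. The decorated segment sum is where $W_p(k) = \max_x p(\taupk\ast\taupo)(x)$ and $H_p(k)$ enter: collapsing a triangle-shaped piece of the segment that does not carry the displacement yields a factor $\tripoff$ (or $\tripo$, or $T_p$, depending on which line is decorated and whether coincidences occur), and the line carrying $\taupk$ together with a neighbouring $\taupo$ line contributes $W_p(k)$; the worst internal configuration, in which the displacement line is ``buried'' inside the $F^{(1)}$-type sub-diagram so that one cannot cleanly peel off a $W_p(k)$, is precisely the quantity $H_p(b_1,b_2;k) \le H_p(k)$, which is why it appears as the additive term $H_p(k)/W_p(k)$ inside the bracket (after factoring $W_p(k)$ out front). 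Carefully enumerating which of the at most three $\tripoff$-collapses, plus the $1$, $\tripo$, $T_p$ alternatives, can occur on the decorated segment yields the bracket $[1+\tripo+T_p+H_p(k)/W_p(k)]$ and the prefactor $(\tripoff)^3$; the remaining segments give $(T_p)^{1\vee(n-2)}$ after absorbing $\tripf(\orig)$ and the first/last pieces into constants (the $1\vee$ handles $n=2$, where there is no ``bulk'' segment left).

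The main obstacle, and the part requiring genuine care rather than bookkeeping, is the decorated-segment analysis: on a single $\psi$-segment one must re-run the triangle decomposition of the proof of Proposition~\ref{thm:db:main_thm} while keeping track of (a) which line carries the $[1-\cos]$ factor, (b) the indicator-induced case distinctions in $\psi^{(2)}$ (the collapses $|\{t,z,u\}|\in\{1,3\}$), and (c) the $F^{(1)}$ versus $F^{(2)}$ alternative, checking in each case that after peeling off $W_p(k)$ (or falling back to $H_p(k)$) the residual sum is one of $\tripoff, \tripo, T_p$ or is bounded by $1$. I would organize this as a short case analysis, mirroring~\eqref{eq:db:psi_1_bound}--\eqref{eq:db:psi_2_bound} but with one line replaced by its $k$-decorated version and invoking Proposition~\ref{thm:random_walk_triangle_related} where a convolution with a shifted Green's function appears, exactly as the $W_p$ and $H_p$ notation is set up to anticipate. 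The rest — the cosine split, the telescoping choice of increments, and the factorization over segments — is routine and parallels Lemma~\ref{lem:db:pi_psi_bounds} and the proof of Proposition~\ref{thm:db:main_thm}.
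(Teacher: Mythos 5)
Your overall strategy coincides with the paper's: start from the $\psi$-form bound of Lemma~\ref{lem:db:pi_psi_bounds}, distribute $[1-\cos(k\cdot x)]$ over the segments with Lemma~\ref{lem:cosinesplitlemma}, factor the chain into the part before the decorated segment, the decorated segment itself, and the part after, and extract $W_p(k)$ (or, in one bad sub-case, $H_p(k)$) from the decorated segment. Two points, however, would not go through as written. First, the segments to the \emph{right} of the decorated one cannot be bounded by $(T_p)^{n-1-j}$ via the same forward recursion that gives $\sum_{w,u}\Psi^{(j-1)}(w,u)\leq \tripf(\orig)(T_p)^{j-1}$: the decorated segment is attached on both sides, so the chain must also be summed inward from the far end. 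The paper handles this by introducing the reversed quantity $\bar\Psi^{(m)}$ with $\sum_{t,z}\bar\Psi^{(m)}(t,z)\leq\tripoff(T_p)^m$ (Lemma~\ref{lem:db:bar_psi_bound}); this companion lemma is routine but necessary, and your plan does not supply it. Second, and more substantively, your reading of the exponent $1\vee(n-2)$ as mere notation for ``no bulk segment left'' is backwards: for $n=2$ the stated bound carries a factor $T_p$, which is \emph{stronger} than the $(T_p)^{n-2}=1$ your generic argument yields. Concretely, the fully collapsed sub-case of the decorated segment (the paper's $\tilde\phi^{(5)}$, where $t=z=u$ and $a=w$) contributes only $\tripoff\,W_p(k)$ with no small factor; for $n\geq 3$ the surrounding chain supplies $(T_p)^{n-2}$, but for $n=2$ it supplies nothing, and the paper must run a separate estimate (bounding $\tilde\psi_0$ together with the last segment $\psi_n$ directly, using~\eqref{eq:db:Psi_last_segment_bound}) to recover the factor $T_p$. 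As described, your argument proves the proposition only with exponent $0\vee(n-2)$ and therefore fails for $n=2$.

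Two smaller inaccuracies. The quantity $H_p(k)$ arises from the fully non-coincident case of the $\psi^{(2)}/F^{(2)}$ sub-diagram (the paper's $\tilde\phi^{(3)}$, when neither of the enveloping $\taupo,\taupf$ lines collapses), not from the $F^{(1)}$-type piece: the $\psi^{(1)}$ contribution is absorbed into $(\tripof+\trip)W_p(k)$ and never produces $H_p$. And the paper performs the cosine split in two stages --- first into exactly $n+1$ per-segment displacements $d_i$, which is what produces the clean prefactor $(n+1)$, and only afterwards a secondary split within the affected segment --- whereas splitting $x$ into all $O(n)$ elementary increments at once, as you propose, degrades the constant in front.
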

\begin{prop}[Displacement bounds for $n=1$]\label{thm:db:displacement_thm_n1}
For $x\in \Zd$,
	\[ p \sum_x [1-\cos(k\cdot x)] \Pi_p^{(1)}(x) \leq 9 W_p(k) \Big[\tripf(\orig)\big(\tripof+\trip\big) + \tripo + \trip \Big] + p^2 (\jeq\ast\taupk\ast\taup)(\orig).\]
\end{prop}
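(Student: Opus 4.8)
\textbf{Proof plan for Proposition~\ref{thm:db:displacement_thm_n1}.}

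The plan is to start from the bound on $\Pi_p^{(1)}$ in terms of the $\phi$ functions, namely the $n=1$ instance of~\eqref{eq:db:Pi_bound_by_phi}:
\[ p\,\Pi_p^{(1)}(x) \leq \sum_{w_0,u_0,z_1} \phi_0(\orig,w_0,u_0,z_1)\,\phi_1(u_0,z_1,z_1,x), \]
where I recall $u_1=x$ and the middle sum collapses because for $n=1$ there is no $\phi$-factor and $t_1=z_1$ is forced by the structure (the only remaining summation variables are $w_0,u_0,z_1$, with $u_0\neq x$). After multiplying by $[1-\cos(k\cdot x)]$ and summing over $x$, I apply the cosine-split lemma (Lemma~\ref{lem:cosinesplitlemma}) to distribute the displacement factor $[1-\cos(k\cdot x)]$, where $x$ appears as a sum of the edge-displacements inside the diagram defining $\phi_0\cdot\phi_1$. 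Concretely, writing $x$ as a telescoping sum of the vectors carried by the $\taup$-type lines of the diagram, the lemma turns $[1-\cos(k\cdot x)]$ into a sum of terms, each of which places a single factor $[1-\cos(k\cdot\,\cdot)]$ on one line, converting that $\taup$ (or $\taupf$, $\taupo$) into $\taupk$ (resp.\ its $\bullet$/$\circ$-decorated displaced analogue); the number of lines counted determines the combinatorial prefactor, which is where the constant $9$ comes from.

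Next I estimate each of the resulting diagrams. The strategy is standard: in every term, locate the unique displaced line, keep it together with enough neighbouring lines to form a factor of $W_p(k)=\max_x p(\taupk\ast\taupo)(x)$ (or one of the related quantities), and bound the remaining lines by taking suprema, producing factors of $\tripf(\orig)$, $\tripof$, $\trip$, or $\tripo$. The diagram for $\Pi_p^{(1)}$ is small --- it is essentially two triangles sharing the segment through $u_0$ and $z_1$ --- so after the split there are only a handful of topologically distinct cases. When the displaced line is one of the ``bulk'' lines of $\phi_0$ (the $\taupf(w_0)$, $\ttaup(u_0)$, or $\taup(w_0-u_0)$ line), I extract $W_p(k)$ from that line paired with an adjacent $\taupo$ and bound what is left by $\tripf(\orig)$ times either $\tripof$ or $\trip$ (this yields the $\tripf(\orig)(\tripof+\trip)$ contribution). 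When the displaced line sits in the $\phi_1$-part (the $\taupf$/$\taupo$ lines incident to $x$ and $z_1$), one of the two factors of the Kronecker/$\delta_{\orig,w_0}$ branch of $\phi_0$ degenerates the diagram and I get the bare $\tripo+\trip$ contribution. The exceptional term is the one coming from the $\delta_{b,w}\,p\jeq(u-b)$ piece of $\phi_0$ with the displacement landing on the surviving $\jeq$-line: this line is $\jeq=2dD$, not a $\taup$, so it is not absorbed into $W_p(k)$ but instead directly yields the leftover term $p^2(\jeq\ast\taupk\ast\taup)(\orig)$, which is exactly the last summand in the statement.

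I expect the main obstacle to be bookkeeping rather than any genuine difficulty: correctly enumerating which line each split term displaces, making sure the coincidence indicators hidden in $\phi_0$ and $\phi_1$ (the $\delta_{\orig,w_0}$ and $\mathds 1_{\{|\{t,z,x\}|\neq 2\}}$) are respected when I collapse lines, and verifying that in each case a genuine $W_p(k)$ (and not a ``worse'' displaced triangle) can be extracted with the correct remaining supremum diagram. The telescoping representation of $x$ must be chosen so that the $\jeq$-line is among the summands (so that displacing it is a legitimate case), which forces the slightly asymmetric final term; once that bookkeeping is fixed, each individual diagram bound is a one-line supremum argument of the type already illustrated in the proof of Proposition~\ref{thm:db:main_thm}, and summing the at-most-nine contributions gives the stated inequality.
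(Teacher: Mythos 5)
There is a genuine gap at the very first step. You assert that for $n=1$ the bound~\eqref{eq:db:Pi_bound_by_phi} reads $p\,\Pi_p^{(1)}(x)\leq\sum_{w_0,u_0,z_1}\phi_0(\orig,w_0,u_0,z_1)\,\phi_1(u_0,z_1,z_1,x)$ because ``$t_1=z_1$ is forced by the structure''. It is not: for $n=1$ only the middle product $\prod_{i=1}^{n-1}\phi$ is empty, while $t_1$ remains a free summation variable of $\phi_n(u_0,t_1,z_1,x)=\mathds 1_{\{|\{t_1,z_1,x\}|\neq 2\}}\taupf(t_1-u_0)\taupf(z_1-t_1)\taupf(x-t_1)\taupo(x-z_1)$ (it comes from the point $t_n\in\omega_n^{u_{n-1},x}$ in Lemma~\ref{lem:db:Pi_bounds_by_F}, which is distinct from $z_n\in\omega_n^{u_n}$). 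Worse, setting $t_1=z_1$ makes the coincidence indicator force $z_1=x$, so your starting expression reduces to $\sum_{w_0,u_0}\phi_0(\orig,w_0,u_0,x)\taupf(x-u_0)$, which is only the fully collapsed branch $t=z=x$ of the correct bound. The dominant contribution — the all-distinct branch $p^2\sum\tilde\phi_0(\orig,w,u,z)\taupf(t-u)\taup(z-t)\taup(x-t)\taup(x-z)$, whose treatment requires the three-way cosine split $x=u+(z-u)+(x-z)$ and produces the term $9\,W_p(k)\,\tripf(\orig)\,\tripof+3\,W_p(k)\,\tripf(\orig)\,\trip$ — is entirely absent from your plan, so what you propose does not upper-bound $p\sum_x[1-\cos(k\cdot x)]\Pi_p^{(1)}(x)$ at all. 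The constant $9$ is exactly the product of the factor $3$ from that cosine split with a further factor $3$ from the sub-bounds of one of the resulting diagrams; it cannot be recovered from the degenerate diagram you start with.

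A secondary inaccuracy: the leftover term $p^2(\jeq\ast\taupk\ast\taup)(\orig)$ does arise, as you say, from the $\delta_{\orig,w}\,p\jeq(u)$ piece of $\phi_0$ in the collapsed case $t=z=x$, but the displacement there sits on the $\taupo(x)$ line (turning it into $\taupk(x)$), not on the $\jeq$ line. The reason it is kept separate is not that a $\jeq$ line cannot be absorbed into $W_p(k)$ — one could crudely bound this term by $2dp\,W_p(k)$ — but that, unlike every other term, it carries no accompanying small triangle ($\tripof$, $\trip$, or $\tripo$) to supply the factor $1/d$ needed in Section~\ref{sec:bootstrap_analysis}; it must therefore be isolated and bounded later as $p^2W_p^{(1,1)}(\orig;k)$ in Lemma~\ref{lem:bootstrap:bounds_on_Pi0}. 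Your overall strategy (cosine split, extract $W_p(k)$, bound the rest by suprema) is the right one and matches the paper, but it must be run on the correct five-variable diagram with the indicator $\mathds 1_{\{|\{t,z,x\}|\neq 2\}}$ expanded into its two cases.
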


In preparation for the proofs, we define a function $\bar\Psi^{(n)}$, similar to $\Psi^{(n)}$, and prove an almost identical bound to the one in Proposition~\ref{thm:db:main_thm}. Let $\bar\Psi^{(0)}(t,z) = \phi_n(\orig,t,z,\orig) / \taupf(t)$. For $i \geq 1$, define
	\[ \bar\Psi^{(i)}(t,z) := \sum_{w,u,t', z'} \bar\Psi^{(i-1)}(t',z') \Big[\phi^{(1)}(\orig,t,w,z,u,z') + \phi^{(2)}(\orig,w,t,z,u,z') \Big] \frac{\taupf(t'-u)}{\taupf(t)}. \]
Note that in $\phi^{(2)}$, the points $t$ and $w$ swap roles, so that in both $\phi^{(1)}$ and $\phi^{(2)}$, $u$ is adjacent to $t'$ and $t$ is the point adjacent to $\orig$; and in particular, the factor $\taupf(t)$ cancels out. The following lemma, in combination with Lemma~\ref{lem:db:pi_psi_bounds}, is analogous to the bound~\eqref{eq:db:Psi_triangles_bound}, and so is its proof, which is omitted.
\begin{lemma} \label{lem:db:bar_psi_bound}
For $n \geq 0$,
	\[ \sum_{t,z\in\Zd} \bar\Psi^{(n)}(t,z) \leq \tripoff\big( T_p \big)^n.  \]
\end{lemma}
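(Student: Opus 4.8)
\textbf{Proof plan for Lemma~\ref{lem:db:bar_psi_bound}.}

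The plan is to mimic exactly the induction on $n$ that proves~\eqref{eq:db:Psi_triangles_bound} in Proposition~\ref{thm:db:main_thm}, with $\bar\Psi^{(n)}$ playing the role of $\Psi^{(n)}$ and with the bookkeeping of the extra ``connecting'' factor $\taupf(t'-u)/\taupf(t)$ accounted for at each step. First I would establish the base case: $\sum_{t,z} \bar\Psi^{(0)}(t,z) = \sum_{t,z} \phi_n(\orig,t,z,\orig)/\taupf(t)$, and since $\phi_n(\orig,t,z,\orig) = \mathds 1_{\{|\{t,z,\orig\}| \neq 2\}}\taupf(t)\taupf(z-t)\taupf(t)\taupo(z)$ after dividing out $\taupf(t)$ we are left with a sum bounded (dropping the indicator) by $\sum_{t,z} \taupf(t)\taupf(z-t)\taupo(z)$; reindexing this is exactly $\tripoff(\orig) \le \tripoff$, which is the claimed bound for $n=0$.

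For the inductive step, assume the bound for $n-1$ and write, using the recursive definition,
\[
\sum_{t,z} \bar\Psi^{(n)}(t,z) = \sum_{t',z'} \bar\Psi^{(n-1)}(t',z') \sum_{t,z,w,u} \Big[\phi^{(1)}(\orig,t,w,z,u,z') + \phi^{(2)}(\orig,w,t,z,u,z')\Big]\frac{\taupf(t'-u)}{\taupf(t)}.
\]
I would bound the inner sum by its supremum over $(t',z')$ and pull it out, so that after the induction hypothesis the task reduces to showing
\[
\sup_{t',z'} \sum_{t,z,w,u} \Big[\phi^{(1)}(\orig,t,w,z,u,z') + \phi^{(2)}(\orig,w,t,z,u,z')\Big]\frac{\taupf(t'-u)}{\taupf(t)} \le T_p.
\]
Recalling from Definition~\ref{def:db:psi_phi_fcts} that $\phi^{(j)}(a_2,t,w,z,u,b) = \frac{\taupo(b-w)}{\taupo(z)}\psi^{(j)}(\orig,a_2,t,w,z,u)$, the factor $\taupo(z)$ in the denominator cancels the $\taupo(z-\orig)=\taupo(z)$ inside $\psi^{(j)}$; likewise the $\taupf(t)$ in the denominator cancels the $\taupf(t-\orig)$ appearing in $\psi^{(1)}$ (and, after the role-swap of $t$ and $w$, the corresponding factor in $\psi^{(2)}$). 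What remains is a product of $\taup$ / $\taupf$ / $\taupo$ factors together with $\taupo(z'-w)$ and $\taupf(t'-u)$, and this is precisely the combinatorial structure that was bounded by $(1+\trip)\tripof + \trip\tripoff = T_p$ in~\eqref{eq:db:psi_1_bound}, the $|\{t,z,u\}|=1$ term, and~\eqref{eq:db:psi_2_bound} in the proof of Proposition~\ref{thm:db:main_thm}. Indeed the diagrams differ from those in Figure~\ref{fig:Psi_diagrams} only in that the two ``dangling'' factors now attach to $z'$ and to $t'$ (a constant apart from each other via $u$, handled by the same substitution trick as in~\eqref{eq:db:psi_2_bound}) rather than both to the fixed point $a$; the bounds $\trip$, $\tripof$, $\tripoff$ are supremum-bounds and so are insensitive to exactly which external vertices the loose ends terminate at. Carefully matching up the three sub-cases (the $\psi^{(1)}$ term giving $\trip\tripof$, the $|\{t,z,u\}|=1$ part of $\psi^{(2)}$ giving $\tripof$, and the $|\{t,z,u\}|=3$ part giving $\trip\tripoff$) yields the factor $T_p$ and closes the induction.

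The main obstacle I anticipate is purely organizational rather than mathematical: one must verify that the cancellation of $\taupf(t)$ really does occur uniformly in both $\phi^{(1)}$ and $\phi^{(2)}$ — this relies on the remark after Definition~\ref{def:db:psi_phi_fcts} that ``in $\phi^{(2)}$ the points $t$ and $w$ swap roles,'' so that $t$ is always the neighbour of $\orig$ — and that after all cancellations the surviving factors are grouped so as to form genuine (modified) triangle diagrams with $T_p$ as the supremum bound. Since this is exactly parallel to the already-omitted proof of~\eqref{eq:db:Psi_triangles_bound}, I would state the lemma with its proof omitted, referencing the proof of Proposition~\ref{thm:db:main_thm} for the details, as the authors indeed do.
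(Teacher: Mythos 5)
Your proposal is correct and follows exactly the route the paper intends: the paper omits this proof precisely because it is the same induction as for~\eqref{eq:db:Psi_triangles_bound}, and your base case ($\tripoff(\orig)\le\tripoff$), your cancellation of $\taupo(z)$ and $\taupf(t)$ (using the $t\leftrightarrow w$ role swap in $\phi^{(2)}$), and your matching of the three sub-cases to $\trip\tripof$, $\tripof$, and $\trip\tripoff$ reproduce the intended argument faithfully.
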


\begin{proof}[Proof of Proposition~\ref{thm:db:displacement_thm}]
Setting $\vec v_i = (w_{i-1}, u_{i-1}, t_i,w_i,z_i,u_i)$, we use the bound
	\eqq{ p \sum_x [1-\cos(k\cdot x)]\Pi_p^{(n)} (x) \leq \sum_x \hspace{-0.1cm}\sum_{\vec t,\vec w, \vec z, \vec u} [1-\cos(k\cdot x)] \psi_0(\orig,w_0,u_0) \psi_n(w_{n-1}, u_{n-1}, t_n, z_n, x)
				 \prod_{i=1}^{n-1} \psi(\vec v_i), \label{eq:db:disp:general_n_first_bound}}
which is, in essence, the first bound of Lemma~\ref{lem:db:pi_psi_bounds}. The next step is to distribute the displacement factor $1-\cos(k\cdot x)$ over the $n+1$ segments. To this end, we write $x = \sum_{i=0}^{n} d_i$, where $d_i =w_i-u_{i-1}$ for even $i$ and $d_i=u_i-w_{i-1}$ for odd $i$ (with the convention $u_{-1}=\orig$ and $w_n=u_n=x$). Over the course of this proof, we drop the subscript $i$ and are then confronted with a displacement $d=d_i$ (which is not to be confused with the dimension).

Using the Cosine-split lemma~\ref{lem:cosinesplitlemma}, we obtain
	\algn{p \sum_x [1-\cos(k\cdot x)]\Pi_p^{(n)} (x) & \leq (n+1) \sum_{i=0}^{n}\sum_x \hspace{-0.1cm}\sum_{\vec t,\vec w, \vec z, \vec u} [1-\cos(k\cdot d_i)] \psi_0(\orig,w_0,u_0) \notag\\
			&\hspace{2cm} \times \psi_n(w_{n-1}, u_{n-1}, t_n, z_n, x)  \prod_{j=1}^{n-1} \psi(\vec v_j), \label{eq:db:disp_d_i_split}}
with $d_i$ as introduced above. We now handle these terms for different $i$.

\underline{Case (a): $i\in\{0,n\}$.} Let us start with $i=n$, so that $d_n \in \{x-u_{n-1}, x-w_{n-1}\}$. The summand for $i=n$ in~\eqref{eq:db:disp_d_i_split} is equal to
	\algn{& \sum_{w \neq u} \Psi^{(n-1)}(w,u) \sum_{t,z,x \neq u} [1-\cos(k\cdot d')] \psi_n(w,u,t,z,x) \notag\\
		\leq & \tripf(\orig) \big(T_p\big)^{n-1} \max_{0 \neq u} \sum_{t,z,x \neq u} [1-\cos(k\cdot d)] \psi_n(\orig,u,t,z,x), \label{eq:db:disp:case_last_segment}}
where $d'\in\{x-w, x-u\}$ and $d\in\{x,x-u\}$. We expand the indicator in $\psi_n$ into two cases. If $t=z=x$, then we can bound the maximum in~\eqref{eq:db:disp:case_last_segment} by $p\sum_x [1-\cos(k\cdot d)] \taupo(x) \taup(x-u)$, which is bounded by $W_p(k)$ for both values of $d$. If $t,z,x$ are distinct points, then for $d=x$, the maximum in~\eqref{eq:db:disp:case_last_segment} becomes
	\[ p^2 \max_{u\neq 0} \sum_{t,z,x} [1-\cos(k\cdot d)] \taupo(z) \taupf(t-u) \taup(t-z) \taup(x-z)\taup(x-t)
		= p^2 \sum \mathrel{\raisebox{-0.25 cm}{\includegraphics{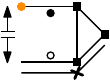}}}.\]
Note that in the pictorial representation, we represent the factor $[1-\cos(k \cdot (x-\orig))]$ by a line from $\orig$ to $x$ carrying a `$\times$' symbol. We use the Cosine-split lemma~\ref{lem:cosinesplitlemma} again to bound
	\[ [1-\cos(k\cdot x)] \leq 2 \big([1-\cos(k\cdot z)] + [1-\cos(k\cdot (x-z))]\big), \]
which results in
	\al{p^2 \sum \mathrel{\raisebox{-0.25 cm}{\includegraphics{Disp_iN_general.pdf}}} 
		\ & \leq 2 p^2 \Big[ \sum \mathrel{\raisebox{-0.25 cm}{\includegraphics{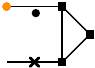}}}
			\ + \sum \mathrel{\raisebox{-0.25 cm}{\includegraphics{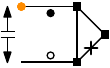}}} \Big] \\
		& \leq 2p^2 \sum \mathrel{\raisebox{-0.25 cm}{\includegraphics{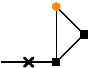}}}
			\ + 2p^3 \sum \mathrel{\raisebox{-0.25 cm}{\includegraphics{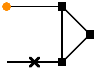}}}
			\ + 2p \sum \Big( \mathrel{\raisebox{-0.25 cm}{\includegraphics{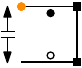}}} 
			 \Big( \sup_{\textcolor{blue}{\bullet}, \textcolor{green}{\bullet}} p \sum \mathrel{\raisebox{-0.25 cm}{\includegraphics{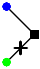}}}
			 				 \Big)\Big) \\
		& \leq 2p \sum \Big( \mathrel{\raisebox{-0.25 cm}{\includegraphics{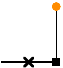}}} 
			 \Big( \sup_{\textcolor{blue}{\bullet}, \textcolor{green}{\bullet}} p \sum \mathrel{\raisebox{-0.25 cm}{\includegraphics{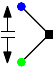}}} 
			 				\Big)\Big)
			 \ + 2p^3  \sum \mathrel{\raisebox{-0.25 cm}{\includegraphics{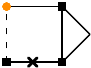}}} + 2 \tripof W_p(k) \\
		& \leq 2\tripf W_p(k) + 2p^2 \sum \Big( \Big( \sup_{\textcolor{blue}{\bullet}, \textcolor{green}{\bullet}} p \sum
			\mathrel{\raisebox{-0.25 cm}{\includegraphics{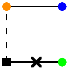}}} \Big)
			  \mathrel{\raisebox{-0.25 cm}{\includegraphics{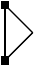}}} \Big) + 2 \tripof W_p(k)\\
		& \leq 2(2\tripof+\trip) W_p(k). }
It is not hard to see that a displacement $d=x-u$ yields the same bound.
Similar computations show that the case $i=0$ yields a contribution of at most
	\[  \tripoff \big(T_p\big)^{n-1} \tripof W_p(k). \]
	
\underline{Case (b): $1 \leq i < n$.} We want to apply both the bound~\eqref{eq:db:Psi_triangles_bound} and Lemma~\ref{lem:db:bar_psi_bound}. To this end, we rewrite the $i$-th summand in~\eqref{eq:db:disp_d_i_split} as
	\al{ \sum_x \hspace{-0.1cm} & \sum_{\vec t,\vec w, \vec z, \vec u} [1-\cos(k\cdot d_i)] \psi_0(\orig,w_0,u_0) \psi_n(w_{n-1}, u_{n-1}, t_n, z_n, x)  \prod_{j=1}^{n-1} \psi(\vec v_j) \\
		& = \sum_x \sum_{a_1,a_2,b_1,b_2} \Big( \Psi^{(i-1)}(a_1,a_2) \bar\Psi^{(n-i-1)}(b_1-x,b_2-x) \\
		& \qquad\qquad\qquad \times \sum_{t,w,z,u} \underbrace{\phi(a_2,t,w,z,u,b_2) [1-\cos(k\cdot d_i)] \taupo(z-a_1)\taupf(b_1-u)}_{=:\tilde\phi(a_1,a_2,t,w,z,u,b_1, b_2;k,d)} \Big) \\
		& \leq \big(\tripf(\orig)\big) \big(T_p\big)^{i-1} \sum_{b_1',b_2'} \Big( \bar\Psi^{(n-i-1)}(b_1',b_2') \max_{a_1 \neq a_2} 
					\sum_{t,w,z,u,x} \tilde\phi(a_1,a_2,t,w,z,u,b_1'+x, b_2'+x;k,d_i) \Big) \\
		& \leq \big(\tripf(\orig) \tripoff\big) \big( T_p\big)^{n-2} \max_{a_1 \neq a_2,b_1\neq b_2} \sum_{t,w,z,u,x} \tilde\phi(a_1,a_2,t,w,z,u,b_1+x, b_2+x;k,d_i) \\
		& \leq \big(\tripoff\big)^2 \big( T_p\big)^{n-2} \max_{\orig \neq a,\orig \neq b} \sum_{t,w,z,u,x} \tilde\phi(\orig,a,t,w,z,u,b+x,x;k,d_i),  }
where we use the substitution $b_j'=x-b_j$ in the second line and the bound $\tripf(\orig) \leq \tripoff$ in the last line. It remains to bound the sum over $\tilde\phi$. We first handle the term due to $\phi^{(1)}$, and we call it $\tilde\phi^{(1)}$. Depending on the orientation of the diagram (i.e., the parity of $i$), the displacement $d=d_i$ is either $d=w-a=(w-t)+(t-a)$ or $d=u=(u-z)+z$. We perform the bound for $d=u$ and use the Cosine-split lemma~\ref{lem:cosinesplitlemma} once, so that we now have a displacement on an actual edge. In pictorial bounds, abbreviating $\vec v=(\orig,a,t,w,z,u,b+x,x;k,u)$, this yields
	\begin{align}\sum_{t,w,z,u,x} \tilde\phi^{(1)}(\vec v) &= p^3 \sum \mathrel{\raisebox{-0.25 cm}{\includegraphics{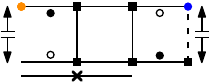}}}
			 \ \leq  2p^3 \Big[ \sum \mathrel{\raisebox{-0.25 cm}{\includegraphics{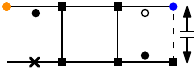}}}
				\ + \sum \mathrel{\raisebox{-0.25 cm}{\includegraphics{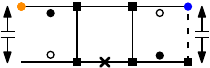}}} \Big]   \label{eq:db_dsp:psi1} \\
		& = 2p^3 \Big[ \sum \mathrel{\raisebox{-0.25 cm}{\includegraphics{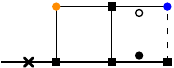}}}
			\ + p \sum \mathrel{\raisebox{-0.25 cm}{\includegraphics{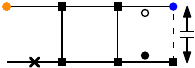}}}
			\ + \sum \Big( \mathrel{\raisebox{-0.25 cm}{\includegraphics{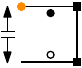}}}
					\Big( \sup_{\textcolor{orange}{\bullet}, \textcolor{green}{\bullet}}
					\sum \mathrel{\raisebox{-0.25 cm}{\includegraphics{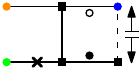}}} \Big)\Big) \Big]. \notag \end{align}
The bound in~\eqref{eq:db_dsp:psi1} consists of three summands. The first is
	\[ 2p^3 \sum \mathrel{\raisebox{-0.25 cm}{\includegraphics{Disp_i_1__split1_coll.pdf}}}
			\ \leq 2p \sum \Big( \mathrel{\raisebox{-0.25 cm}{\includegraphics{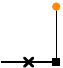}}}
				\Big( \sup_{\textcolor{altviolet}{\bullet}, \textcolor{turquoise}{\bullet}} p^2 \sum \mathrel{\raisebox{-0.25 cm}{\includegraphics{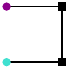}}}
				\Big( \sup_{\textcolor{darkorange}{\bullet}, \textcolor{green}{\bullet}} p^2 \sum \mathrel{\raisebox{-0.25 cm}{\includegraphics{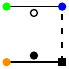}}}
			\Big) \Big) \Big) \leq 2 \tripoff \trip W_p(k), \]
the second is 
	\al{ 2p^4 \sum \sum \mathrel{\raisebox{-0.25 cm}{\includegraphics{Disp_i_1__split1_nocoll.pdf}}}
			\ & = 2p^4 \sum \mathrel{\raisebox{-0.25 cm}{\includegraphics{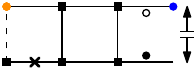}}}
			\ \leq 2p \sum \Big( \Big( \Big( \sup_{\textcolor{altviolet}{\bullet}, \textcolor{green}{\bullet}} 
				p \sum \mathrel{\raisebox{-0.25 cm}{\includegraphics{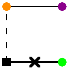}}} \Big)
				\sup_{\textcolor{altviolet}{\bullet}, \textcolor{green}{\bullet}} p^2 \sum \mathrel{\raisebox{-0.25 cm}{\includegraphics{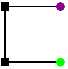}}} \Big)
				\mathrel{\raisebox{-0.25 cm}{\includegraphics{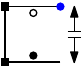}}} \Big) \\
		& \leq \tripof \trip W_p(k), }
and the third is
	\al{ 2p^3 \sum \Big( \mathrel{\raisebox{-0.25 cm}{\includegraphics{Disp_i_1__split2_bound1.pdf}}}
					& \Big( \sup_{\textcolor{orange}{\bullet}, \textcolor{green}{\bullet}} \sum \mathrel{\raisebox{-0.25 cm}{\includegraphics{Disp_i_1__split2_bound2.pdf}}} \Big)\Big)
			\ \leq 2p^2 \tripof \sup_{\textcolor{orange}{\bullet}} \sum \mathrel{\raisebox{-0.25 cm}{\includegraphics{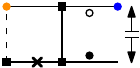}}} \\
			& \leq 2p^2 \tripof \sup_{\textcolor{orange}{\bullet}} \sum \Big( \sup_{\textcolor{altviolet}{\bullet}, \textcolor{green}{\bullet}}
				 \sum \mathrel{\raisebox{-0.25 cm}{\includegraphics{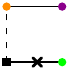}}} \Big)
				 \mathrel{\raisebox{-0.25 cm}{\includegraphics{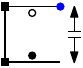}}} \Big)
			\leq 2 \big( \tripof \big)^2 W_p(k). }
The displacement $d=w-a$ satisfies the same bound. In total, the contribution in $\tilde\phi$ due to $\phi^{(1)}$ is at most
	\[ 4 \big(\tripoff\big)^3 \big(T_p \big)^{n-2} (\tripof + \trip) W_p(k). \]
Let us now tend to $\tilde\phi^{(2)}$. To this end, we first write $\tilde\phi^{(2)} = \sum_{j=3}^{5} \tilde\phi^{(j)}$, where
	\al{\tilde\phi^{(3)}(\orig,a,t,w,z,u,b+x,x;k,d) &= \tilde\phi^{(2)}(\orig,a,t,w,z,u,b+x,x;k,d) \mathds 1_{\{|\{t,z,u\}|=3\}}, \\
		\tilde\phi^{(4)}(\orig,a,t,w,z,u,b+x,x;k,d) &= [1-\cos(k\cdot d)] \delta_{z,u} \delta_{t,u} \taupo(u) \taup(w-u) \taupf(a-w) \taupf(u+x) \taupo(b-w-x), \\
		\tilde\phi^{(5)}(\orig,a,t,w,z,u,b+x,x;k,d) &= [1-\cos(k\cdot d)] \delta_{z,u} \delta_{t,u} \delta_{a,w} \taupo(u) \taup(a-u) \taupf(u+x) \taupo(b-a-x).	} 
Again, we set $\vec v=(\orig,a,t,w,z,u,b+x,x;k,u)$. Then
	\begin{align}\sum_{t,w,z,u,x} \tilde\phi^{(3)}(\vec v) &= p^2 \sum \mathrel{\raisebox{-0.25 cm}{\includegraphics{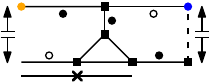}}} \notag \\
			 & \leq  p^2 \sum \mathrel{\raisebox{-0.25 cm}{\includegraphics{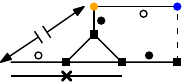}}} 
			 		\ + 2p^3 \Big[ \sum \mathrel{\raisebox{-0.25 cm}{\includegraphics{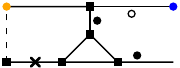}}}
			 		\ + \sum \mathrel{\raisebox{-0.25 cm}{\includegraphics{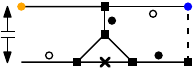}}} \Big]. \label{eq:db_dsp:psi3}\end{align}
The first term in~\eqref{eq:db_dsp:psi3} is
	\al{p^2 \sum \mathrel{\raisebox{-0.25 cm}{\includegraphics{Disp_i_3__coll1.pdf}}} &\leq p^2 
				\sum \Big( \mathrel{\raisebox{-0.25 cm}{\includegraphics{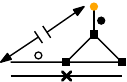}}} \Big( \sup_{\textcolor{altviolet}{\bullet}, \textcolor{green}{\bullet}}
				\sum \mathrel{\raisebox{-0.25 cm}{\includegraphics{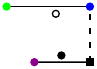}}}\Big)\Big)
			 \leq \tripoff p^2 \sum \mathrel{\raisebox{-0.25 cm}{\includegraphics{Disp_i_3__coll1_bound1.pdf}}} \\
		& \leq 2 \tripoff p^2 \Big[ \sum \mathrel{\raisebox{-0.25 cm}{\includegraphics{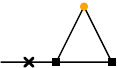}}}
				\ + p \sum \mathrel{\raisebox{-0.25 cm}{\includegraphics{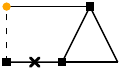}}}
				\ + \sum \Big( \mathrel{\raisebox{-0.25 cm}{\includegraphics{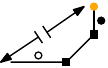}}} \Big(
				  \sup_{\textcolor{blue}{\bullet}, \textcolor{green}{\bullet}} 
				  	\sum \mathrel{\raisebox{-0.25 cm}{\includegraphics{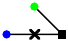}}}\Big)\Big) \Big] \\
		& \leq 2\tripoff \Big[ p^2 \sum \Big( \mathrel{\raisebox{-0.25 cm}{\includegraphics{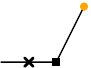}}}
					\Big( \sup_{\textcolor{blue}{\bullet}, \textcolor{green}{\bullet}} 
					\sum \mathrel{\raisebox{-0.25 cm}{\includegraphics{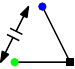}}} \Big)\Big) \\
		& \hspace{2.2cm} + p^3 \sum \Big(\Big( \sup_{\textcolor{blue}{\bullet}, \textcolor{green}{\bullet}} 
					\sum \mathrel{\raisebox{-0.25 cm}{\includegraphics{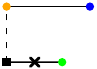}}} \Big)
					\mathrel{\raisebox{-0.25 cm}{\includegraphics{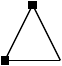}}} \Big) \Big] + \tripof W_p(k) \\
		& \leq  2\tripoff W_p(k) \Big( \tripof + \tripf + \trip \Big), }
the second term is
	\[  2p^3 \sum \mathrel{\raisebox{-0.25 cm}{\includegraphics{Disp_i_3__nocoll1_split1_subst.pdf}}}
			\ \leq 2p^3 \sum \Big( \Big( \sup_{\textcolor{altviolet}{\bullet}, \textcolor{green}{\bullet}}
				 \sum \mathrel{\raisebox{-0.25 cm}{\includegraphics{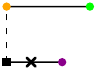}}} \Big)
				 \mathrel{\raisebox{-0.25 cm}{\includegraphics{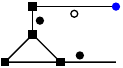}}} \Big)
			\leq 2 \tripoff \trip W_p(k), \]
and the third term is
	\begin{align} 2p^3 \sum \mathrel{\raisebox{-0.25 cm}{\includegraphics{Disp_i_3__nocoll1_split2.pdf}}} &=
				2p^3 \Big[ \sum \mathrel{\raisebox{-0.25 cm}{\includegraphics{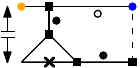}}}
				\ + p \sum \mathrel{\raisebox{-0.25 cm}{\includegraphics{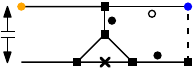}}} \Big] \notag\\
		& \leq  2p^3 \Big[ \sum \mathrel{\raisebox{-0.25 cm}{\includegraphics{Disp_i_3__nocoll1_split2_coll2.pdf}}}
				\ + \sum \Big( \mathrel{\raisebox{-0.25 cm}{\includegraphics{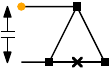}}} \Big(
				\sup_{\textcolor{altviolet}{\bullet}, \textcolor{green}{\bullet}} 
					\sum \mathrel{\raisebox{-0.25 cm}{\includegraphics{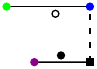}}} \Big)\Big)
				+ p \sum  \mathrel{\raisebox{-0.25 cm}{\includegraphics{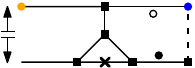}}} \Big] \notag\\
		& \leq 2 \tripoff\trip W_p(k) + 2p^3 \Big[ \sum \Big( \mathrel{\raisebox{-0.25 cm}{\includegraphics{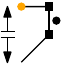}}}
				 \Big( \sup_{\textcolor{altviolet}{\bullet}, \textcolor{turquoise}{\bullet}} 
				 	\sum \mathrel{\raisebox{-0.25 cm}{\includegraphics{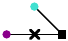}}}
				 \Big( \sup_{\textcolor{darkorange}{\bullet}, \textcolor{green}{\bullet}} 
				 	\sum \mathrel{\raisebox{-0.25 cm}{\includegraphics{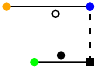}}} \Big)\Big)\Big) \notag\\
		& \hspace{5cm} +  p \sum \mathrel{\raisebox{-0.25 cm}{\includegraphics{Disp_i_3__nocoll1_split2_nocoll2_nocoll3.pdf}}} \Big] \notag\\
		& \leq 2 \tripoff W_p(k) \big( \trip + \tripf \big) + 2p^4 \sum \mathrel{\raisebox{-0.25 cm}{\includegraphics{Disp_i_3__nocoll1_split2_nocoll2_nocoll3.pdf}}}.
						\label{eq:db_dsp:psi3_almost_H} \end{align}
We are left to handle the last diagram appearing in the last bound of~\eqref{eq:db_dsp:psi3_almost_H}, which contains one factor $\taupo$ and one factor $\taupf$. We distinguish the case where neither collapses (this leads to the diagram $H_p(k)$) and the case where are least one of the factors collapses. Using $\taupf \leq \taupo$ and the substitution $y'=y-u$ for $y\in\{w,z,t\}$, we obtain
	\al{2p^4 \sum \mathrel{\raisebox{-0.25 cm}{\includegraphics{Disp_i_3__nocoll1_split2_nocoll2_nocoll3.pdf}}} 
		& \leq 2 H_p(k) + 4p^4 \sum_{z,t,w,u} \taup(t-u) \taup(w-t) \taupo(u+a_2-w) \\
		& \hspace{5cm} \times \taupk(z-u) \taup(t-z) \taup(z) \taup(w-a_1)  \\
		& = 2H_p(k) + 4 p^4 \sum_{t',w'} \Big( \taup(t') \taup(w'-t') \taupo(a_2-w') \sum_z \Big( \taupk(z') \taup(t'-z') \\
		& \hspace{5cm} \times \sum_u  \taup(z'+u) \taup(a_1-w'-u) \Big)\Big) \\
		& \leq 2H_p(k) + 4 \tripf(\orig) \tripo W_p(k). }
In total, this yields an upper bound on~\eqref{eq:db_dsp:psi3} of the form
	\[6 (\tripoff)^3 (T_p)^{n-2} \big[(\tripof + \trip + \tripo) W_p(k) + H_p(k) \big]. \]
The same bound is good enough for the displacement $d=w-a$. Turning to $j=4$, we consider the displacement $d=u$ and see that
	\al{\sum_{t,w,z,u,x} \tilde\phi^{(4)}(\vec v) &= p^2 \sum \mathrel{\raisebox{-0.25 cm}{\includegraphics{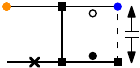}}} 
			\ = p^2 \sum \mathrel{\raisebox{-0.25 cm}{\includegraphics{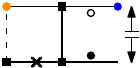}}} 
				\ \leq	p \sum \Big( \Big(  \sup_{\textcolor{altviolet}{\bullet}, \textcolor{green}{\bullet}} 
				p \sum \mathrel{\raisebox{-0.25 cm}{\includegraphics{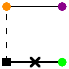}}} \Big)
				\mathrel{\raisebox{-0.25 cm}{\includegraphics{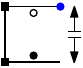}}} \Big) \\
		& \leq \tripof W_p(k),  }
which is also satisfied for $d=w-a$. Finally, $j=5$ forces $d=u$, and we have
	\[ \sum_{t,w,z,u,x} \tilde\phi^{(5)}(\vec v) = p^2 \sum \mathrel{\raisebox{-0.25 cm}{\includegraphics{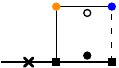}}} 
				\ \leq	p \sum \Big( \Big(  \sup_{\textcolor{altviolet}{\bullet}, \textcolor{green}{\bullet}} 
				p \sum \mathrel{\raisebox{-0.25 cm}{\includegraphics{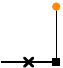}}} \Big)
				\mathrel{\raisebox{-0.25 cm}{\includegraphics{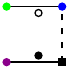}}} \Big)
		\leq \tripoff W_p(k),  \]
and we see that this bound is not good enough for $n=2$. To get a better bound for $n=2$, we bound
	\al{ p \sum_{w,u,s,t,z,x} & \tilde\psi_0(\orig,w,u) \taupk(s-w) \taup(s-u) \psi_n(u,s,t,z,x) \
			\leq \Big( p^2 \sum \mathrel{\raisebox{-0.25 cm}{\includegraphics{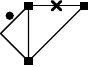}}} \Big) \sup_{u \neq s} \sum_{t,z,x} \psi_n(u,s,t,z,x) \\
		& \leq  \tripf(\orig) W_p(k) T_p, }
where we recall that $\tilde\psi_0$ is an upper bound on $\psi_0$ (see Definition~\ref{def:db:psi_phi_fcts}). The above bound is due to the fact that, thanks to~\eqref{eq:db:Psi_last_segment_bound}, the supremum over the sum over $\psi_n$ is bounded by the supremum in~\eqref{eq:db:inductive_step}. 
\end{proof}
\begin{proof}[Proof of Proposition~\ref{thm:db:displacement_thm_n1}]
Let $n=1$. Expanding the two cases in the indicator of $\phi_n$ gives
	\algn{ p \sum_x [1-\cos(k\cdot x)] \Pi_p^{(1)}(x) &\leq \sum_{w,u,t,z,x} [1-\cos(k\cdot x)] \phi_0(\orig,w,u,z) \phi_n(u,t,z,x) \notag\\
		& \leq p^2 \sum_{w,u,t,z,x} [1-\cos(k\cdot x)] \tilde\phi_0(\orig,w,u,z) \taupf(t-u) \taup(z-t) \taup(z-x) \taup(t-x) \notag\\
		& \quad + p  \sum_{w,u,x} [1-\cos(k\cdot x)] \phi_0(\orig,w,u,z) \taup(x-u) \label{eq:db:disp_n1_first_bound}, }
where we used the bound $\phi_0 \leq \tilde\phi_0$ (see Definition~\ref{def:db:psi_phi_fcts}) for the first summand. Since $\phi_0$ is a sum of two terms,~\eqref{eq:db:disp_n1_first_bound} is equal to
	\algn{& p^3 \sum_{w,u,t,z,x}[1-\cos(k\cdot x)] \taupf(w) \taup(u) \taup(u-w) \taupo(z-w) \taupf(t-u) \taup(z-t) \taup(x-z) \taup(x-t) \notag\\
		+& p^2 \sum_{u,x} \jeq(u) \taupk(x) \taup(x-u) \notag\\
		+& p^2 \sum_{w,u,x} [1-\cos(k\cdot x)] \taupf(w) \ttaup(u) \taup(u-w) \taupo(x-w) \taup(x-u). \label{eq:db:disp_n1_second_bound}}
We use the Cosine-split lemma~\ref{lem:cosinesplitlemma} on the first term of~\eqref{eq:db:disp_n1_second_bound} to decompose $x=u + (z-u) + (x-z)$, which gives
	\al{p^3 \sum_{w,u,t,z,x} & [1-\cos(k\cdot x)] \taupf(w) \taup(u) \taup(u-w) \taupo(z-w) \taupf(t-u) \taup(z-t) \taup(x-u) \taup(x-t) \\
		& = p^3 \sum \mathrel{\raisebox{-0.25 cm}{\includegraphics{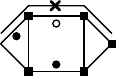}}}
			\ \leq 3 p^3 \Big[p \sum \mathrel{\raisebox{-0.25 cm}{\includegraphics{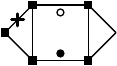}}}
				\ + \sum \mathrel{\raisebox{-0.25 cm}{\includegraphics{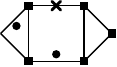}}}
				\ + \sum \mathrel{\raisebox{-0.25 cm}{\includegraphics{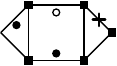}}} \Big]  \\
		& \leq 3p^2 \sum \Big(\Big( \sup_{\textcolor{altviolet}{\bullet}, \textcolor{green}{\bullet}} p \sum \Big( \sup_{\textcolor{darkorange}{\bullet}, \textcolor{blue}{\bullet}}
			p\sum  \mathrel{\raisebox{-0.25 cm}{\includegraphics{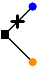}}} \Big)
				\mathrel{\raisebox{-0.25 cm}{\includegraphics{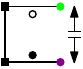}}} \Big)
				\mathrel{\raisebox{-0.25 cm}{\includegraphics{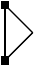}}} \Big)
			+  3p^3 \sum \mathrel{\raisebox{-0.25 cm}{\includegraphics{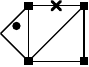}}} \\
		& \quad  + 3p^4 \sum \Big( \mathrel{\raisebox{-0.25 cm}{\includegraphics{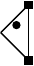}}}
			\Big( \sup_{\textcolor{darkorange}{\bullet}, \textcolor{blue}{\bullet}} \sum \mathrel{\raisebox{-0.25 cm}{\includegraphics{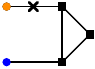}}} \Big) \Big)
			\ + 3p \sum \Big( \mathrel{\raisebox{-0.25 cm}{\includegraphics{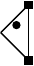}}}
				\Big(\sup_{\textcolor{darkorange}{\bullet}, \textcolor{blue}{\bullet}} p \sum  \mathrel{\raisebox{-0.25 cm}{\includegraphics{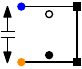}}} 
				\Big( \sup_{\textcolor{altviolet}{\bullet}, \textcolor{green}{\bullet}} p \sum \mathrel{\raisebox{-0.25 cm}{\includegraphics{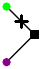}}} \Big)\Big)\Big) \\
		& \leq 3 W_p(k) \tripof \trip +  3p^2 \sum\Big( \mathrel{\raisebox{-0.25 cm}{\includegraphics{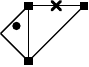}}}
				\Big( \sup_{\textcolor{darkorange}{\bullet}, \textcolor{blue}{\bullet}} p\sum \mathrel{\raisebox{-0.25 cm}{\includegraphics{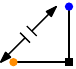}}} \Big)\Big) \\
		& \quad + 3p^3 \tripf(\orig) \sup_{\textcolor{darkorange}{\bullet}} \sum \mathrel{\raisebox{-0.25 cm}{\includegraphics{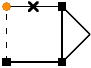}}} 
			 + 3 W_p(k) \tripof \tripf(\orig) \\
		& \leq 3 W_p(k) \tripf(\orig) \big( 3\tripof \big) + 3p^3 \tripf(\orig) \sup_{\textcolor{darkorange}{\bullet}} \sum \Big(\Big( \sup_{\textcolor{blue}{\bullet},\textcolor{green}{\bullet}}
				\sum \mathrel{\raisebox{-0.25 cm}{\includegraphics{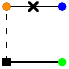}}} \Big)
				\mathrel{\raisebox{-0.25 cm}{\includegraphics{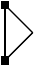}}} \Big) \\
		& \leq 3 W_p(k) \tripf(\orig) \big( 3\tripof + \trip \big).  }
The second term in~\eqref{eq:db:disp_n1_second_bound} is $p^2 (\jeq\ast\taupk\ast \taup)(\orig)$. Depicting the factor $\ttaup$ as a disrupted line $\mathrel{{\includegraphics{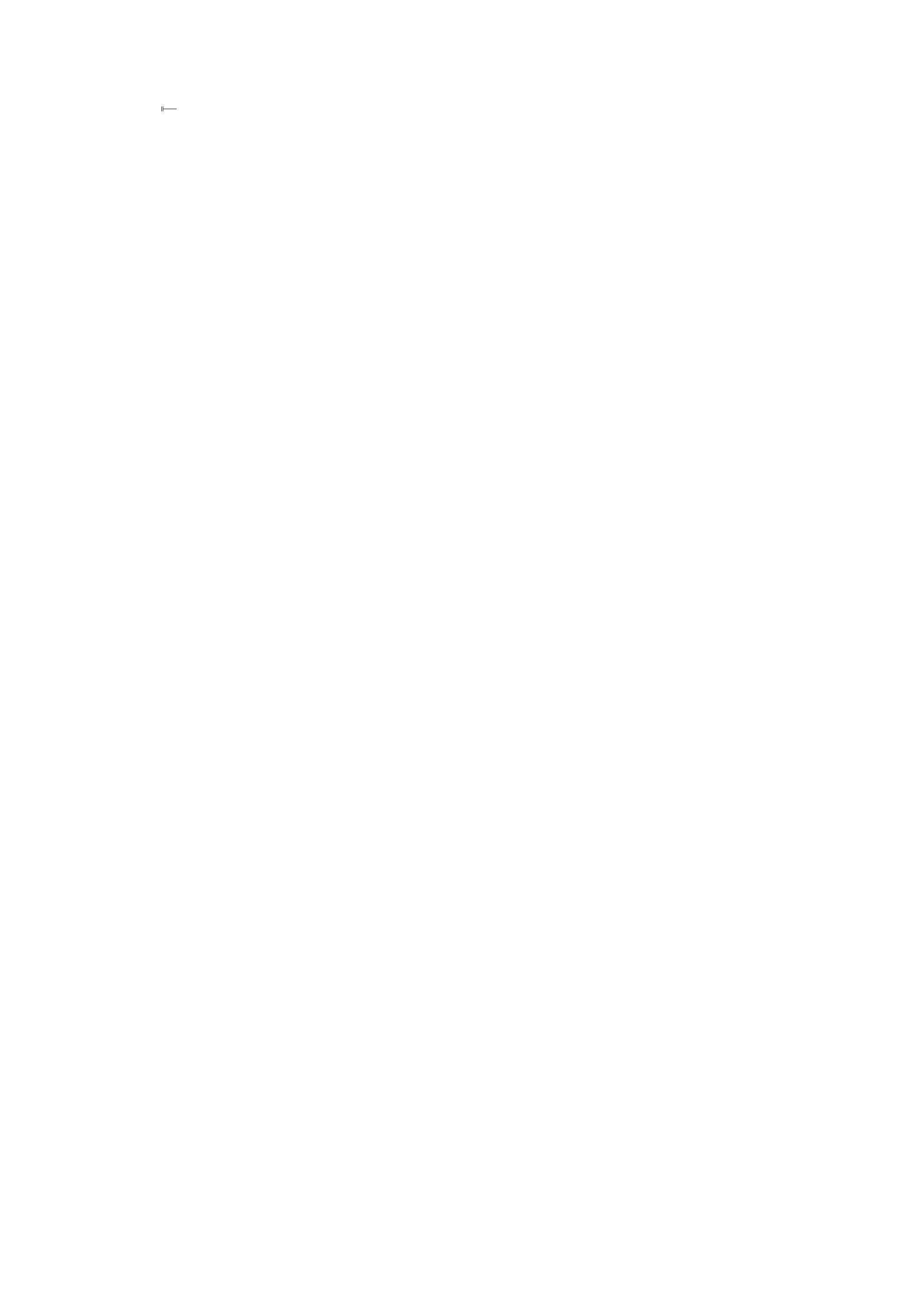}}}$, the third term in~\eqref{eq:db:disp_n1_second_bound} is
	\al{p^2 \sum_{w,u,x} & [1-\cos(k\cdot x)] \taupf(w) \ttaup(u) \taup(u-w) \taupo(x-w) \taup(x-u)  = p^2 \sum \mathrel{\raisebox{-0.25 cm}{\includegraphics{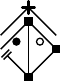}}} \\
		& \leq 2p^3 \sum \mathrel{\raisebox{-0.25 cm}{\includegraphics{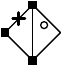}}}
			\ + 2p^2 \sum \mathrel{\raisebox{-0.25 cm}{\includegraphics{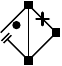}}} 
		 	\ \leq 2 W_p(k) \Big[ \tripo(\orig) + p \big((\delta_{\orig,\cdot} + p \taup)\ast\taup\ast \ttaup\big)(\orig) \Big] \\
		& \leq 2 W_p(k) \big[ \tripo + p (\taup\ast\ttaup)(\orig) + p^2 (\taup^{\ast 2}\ast\ttaup)(\orig) \big] \leq 2 W_p(k) \big[ \tripo + 2 \trip \big].}
In the above, we have used that $\ttaup(x) \leq \taup(x)$ as well as $\ttaup(x) \leq p (\jeq\ast\taup)(x) \leq p\taup^{\ast 2}(x)$.
\end{proof}

\section{Bootstrap analysis} \label{sec:bootstrap_analysis}
\subsection{Introduction of the bootstrap functions}\label{sec:boot:intro}
This section brings the previous results together to prove Proposition~\ref{thm:convergence_of_LE}, from which Theorem~\ref{thm:main_theorem_triangle_condition} follows with little extra effort. The remaining strategy of proof is standard and described in detail in~\cite{HeyHof17}. In short, it is the following: We introduce the bootstrap function $f$ in~\eqref{eq:boot:intro:f_functions_def}. In Section~\ref{sec:boot:consequences}, and in particular in Proposition~\ref{thm:convergence_of_LE}, we prove several bounds in terms of $f$, including bounds uniform in $p \in [0,p_c)$ under the additional assumption that $f$ is uniformly bounded.

In Section~\ref{sec:boot:bootstrap_argument}, we show that $f(0) \leq 3$ and that $f$ is continuous on $[0,p_c)$. Lastly, we show that on $[0,p_c)$, the bound $f\leq 4$ implies $f\leq 3$. This is called the improvement of the bounds, and it is shown by employing the implications from Section~\ref{sec:boot:consequences}. As a consequence of this, the results from Section~\ref{sec:boot:consequences} indeed hold uniformly in $p\in[0,p_c)$, and we may extend them to $p_c$ by a limiting argument.

Let us recall the notation $\taupk(x) = [1-\cos(k\cdot x)] \taup(x), \jek(x) = [1-\cos(k\cdot x)] \jeq(x)$. We extend this to $\connf_k(x) = [1-\cos(k\cdot x)] \connf(x)$. We note that $\chi(p)$ was defined as $\chi(p) = \E[|\C(\orig)|]$ and that $\chi(p) = 1 + p \sum_{x \in \Zd} \taup(x)$. We define
	\[ \lamp = 1 - \frac{1}{\chi(p)} = 1 - \frac{1}{1+p \ftau(0)} .\]
We define the bootstrap function $f= f_1 \vee f_2 \vee f_3$ with
	\eqq{f_1(p) = 2dp, \qquad f_2(p) = \sup_{k \in\fspace} \frac{p |\ftau(k)| }{ \fgmu(k)}, \qquad f_3(p) = \sup_{k,l \in\fspace} \frac{p|\ftaupk(l)| }{ \ulam(k,l)}, \label{eq:boot:intro:f_functions_def}}
where $\ulam$ is defined as
	\[\ulam(k,l) := 3000 [1-\fconnf(k)] \Big( \fgreenslam(l-k) \fgreenslam(l) + \fgreenslam(l)\fgreenslam(l+k) + \fgreenslam(l-k)\fgreenslam(l+k) \Big). \]
We note that $\ftaupk$ relates to $\Delta_k \ftau$, the discretized second derivative of $\ftau$, as follows:
	\[\Delta_k \ftau(l) := \ftau(l-k) + \ftau(l+k) - 2 \ftau(l) = -2 \ftaupk(l) .\]
The following result bounds the discretized second derivative of the random walk Green's function:
\begin{lemma}[Bounds on $\Delta_k$, \cite{Sla06}, Lemma 5.7] \label{lem:bootstrap:Delta_k_Ulam_bound}
Let $a(x) = a(-x)$ for all $x\in\Zd$, set $\widehat A(k) = (1-\widehat a(k))^{-1}$, and let $k,l\in\fspace$. Then
	\al{|\Delta_k \widehat A(l) | \leq \big(\widehat{|a|} (0) - \widehat{|a|}(k) \big) \times & \Big( \big[ \widehat A(l-k) + \widehat A(l+k) \big] \widehat A(l)  \\
		& \quad + 8 \widehat A(l-k) \widehat A(l+k)  \widehat A(l) \big[ \widehat{|a|}(0) - \widehat{|a|}(l) \big]\Big). }
In particular,
	\[ |\Delta_k \fgreenslam(l)| \leq [1-\fconnf(k)] \Big( \fgreenslam(l)\fgreenslam(l-k) + \fgreenslam(l)\fgreenslam(l+k) + 8 \fgreenslam(l-k)\fgreenslam(l+k) \Big) .\]
\end{lemma}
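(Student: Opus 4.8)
The plan is to prove this as a consequence of the general statement for symmetric functions $a$, so I will focus on the first claim (about $\Delta_k \widehat A(l)$) and then specialize. Fix $k,l\in\fspace$ and write $\widehat A(m) = (1-\widehat a(m))^{-1}$. The starting point is the algebraic identity obtained by writing each factor over a common framework: since $\Delta_k \widehat A(l) = \widehat A(l-k) + \widehat A(l+k) - 2\widehat A(l)$, I would first express this in terms of $\Delta_k \widehat a$ via the resolvent-type identity $\widehat A(m) - \widehat A(l) = \widehat A(m)\widehat A(l)\,(\widehat a(m) - \widehat a(l))$. Applying this to $m = l\pm k$ and adding gives
\[
\Delta_k \widehat A(l) = \widehat A(l)\Big[ \widehat A(l-k)\big(\widehat a(l-k) - \widehat a(l)\big) + \widehat A(l+k)\big(\widehat a(l+k) - \widehat a(l)\big) \Big].
\]
The bracket is almost $\widehat A(l) \cdot$ (something like $\Delta_k \widehat a(l)$) but with the two $\widehat A$-factors differing. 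I would symmetrize: replace $\widehat A(l-k)$ and $\widehat A(l+k)$ each by their average plus a correction, so that the leading term becomes $\tfrac12[\widehat A(l-k)+\widehat A(l+k)]\,\Delta_k\widehat a(l)$ (using $\widehat a(l-k)+\widehat a(l+k) - 2\widehat a(l) = \Delta_k \widehat a(l)$, and $\Delta_k\widehat a(l)$ is real and bounded by $2(\widehat{|a|}(0) - \widehat{|a|}(k))$ by a cosine-split argument since $a$ is symmetric), and the correction term involves $\widehat A(l-k) - \widehat A(l+k)$ times $\widehat a(l-k) - \widehat a(l)$.

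The correction term is where the factor $8$ and the extra $[\widehat{|a|}(0) - \widehat{|a|}(l)]$ come from. Here I would again use the resolvent identity on $\widehat A(l-k) - \widehat A(l+k) = \widehat A(l-k)\widehat A(l+k)(\widehat a(l-k) - \widehat a(l+k))$, and then bound the scalar factors $|\widehat a(l\pm k) - \widehat a(l)|$ and $|\widehat a(l-k) - \widehat a(l+k)|$ using the triangle inequality together with Lemma~\ref{lem:cosinesplitlemma} (cosine-split): each difference of the form $\widehat a(m) - \widehat a(m')$ is controlled by a sum of $1-\cos$ terms in the arguments $k$ and $l$, which in turn are bounded by $\widehat{|a|}(0) - \widehat{|a|}(k)$ and $\widehat{|a|}(0) - \widehat{|a|}(l)$ respectively. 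Carefully counting the constants produced by the repeated use of $1-\cos(s+t)\le 2[(1-\cos s)+(1-\cos t)]$ yields the constant $8$ in front of the $\widehat A(l-k)\widehat A(l+k)\widehat A(l)[\widehat{|a|}(0)-\widehat{|a|}(l)]$ term; I would keep the bookkeeping explicit since that is the only subtle point.

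For the "in particular" statement I would simply set $a = \lambda D$, so $\widehat a(k) = \lambda\widehat D(k)$, $\widehat A = \fgreenslam$, and $\widehat{|a|}(k) = \lambda\widehat D(k) = \widehat a(k)$ since $D\ge 0$. Then $\widehat{|a|}(0) - \widehat{|a|}(k) = \lambda(1 - \widehat D(k)) \le 1 - \widehat D(k) = 1 - \fconnf(k)$, and likewise $\widehat{|a|}(0) - \widehat{|a|}(l) \le 1$; absorbing the latter trivial bound (and noting $\fgreenslam\ge$ a constant is not even needed here) into the first two terms and keeping it only where it genuinely helps gives exactly the stated inequality with the $8$. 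The main obstacle is purely the constant-tracking in the cosine-split steps of the correction term; everything else is a routine chain of resolvent identities. Since the lemma is quoted from \cite{Sla06}, I would in fact just cite it and present the above only as the guiding computation, remarking that the specialization to $a=\lambda D$ is immediate.
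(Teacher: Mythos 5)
The paper itself gives no proof of this lemma --- it is quoted verbatim from \cite{Sla06} --- so your bottom line (cite it) coincides with what the paper does, and your resolvent-plus-symmetrization skeleton is indeed the right outline: writing $\widehat A(l\pm k)-\widehat A(l)=\widehat A(l\pm k)\widehat A(l)[\widehat a(l\pm k)-\widehat a(l)]$ and splitting $\widehat A(l\pm k)$ into average plus difference yields
\[
\Delta_k\widehat A(l)=\tfrac12\widehat A(l)\big[\widehat A(l+k)+\widehat A(l-k)\big]\Delta_k\widehat a(l)+\tfrac12\widehat A(l)\widehat A(l+k)\widehat A(l-k)\big[\widehat a(l+k)-\widehat a(l-k)\big]^2 ,
\]
and the first term is handled exactly as you say. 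However, the step you dismiss as mere constant bookkeeping hides the one genuinely nontrivial point, and as sketched it fails. For the second term you need
\[
\big[\widehat a(l+k)-\widehat a(l-k)\big]^2\le 16\,\big(\widehat{|a|}(0)-\widehat{|a|}(k)\big)\big(\widehat{|a|}(0)-\widehat{|a|}(l)\big),
\]
i.e.\ a \emph{product} of the $k$- and $l$-displacements. Triangle inequality plus the cosine-split lemma can only bound $|\widehat a(m)-\widehat a(m')|$ by a \emph{sum} of $(\widehat{|a|}(0)-\widehat{|a|}(k))$- and $(\widehat{|a|}(0)-\widehat{|a|}(l))$-type terms; squaring (or multiplying two) such sums leaves a contribution $(\widehat{|a|}(0)-\widehat{|a|}(l))^2$ carrying no factor of $\widehat{|a|}(0)-\widehat{|a|}(k)$, which cannot be absorbed into the claimed bound. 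The mechanism that actually produces the product is the identity $\widehat a(l+k)-\widehat a(l-k)=-2\sum_x a(x)\sin(l\cdot x)\sin(k\cdot x)$, followed by Cauchy--Schwarz and $\sin^2 t\le 2(1-\cos t)$; this is also where $8=\tfrac12\cdot16$ comes from.

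The specialization to $a=\lambda D$ also has a gap. In the displayed consequence the factor $\widehat A(l)\,[\widehat{|a|}(0)-\widehat{|a|}(l)]$ has disappeared from the third term, and your justification ($\widehat{|a|}(0)-\widehat{|a|}(l)\le1$, with $\fgreenslam(l)$ ignored) does not work: $\lambda(1-\fconnf(l))$ can be as large as $2$, and $\fgreenslam(l)$ is unbounded near $l=0$ as $\lambda\to1$, so neither factor is controlled separately. What is true is the combined estimate
\[
\fgreenslam(l)\,\big[\widehat{|a|}(0)-\widehat{|a|}(l)\big]=\frac{\lambda\,(1-\fconnf(l))}{1-\lambda\fconnf(l)}\le 1,
\]
since $\lambda-\lambda\fconnf(l)\le 1-\lambda\fconnf(l)$ for $\lambda\le1$. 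With these two repairs your computation matches the proof in \cite{Sla06}; simply citing the lemma, as the paper does, remains the cleanest option.
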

A natural first guess for $f_3$ might have been $\sup p |\Delta_k\ftau(l)| / | \Delta_k \fgmu(l)|$. However, $\Delta_k \fgmu(l)$ may have roots, which makes this guess an inconvenient choice for $f_3$. In contrast, $\ulam(k,l) >0$ for $k \neq 0$. Hence, the bound in Lemma~\ref{lem:bootstrap:Delta_k_Ulam_bound} supports the idea that $f_3$ is a reasonable definition.

\subsection{\col{Consequences of the diagrammatic bounds}} \label{sec:boot:consequences} The main result of this section, and a crucial result in this paper, is Proposition~\ref{thm:convergence_of_LE}. Proposition~\ref{thm:convergence_of_LE} proves (in high dimension) the convergence of the lace expansion derived in Proposition~\ref{thm:lace_expansion} by giving bounds on the lace-expansion coefficients. Under the additional assumption that $f\leq 4$ on $[0,p_c)$, these bounds are uniform in $p\in [0,p_c)$.

\begin{prop}[Convergence of the lace expansion and Ornstein-Zernike equation] \label{thm:convergence_of_LE} \ 
\begin{enumerate}
\item \label{firstassertion} Let $n\in\N_0$ and $p \in [0,p_c)$. Then there is $d_0 \geq 6$ and a constant $c_f=c(f(p))$ (increasing in $f$ and independent of $d$) such that, for all $d > d_0$,
	\algn{\sum_{x\in\Zd} p | \Pi_{p,n}(x)| \leq c_f/d, & \qquad \sum_{x\in\Zd} [1-\cos(k\cdot x)\big] p | \Pi_{p,n}(x)| \leq [1-\fconnf(k)] c_f/d, \label{eq:boot:convergence_of_Pi_M} \\
		\sup_{x\in\Zd} & p \sum_{m=0}^{n} | \Pi_p^{(m)}(x)|  \leq c_f, \label{eq:boot:convergence_of_Pi_n_unsummed}}
and
	\eqq{ \sum_{x\in\Zd} |R_{p,n}(x)| \leq c_f (c_f/d)^n \ftau(0). \label{eq:boot:conseq:R_bounds}}
Consequently, $\Pi_p :=\lim_{n\to\infty} \Pi_{p,n}$ is well defined and $\taup$ satisfies the Ornstein-Zernike equation (OZE), taking the form
	\eqq{ \taup(x) = \jeq(x) + \Pi_p(x) + p\big((\jeq+\Pi_p)\ast\taup\big)(x). \label{eq:boot:conseq:OZE}}
\item \label{secondassertion} 
Let $f \leq 4$ on $[0,p_c)$. Then there is a constant $c$ and $d_0 \geq 6$ such that the bounds~\eqref{eq:boot:convergence_of_Pi_M},~\eqref{eq:boot:convergence_of_Pi_n_unsummed},~\eqref{eq:boot:conseq:R_bounds} hold for all $d > d_0$ with $c_f$ replaced by $c$ for all $p \in [0,p_c)$. Moreover, the OZE~\eqref{eq:boot:conseq:OZE} holds.
\end{enumerate}
\end{prop}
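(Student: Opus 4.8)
The plan is to feed the diagrammatic bounds of Section~\ref{sec:diag_bounds} into elementary geometric-series estimates; the only non-routine ingredient is that all the triangle-type quantities appearing there become small once the bootstrap function $f$ is controlled.

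\emph{Step 1: the diagrams are small.} The technical core — and the main obstacle — is the following estimate, which I would isolate as a preliminary lemma: \emph{if $f(p)\le B$ for some $B<\infty$, then there is a constant $c_B$, increasing in $B$ and independent of $d$ and $p$, such that for $d$ sufficiently large (depending on $B$)}
\[ \trip,\ \tripo,\ \tripof,\ p^3(\jeq^{\ast 2}\ast\taup^{\ast 2})(\orig)\ \le\ c_B/d,\qquad \tripf(\orig),\ \tripoff\ \le\ c_B, \]
\[ W_p(k),\ H_p(k),\ \text{and the cosine-weighted quantities in Propositions~\ref{thm:db:bounds_for_n0} and~\ref{thm:db:displacement_thm_n1}}\ \le\ c_B[1-\fconnf(k)]/d. \]
These are proved by passing to Fourier space, bounding the two-point function through $p|\ftau(k)|\le f_2(p)\fgmu(k)$ and its cosine-weighted analogue through $p|\ftaupk(l)|\le f_3(p)\ulam(k,l)$ (the definitions of $f_2,f_3$), using the splitting $\taup=\jeq+\ttaup$ together with $\ttaup\le p(\jeq\ast\taup)$ and $2dp=f_1(p)\le B$ to trade every surplus two-point factor for a factor $\widehat\jeq=2d\fconnf$ at bounded cost, and finally applying the random walk bounds of Propositions~\ref{thm:random_walk_triangle} and~\ref{thm:random_walk_triangle_related} (and Lemma~\ref{lem:bootstrap:Delta_k_Ulam_bound} for the cosine-weighted case), where each pair of $\fconnf$-factors yields a factor $d^{-1}$. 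This is precisely the delicate bookkeeping of ``the right number of factors of $p$'' advertised in the introduction; it is routine but intricate and follows \cite{HeyHof17,BorChaHofSlaSpe05} closely.

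\emph{Step 2: summing the expansion.} Granting Step~1 — with $B=f(p)<\infty$ in Part~1, finiteness of $\chi(p)$ and exponential decay of $\taup$ below $p_c$ giving $f_2(p),f_3(p)<\infty$ since $\fgmu$ and, for $k\ne 0$, $\ulam(k,\cdot)$ are bounded below — one has $T_p=(1+\trip)\tripof+\trip\tripoff\le c_B/d<1$ for $d>d_0$. By Proposition~\ref{thm:db:main_thm}, $p\sum_x\Pi_p^{(n)}(x)\le\tripf(\orig)(T_p)^n$ for $n\ge 1$, while by Proposition~\ref{thm:db:bounds_for_n0} and Step~1, $p\sum_x\Pi_p^{(0)}(x)\le p^3(\jeq^{\ast 2}\ast\taup^{\ast 2})(\orig)\le c_B/d$; summing,
\[ \sum_{x}p\,|\Pi_{p,n}(x)|\ \le\ \sum_{m=0}^{n}p\sum_{x}\Pi_p^{(m)}(x)\ \le\ \frac{c_B}{d}+\tripf(\orig)\,\frac{T_p}{1-T_p}\ \le\ \frac{c_f}{d}. \]
The cosine-weighted estimate in~\eqref{eq:boot:convergence_of_Pi_M} is obtained identically, using Propositions~\ref{thm:db:displacement_thm} and~\ref{thm:db:displacement_thm_n1} for $n\ge 1$ (the series $\sum_{n\ge 2}(n+1)(T_p)^{1\vee(n-2)}$ converges) together with the $W_p$- and $H_p$-bounds of Step~1, and Proposition~\ref{thm:db:bounds_for_n0} for $n=0$. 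Bound~\eqref{eq:boot:convergence_of_Pi_n_unsummed} follows from Corollary~\ref{cor:db:Pi_pn_unsummed_bound}, which gives $\sup_x\Pi_p^{(n)}(x)\le\tripf(\orig)(1+\tripof)(T_p)^{n-1}$, and the crude $n=0$ bound $\sup_x p\,\Pi_p^{(0)}(x)\le p(2dp)^2\le c_f$, again after summing over $m\le n$.

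\emph{Step 3: the remainder and the OZE.} Conditioning on $\omega_0,\dots,\omega_n$ and using that $\omega_{n+1}$ is independent with $\pp(u_n\throughconn{\C_n}x)\le\pp(u_n\longleftrightarrow x)=\taup(x-u_n)$, summation over $x$ produces a factor $\sum_y\taup(y)=\ftau(0)$; combined with the bound on $p\sum_x\Pi_p^{(n)}(x)$ from Step~2 this yields $\sum_x|R_{p,n}(x)|\le c_f(c_f/d)^{\,n}\ftau(0)$. For $d>d_0$ this tends to $0$ as $n\to\infty$, so $(\Pi_{p,n})_n$ is Cauchy in $\ell^1(\Zd)$; hence $\Pi_p:=\lim_n\Pi_{p,n}$ exists and inherits the bounds of Step~2. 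Letting $n\to\infty$ in Proposition~\ref{thm:lace_expansion} — using $R_{p,n}\to0$ and $\ell^1$-convergence of $\Pi_{p,n}$, so that $p(\jeq+\Pi_{p,n})\ast\taup\to p(\jeq+\Pi_p)\ast\taup$ pointwise — yields the Ornstein--Zernike equation~\eqref{eq:boot:conseq:OZE}, which proves Part~1. Part~2 follows by running Steps~1--3 with $B=4$ held fixed: then $c_f$ is a single universal constant $c$, $d_0$ may be taken independent of $p$, and~\eqref{eq:boot:convergence_of_Pi_M},~\eqref{eq:boot:convergence_of_Pi_n_unsummed}, the $R$-bound and the OZE all hold uniformly in $p\in[0,p_c)$.
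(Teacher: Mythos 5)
Your proposal follows essentially the same route as the paper's proof: your Step~1 corresponds to Lemmas~\ref{lem:bootstrap:bounds_on_V}--\ref{lem:bootstrap:bounds_on_H} (Fourier/H\"older estimates combined with the random-walk integrals of Propositions~\ref{thm:random_walk_triangle}--\ref{thm:random_walk_triangle_related} and the bounds $p|\ftau|\le f_2\fgmu$, $p|\ftaupk|\le f_3\ulam$, $2dp=f_1$), your Step~2 to the geometric-series summation of the diagrammatic bounds of Section~\ref{sec:diag_bounds}, and your Step~3 to the paper's treatment of $R_{p,n}$ and the passage to the limit. Your explicit remark that $f_2(p),f_3(p)<\infty$ for fixed $p<p_c$ (via $\chi(p)<\infty$ and exponential decay of $\taup$) is a useful point the paper leaves implicit.

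One correction to Step~1: the claim $W_p(k)\le c_B[1-\fconnf(k)]/d$ is false. Already the single summand $p\jek(x)=2dp\,\connf_k(x)\le f_1(p)[1-\fconnf(k)]$ contributing to $W_p(x;k)$ is of order $[1-\fconnf(k)]$, not $[1-\fconnf(k)]/d$; the correct statement (Lemma~\ref{lem:bootstrap:bounds_on_W}) is $W_p(k)\le c_f[1-\fconnf(k)]$. This does not sink your Step~2, because in Propositions~\ref{thm:db:displacement_thm} and~\ref{thm:db:displacement_thm_n1} every occurrence of $W_p(k)$ is multiplied by a triangle quantity of order $1/d$ (through $T_p^{1\vee(n-2)}$ resp.\ $\tripf(\orig)(\tripof+\trip)+\tripo+\trip$), and that is where the factor $1/d$ in the displacement bound actually comes from. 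Relatedly, the leftover term $p^2(\jeq\ast\taupk\ast\taup)(\orig)$ in Proposition~\ref{thm:db:displacement_thm_n1} is \emph{not} covered by the generic $m+n\le 2$ case of Lemma~\ref{lem:bootstrap:bounds_on_V} (which only yields order $[1-\fconnf(k)]$); to get the required $[1-\fconnf(k)]c_f/d$ one needs the parity cancellation $\jeq^{\ast 3}(\orig)=0$ after one application of $\taup\le\jeq+p(\jeq\ast\taup)$, as in Lemma~\ref{lem:bootstrap:bounds_on_Pi0}. Your sketch glosses over this; with these two points repaired the argument is complete and matches the paper's.
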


A standard assumption in the lace expansion literature is a bound on $f(p)$ (often $f(p)\le 4$ as in part {\em\ref{secondassertion}}\ of the proposition), and then it is shown that this implies $f(p) \leq 3$. This is part of the so-called bootstrap argument. 

We first formulate part~{\em\ref{firstassertion}}~of Proposition~\ref{thm:convergence_of_LE} (of which part~\emph{\ref{secondassertion}} follows straightforwardly) demonstrating that the bootstrap argument is not necessary to obtain convergence of the lace expansion and thus establish the OZE for a \emph{fixed} value $p<p_c$ provided that the dimension is large enough. However, without uniformity in $p$, $d_0$ might depend on $p$ and diverge as $p \nearrow p_c$. Hence, this approach alone does not allow to extend the results to $p_c$. 

It is at this point that the bootstrap argument (and thus Section~\ref{sec:boot:bootstrap_argument}) comes into play. In Section~\ref{sec:boot:bootstrap_argument}, we indeed prove that $f \leq 4$ and so the second part of Proposition~\ref{thm:convergence_of_LE} applies. This is instrumental in proving Theorem \ref{thm:main_theorem_triangle_condition}. We get the following corollary:

\begin{corollary}[OZE at $p_c$] \label{cor:boot:OZE_at_critical_point}
There is $d_0$ such that for all $d>d_0$, the limit $\Pi_{p_c} = \lim_{p \nearrow p_c} \Pi_p$ exists and is given by $\Pi_{p_c}= \sum_{n \geq 0} (-1)^n \Pi_{p_c}^{(n)}$, where $\Pi_{p_c}^{(n)}$ is the extension of Definition~\ref{def:le:lace_expansion_coefficients} at $p=p_c$. Consequently, the bounds in Proposition~\ref{thm:convergence_of_LE} and the OZE~\eqref{eq:boot:conseq:OZE} extend to $p_c$.
\end{corollary}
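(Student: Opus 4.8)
The plan is to combine the uniform-in-$p$ bounds from part~\emph{\ref{secondassertion}} of Proposition~\ref{thm:convergence_of_LE} with a continuity/monotonicity argument in $p$ to pass to the limit $p \nearrow p_c$. The starting point is that Section~\ref{sec:boot:bootstrap_argument} establishes $f \leq 4$ on $[0,p_c)$ (this is what the bootstrap argument delivers), so part~\emph{\ref{secondassertion}} applies: there is a dimension threshold $d_0$ and a constant $c$ (both independent of $p$) such that for $d > d_0$ the bounds~\eqref{eq:boot:convergence_of_Pi_M},~\eqref{eq:boot:convergence_of_Pi_n_unsummed},~\eqref{eq:boot:conseq:R_bounds} hold for all $p \in [0,p_c)$ with $c_f$ replaced by $c$. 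In particular $\sum_x p|\Pi_p^{(n)}(x)| \leq c(c/d)^{n-1}$-type geometric bounds hold uniformly, which will be the engine for interchanging limits.

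First I would fix $n$ and show that $\lim_{p\nearrow p_c}\Pi_p^{(n)}(x)$ exists for each $x$ and equals $\Pi_{p_c}^{(n)}(x)$, the natural extension of Definition~\ref{def:le:lace_expansion_coefficients} to $p=p_c$. Each $\Pi_p^{(n)}(x)$ is a finite-order polynomial in $p$ times a sum of probabilities of increasing events on finitely many independent configurations; the probabilities $\pp(\cdot)$ of the relevant (increasing) events are continuous — indeed left-continuous — in $p$ at $p_c$ by monotone convergence applied to the cylinder approximations, and the prefactor $p^m$ is manifestly continuous, so $\Pi_p^{(n)}(x) \to \Pi_{p_c}^{(n)}(x)$. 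The uniform diagrammatic bound from Proposition~\ref{thm:db:main_thm} (via Corollary~\ref{cor:db:Pi_pn_unsummed_bound} and its displacement analogue) gives a summable-in-$x$ majorant that is uniform in $p$ near $p_c$, so by dominated convergence $\sum_x \Pi_p^{(n)}(x) \to \sum_x \Pi_{p_c}^{(n)}(x)$, and likewise for the cosine-weighted sums; moreover $\sum_x |\Pi_{p_c}^{(n)}(x)| \leq c(c/d)^{\,1\vee(n-1)}$ inherits the uniform bound. Summing the alternating series over $n$ (the bound $c(c/d)^{n}$ makes it absolutely convergent once $d_0$ is large enough that $c/d < 1$), a second dominated-convergence argument in $n$ yields that $\Pi_{p_c} := \lim_{p\nearrow p_c}\Pi_p = \sum_{n\geq 0}(-1)^n\Pi_{p_c}^{(n)}$ is well defined and satisfies the bounds~\eqref{eq:boot:convergence_of_Pi_M},~\eqref{eq:boot:convergence_of_Pi_n_unsummed} at $p_c$, and similarly $R_{p_c,n} \to 0$ as $n\to\infty$ using~\eqref{eq:boot:conseq:R_bounds} together with the fact that $\ftau(0) = \chi(p_c)-1$ might be infinite — so here I would instead take $n\to\infty$ first at fixed $p<p_c$ to get the OZE~\eqref{eq:boot:conseq:OZE} on $[0,p_c)$, and only then let $p\nearrow p_c$.

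To finish, I would pass the Ornstein–Zernike identity~\eqref{eq:boot:conseq:OZE} itself to $p_c$. For fixed $x$, the left side $\taup(x)\to\tau_{p_c}(x)$ by left-continuity of $p\mapsto\pp(\orig\leftrightarrow x)$, the term $\jeq(x)+\Pi_p(x)\to\jeq(x)+\Pi_{p_c}(x)$ by the above, and the convolution $p((\jeq+\Pi_p)\ast\taup)(x)$ converges to $p_c((\jeq+\Pi_{p_c})\ast\tau_{p_c})(x)$ by dominated convergence — the summand at $y$ is bounded by $p(\jeq(y)+|\Pi_p(y)|)\taup(x-y) \leq (\jeq(y)+|\Pi_p(y)|)\tau_{p_c}(x-y)$, and $\sum_y(\jeq(y)+\sup_{p<p_c}|\Pi_p(y)|)\tau_{p_c}(x-y)$ is finite because the infra-red bound / triangle condition part of Theorem~\ref{thm:main_theorem_triangle_condition} (which at this stage we have at $p_c$ from $f\leq 4$) controls $\tau_{p_c}$ in the needed convolutions. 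The main obstacle I anticipate is precisely this last point: justifying the interchange of limit and infinite sum in the convolution at $p_c$ requires that $\tau_{p_c}$ and the relevant triangle-type quantities are already finite at $p_c$, which is not circular only because the uniform bounds from part~\emph{\ref{secondassertion}} (consequence of $f\leq 4$) are available independently of the OZE at $p_c$; care is needed to invoke the pieces in the right order, and to handle the $\ftau(0)=\infty$ possibility in the $R$-bound by deferring the $n\to\infty$ limit.
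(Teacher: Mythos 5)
There is a genuine gap at the central step of your argument: the claim that $\Pi_p^{(n)}(x) \to \Pi_{p_c}^{(n)}(x)$ because the relevant events are ``increasing'' and their probabilities are left-continuous by monotone convergence. For $n\geq 1$ the events defining $\Pi_p^{(n)}$ are \emph{not} increasing: the building block $E'(v,u;A)$ contains $\{v \throughconn{A} u\}$, which involves the negated connectivity $\{\noffconn{v}{u}{\thinn{A}}\}$, as well as the condition $\{\nexists u'\in\piv{v,u}: v\throughconn{A} u'\}$, and pivotality is itself non-monotone. Left-continuity of $p\mapsto\pp(A)$ at $p_c$ is automatic only for increasing events that are countable increasing unions of cylinder events (like $\{\orig\longleftrightarrow x\}$); for the non-monotone, non-local events here it is exactly the delicate point, and asserting it begs the question (compare: left-continuity of $\theta$ at $p_c$ is a hard open problem in general). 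The paper's route supplies the missing ingredient in a specific order: first the uniform triangle bounds plus Fatou and pointwise convergence of $\taup(x)$ give $\triangle_{p_c}<\infty$, hence $\theta(p_c)=0$; then, under the standard monotone coupling, the a.s.\ finiteness of clusters at $p_c$ implies that the cluster of the origin (and the clusters entering the $E'$ events, which are determined by finitely many sites once the clusters are finite) stabilizes as $p\nearrow p_c$, which is what yields convergence of the coefficient probabilities. Your proposal never establishes $\theta(p_c)=0$ and never uses it, so the continuity step is unsupported.

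The rest of your architecture — invoking part~2 of Proposition~\ref{thm:convergence_of_LE} once the bootstrap gives $f\leq 4$, dominated convergence in $x$ and then in $n$ using the uniform geometric bounds, taking $n\to\infty$ at fixed $p<p_c$ before letting $p\nearrow p_c$ to avoid the $\ftau(0)=\infty$ issue in the remainder bound, and passing the convolution in the OZE to the limit — matches the intended argument (the paper outsources these details to \cite{HeyHofLasMat19}). One further small caution: in the last dominated-convergence step you need a $p$-independent summable majorant for $|\Pi_p(y)|\taup(x-y)$; the uniform bound $\sum_y p|\Pi_p(y)|\leq c/d$ controls each $p$ separately but does not immediately give $\sum_y \sup_{p}|\Pi_p(y)|<\infty$, so you should either argue via a generalized dominated convergence theorem (pointwise convergence plus convergence of the $\ell^1$-norms) or via uniform tail estimates.
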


Proposition~\ref{thm:convergence_of_LE} follows without too much effort as a consequence of Lemmas~\ref{lem:bootstrap:bounds_on_trip},~\ref{lem:bootstrap:bounds_on_W},~\ref{lem:bootstrap:bounds_on_Pi0}, and~\ref{lem:bootstrap:bounds_on_H}. \col{Part of the lace expansion's general strategy of proof in the bootstrap analysis is to use the Inverse Fourier Theorem to write
	\[ \triangle_p(x) = p^2 \int_{(-\pi,\pi]^d} \e^{-\i k \cdot x } \ftau(k)^3 \frac{\dd k}{(2\pi)^d} \]
and then to use an assumed bound on $f_2$ to replace $\ftau$ by $\fgmu$. For site percolation, this poses a problem, since we are missing one factor of $p$. Overcoming this issue poses a novelty of Section~\ref{sec:bootstrap_analysis}. The following two observations turn out to be helpful for this:}

\begin{observation}[Convolutions of $\jeq$]\label{obs:J_convolutions}
Let $m\in\N$ and $x\in\Zd$ with $m \geq |x|$. Then there is a constant $c=c(m,x)$ with $c \leq m!$ such that
	\[ \jeq^{\ast m}(x) = c \mathds 1_{\{m-|x| \text{ is even}\}} (2d)^{(m-|x|)/2 }.  \]
\end{observation}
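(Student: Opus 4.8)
The plan is to compute $\jeq^{\ast m}(x)$ combinatorially, using $\jeq(x) = \mathds 1_{\{|x|=1\}}$. By definition of the convolution, $\jeq^{\ast m}(x) = \sum \prod_{i=1}^m \jeq(y_i - y_{i-1})$ where the sum is over all $(y_0, \ldots, y_m)$ with $y_0 = \orig$, $y_m = x$, and each summand is $1$ precisely when $|y_i - y_{i-1}| = 1$ for all $i$. Thus $\jeq^{\ast m}(x)$ counts the number of nearest-neighbor walks of exactly $m$ steps from $\orig$ to $x$ in $\Zd$. This number is $0$ unless $x$ can be reached in $m$ steps, which (by a parity argument on $\sum_j (y_i)_j$ coordinate-wise, since each step flips the parity of exactly one coordinate) requires $m - |x|$ to be even, and also $m \geq |x|$, which is the hypothesis; this explains the indicator $\mathds 1_{\{m - |x|\text{ is even}\}}$.

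Next I would extract the factor $(2d)^{(m-|x|)/2}$ and identify $c = c(m,x)$ as the remaining combinatorial count. Write $m = |x| + 2r$ with $r = (m-|x|)/2 \geq 0$. Any $m$-step walk from $\orig$ to $x$ consists of $|x|$ ``net'' steps (forced in direction by the coordinates of $x$) together with $r$ pairs of cancelling back-and-forth steps; the freedom in choosing directions for the cancelling pairs, together with the freedom in interleaving all $m$ steps, contributes the factor $(2d)^r$ times a bounded combinatorial multiplicity. More precisely, one bounds the number of such walks by first choosing an ordered sequence of $m$ ``axis-direction'' labels in $\{\pm e_1, \ldots, \pm e_d\}$ whose vector sum is $x$: the count of walks is at most the number of functions from $\{1,\ldots,m\}$ to the $2d$ signed unit vectors, restricted to those summing to $x$, and for each of the $r$ ``excess'' steps there are at most $2d$ choices of axis while the positions and the remaining structure account for at most $m!/(r! \cdot \prod(\text{coordinate multiplicities}))$-type factors. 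The cleanest route is: the number of walks equals $(2d)^r$ times a quantity depending only on $m$ and $x$ (not on $d$) — this can be seen by a direct bijective/recursive argument — and that quantity is a count of certain labelled arrangements of $m$ objects, hence at most $m!$. This gives $c \leq m!$.

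The step I expect to be the main obstacle is pinning down the exact statement ``$\jeq^{\ast m}(x) = c(m,x)(2d)^{(m-|x|)/2}$'' with $c$ \emph{independent of $d$} — i.e.\ verifying that the dimension dependence of the walk count factors out cleanly as a pure power of $2d$ with a $d$-free coefficient. The simplest rigorous argument is by induction on $m$: the base cases $m = |x|$ (where the walk is monotone and $c$ counts interleavings of the coordinate moves, a multinomial coefficient $\leq m!$, with no factor of $2d$) and the recursion $\jeq^{\ast m}(x) = \sum_{|e|=1} \jeq^{\ast(m-1)}(x - e)$, splitting the $2d$ neighbors $e$ into those that decrease $|x|$ (finitely many, say $|x|$ of them, contributing no new $2d$) and those that increase it (the remaining $2d - |x|$, each contributing via the induction hypothesis a term with one higher power of $r$), and checking that the coefficients combine into a $d$-free constant bounded by $m!$. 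The parity indicator and the bound $c \leq m!$ are then immediate bookkeeping. I would present this induction compactly, since the paper only needs the stated form and the crude bound $m!$, not an exact formula for $c$.
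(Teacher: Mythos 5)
Your approach is the same as the paper's: interpret $\jeq^{\ast m}(x)$ as the number of $m$-step nearest-neighbour walks from $\orig$ to $x$, obtain the indicator from a parity argument, extract one factor of $2d$ for each of the $(m-|x|)/2$ ``free'' steps whose partners must compensate them, and absorb all orderings into a constant bounded by $m!$. The paper's own proof is exactly this informal count and goes no further.

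The step you flagged as the main obstacle --- establishing that the walk count is \emph{exactly} $(2d)^{(m-|x|)/2}$ times a constant independent of $d$ --- deserves a warning: the induction you sketch cannot close, because that exact factorization is false. For example, $\jeq^{\ast 4}(\orig)=12d^2-6d$ (there are $6d$ closed $4$-step walks confined to a single axis and $24\binom{d}{2}=12d(d-1)$ using two distinct axes), and $12d^2-6d$ is not a $d$-free multiple of $(2d)^2$; the ``constant'' here is $3-\tfrac{3}{2d}$. So the observation should be read as the combination of (i) $\jeq^{\ast m}(x)=0$ when $m-|x|$ is odd and (ii) the upper bound $\jeq^{\ast m}(x)\le m!\,(2d)^{(m-|x|)/2}$ otherwise, i.e.\ $c=c(m,x,d)\le m!$. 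That is precisely what your counting argument (choose the $r$ free steps, $2d$ options each; at most $m!$ interleavings) delivers, it is all the paper's proof establishes, and it is all that is ever used downstream (upper bounds in Lemmas~\ref{lem:bootstrap:bounds_on_V}--\ref{lem:bootstrap:bounds_on_H} and the vanishing of $\jeq^{\ast 3}(\orig)$ in Lemma~\ref{lem:bootstrap:bounds_on_Pi0}). Drop the proposed induction on exact $d$-independence and your proof is complete for everything that is needed.
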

\begin{proof}
This is an elementary matter of counting the number of $m$-step walks from $\orig$ to $x$. If $m-|x|$ is odd, then there is no way of getting from $\orig$ to $x$ in $m$ steps.

So assume that $m-|x|$ is even. To get from $\orig$ to $x$, $|x|$ steps must be chosen to reach $x$. Only taking these $|x|$ steps (in any order) would amount to a shortest $\orig$-$x$-path. Out of the remaining steps, half can be chosen freely (each producing a factor of $2d$), and the other half must compensate them. In counting the different walks, we have to respect the at most $m!$ unique ways of ordering the steps.
\end{proof}
We remark that this also shows that the maximum is attained for $x=\orig$ when $m$ is even and for $x$ being a neighbor of $\orig$ when $m$ is odd.

\begin{observation}[Elementary bounds on $\taup^{\ast n}$] \label{obs:tau_J_extraction}
Let $n, m\in\N$. Then there is $c=c(m,n)$ such that, for all $p\in[0,1]$ and $x \in\Zd$,
	\[ \taup^{\ast n}(x) \leq c \sum_{l=0}^{m-n} p^{l} \jeq^{\ast (l+n)}(x) + c \sum_{j=1 \vee (n-m)}^{n} p^{m+j-n}(\jeq^{\ast m} \ast \taup^{\ast j})(x), \]
where we use the convention that $\sum_{l=0}^{m-n}$ vanishes for $n>m$.
\end{observation}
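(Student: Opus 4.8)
The plan is to prove Observation~\ref{obs:tau_J_extraction} by induction on $n$, using the lace-expansion identity from Proposition~\ref{thm:lace_expansion} (or rather a crude version of it) to peel off one $\taup$ at a time and convert it into a factor of $\jeq$ plus a remainder that carries an explicit power of $p$. The key structural input is the OZE-type bound $\taup(x) \leq \jeq(x) + p(\jeq \ast \taup)(x) + (\text{lace terms})$; but since we only want a bound valid uniformly in $p\in[0,1]$ with $d$-independent constants, it is cleaner to start from the elementary inequality $\taup(x) \leq \jeq(x) + p\,(\jeq\ast\taup)(x) + \text{something}$. Actually the cleanest route avoids the lace expansion entirely: a connection $\orig\longleftrightarrow x$ either uses a single edge (contributing $\jeq(x)$) or has a first occupied site $y$ adjacent to $\orig$ with $y\longleftrightarrow x$, so that
\[ \taup(x) \leq \jeq(x) + p\sum_{y}\jeq(y)\taup(x-y) = \jeq(x) + p(\jeq\ast\taup)(x). \]
This is the base-level recursion I would iterate.

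\textbf{Step 1 (the case $n=1$).} Iterating the recursion $\taup \leq \jeq + p(\jeq\ast\taup)$ exactly $m$ times yields
\[ \taup(x) \leq \sum_{l=0}^{m-1} p^{l}\,\jeq^{\ast(l+1)}(x) + p^{m}(\jeq^{\ast m}\ast\taup)(x), \]
which is precisely the claimed bound for $n=1$ (here the second sum in the statement has only the single term $j=1$, since $1\vee(1-m)=1$). The constant $c=c(m,1)$ can be taken to be $1$ for this step.

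\textbf{Step 2 (induction on $n$).} Suppose the bound holds for $n$; write $\taup^{\ast(n+1)} = \taup^{\ast n}\ast\taup$ and insert the inductive bound for $\taup^{\ast n}$ and the $n=1$ bound for $\taup$. Each product of a $\jeq$-convolution term with a $\jeq$-convolution term is again a $\jeq$-convolution (with the exponents of $p$ and the convolution powers adding), and each product involving a $\taup^{\ast j}$-tail stays of the form $p^{\,\cdot}(\jeq^{\ast m}\ast\taup^{\ast j'})$ with $1\le j'\le n+1$. One then has to check the bookkeeping: that every resulting term has an exponent of $p$ at least as large as what is asserted (i.e.\ $l$ for a $\jeq^{\ast(l+n+1)}$ term with $0\le l\le m-n-1$, and $m+j-(n+1)$ for a $(\jeq^{\ast m}\ast\taup^{\ast j})$ term with $1\vee(n+1-m)\le j\le n+1$), collapsing the higher-order surplus terms into the tail using $\jeq^{\ast a}(x)\le c(a)\,\jeq^{\ast m}(x)\cdot(2d)^{(a-m)/2}$-type estimates from Observation~\ref{obs:J_convolutions} when $a>m$, or absorbing them back by the trivial bound $\jeq^{\ast a}\ast\taup^{\ast j}\le \jeq^{\ast m}\ast\taup^{\ast j}\cdot(\text{const})$ when $a\ge m$. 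The constant $c(m,n+1)$ is obtained from $c(m,n)$ and $c(m,1)$ by a finite (dimension-independent) multiplicative factor, and an easy bound $c(m,n)\le$ (something like $2^{mn}$) keeps it uniform in $d$.

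\textbf{Main obstacle.} The genuine difficulty is not the recursion itself but the \emph{accounting of $p$-powers and convolution orders} when $n\ge 2$: cross terms of the form $\jeq^{\ast(l+n)}\ast\jeq^{\ast(l'+1)} = \jeq^{\ast(l+l'+n+1)}$ can push the convolution order above $m$, at which point they no longer fit the template ``$\sum_{l=0}^{m-n-1} p^l\jeq^{\ast(l+n+1)}$'' and must instead be reclassified into the $\taup$-tail family or controlled via Observation~\ref{obs:J_convolutions}. Getting the index ranges (the $1\vee(n-m)$ lower limit, the vanishing of $\sum_{l=0}^{m-n}$ for $n>m$) to match exactly requires care, and this is where I would spend most of the write-up, whereas the probabilistic content is entirely contained in the one-line first-occupied-neighbour inequality.
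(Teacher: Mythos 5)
Your base recursion $\taup \leq \jeq + p(\jeq\ast\taup)$ is exactly the inequality the paper uses (its~\eqref{eq:boot:tau_bound_tau_tilde}), and your Step~1 for $n=1$ is correct. The gap is in Step~2. The paper does \emph{not} induct on $n$; it inducts on $m-n$, with base case $m\le n$ handled by the binomial expansion $\taup^{\ast n}\le \taup^{\ast(n-m)}\ast(\jeq+p\,\jeq\ast\taup)^{\ast m}=\sum_{l}\binom{m}{l}p^{l}(\jeq^{\ast m}\ast\taup^{\ast(n-m+l)})$, and an inductive step that peels off a \emph{single} factor of $\jeq$ at a time via $\jeq^{\ast j}\ast\taup^{\ast(n-j)}\le \jeq^{\ast(j+1)}\ast\taup^{\ast(n-j-1)}+p\,\jeq^{\ast(j+1)}\ast\taup^{\ast(n-j)}$, applying the induction hypothesis with the reduced parameter $\tilde m=m-j-1$ to the second summand. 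Organized this way, the convolution order of $\jeq$ never exceeds $m$ and the $\taup$-power never exceeds $n$, so no reclassification of surplus terms is ever needed.

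Your route genuinely needs such a reclassification (as you note, cross terms $p^{s-n-1}\jeq^{\ast s}$ with $s$ up to $2m$, and tails $\jeq^{\ast a}\ast\taup^{\ast j}$ with $a>m$ or $j>n+1$, do appear), and both mechanisms you propose for it fail. The bound $\jeq^{\ast a}(x)\le c(a)\,\jeq^{\ast m}(x)\,(2d)^{(a-m)/2}$ is false pointwise --- by Observation~\ref{obs:J_convolutions}, $\jeq^{\ast m}(x)=0$ whenever $|x|>m$ or $m-|x|$ is odd, while $\jeq^{\ast a}(x)$ need not vanish there --- and even where it holds it carries a factor $(2d)^{(a-m)/2}$. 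The ``trivial'' bound $\jeq^{\ast a}\ast\taup^{\ast j}\le \mathrm{const}\cdot\jeq^{\ast m}\ast\taup^{\ast j}$ is likewise false with a $d$-independent constant: $\jeq^{\ast(a-m)}$ has total mass $(2d)^{a-m}$, so the left-hand side can exceed the right-hand side by a factor of that order. Either way your $c$ would acquire positive powers of $2d$, which is fatal for the statement as it is used: in the proof of Lemma~\ref{lem:bootstrap:bounds_on_V} the whole point of Observation~\ref{obs:tau_J_extraction} is to isolate the powers of $2d$ inside $p^{s-1}\jeq^{\ast s}$ and $p^{m+j-n}\jeq^{\ast m}$ so that they pair with powers of $p$ to give $(2dp)^{\cdot}(2d)^{-\cdot}\le c_f/d$; a prefactor growing with $d$ would destroy these estimates. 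So the induction must be reorganized so that the $\jeq$-order never overshoots $m$ (as in the paper's induction on $m-n$), rather than patched with the stated absorptions.
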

\begin{proof}
The observation heavily relies on the bound 
	\eqq{ \taup(x) \leq \jeq(x) + \E\Big[ \sum_{y \in \omega} \mathds 1_{\{ |y|=1, y \longleftrightarrow x \}} \Big] = \jeq (x) + p (\jeq\ast\taup)(x). \label{eq:boot:tau_bound_tau_tilde}  }
\col{Note that the left-hand side equals 0 when $x=\orig$.} We prove the statement by induction on $m-n$; for the base case, let $m\leq n$. Then we apply~\eqref{eq:boot:tau_bound_tau_tilde} to $m$ of the $n$ convoluted $\taup$ terms to obtain
	\al{\taup^{\ast n}(x) &\leq \big(\taup^{\ast(n-m)} \ast \big(\jeq + p(\jeq\ast\taup) \big)^{\ast m}  \big)(x) \\
				& = \sum_{l=0}^{m} \binom{m}{l} p^l \big( \jeq^{\ast m} \ast \taup^{\ast (n-m+l)} \big)(x) 
								= \sum_{l=n-m}^{n} \binom{m}{l+m-n} p^{l+m-n} \big( \jeq^{\ast m} \ast \taup^{\ast l} \big)(x). }
Let now $m-n>0$. Applying~\eqref{eq:boot:tau_bound_tau_tilde} once yields a sum of two terms, namely
	\eqq{ \taup^{\ast n}(x) \leq \big(\jeq \ast \taup^{\ast (n-1)}\big)(x) + p \big(\jeq\ast\taup^{\ast n}\big)(x). \label{eq:boot:tau_bound_convol_base}}
We can apply the induction hypothesis on the second term with $\tilde m = m-1$ and $\tilde n=n$, producing terms of the sought-after form. Now, observe that application of~\eqref{eq:boot:tau_bound_tau_tilde} yields
	\eqq{ \big(\jeq^{\ast j} \ast \taup^{\ast (n-j)}\big)(x) \leq \big( \jeq^{\ast(j+1)} \ast \taup^{\ast(n-j-1)}\big) (x) + p \big( \jeq^{\ast(j+1)} \ast \taup^{\ast(n-j)}\big)(x)  \label{eq:boot:tau_bound_convol_step}}
for $1 \leq j <n$. For every $j$, the second term can be bounded by the induction hypothesis for $\tilde m=m-j-1$ and $\tilde n = n-j$ (so that $\tilde m - \tilde n < m-n$) with suitable $c(m,n)$. Hence, we can iteratively break down~\eqref{eq:boot:tau_bound_convol_base}; after applying~\eqref{eq:boot:tau_bound_convol_step} for $j=n-1$, we are left with the term $\jeq^{\ast n}(x)$, finishing the proof.
\end{proof}

We now define
	\[V_p^{(m,n)}(a) := (\jeq^{\ast m} \ast \taup^{\ast n})(a), \quad W_p^{(m,n)}(a;k) := (\taupk \ast V_p^{(m,n)})(a),  \quad \widetilde W_p^{(m,n)}(a;k) := (\jek \ast V_p^{(m,n)})(a).\]
Note that $W_p$ from Definition~\ref{def:displacement_quantities} relates to the above definition via $W_p = pW_p^{(0,0)} + pW_p^{(0,1)}$. \col{Moreover, $\triangle_p(x) = p^2 V^{(0,3)}(x) $.}

\begin{lemma}[Bounds on $V_p^{(m,n)}, W_p^{(m,n)}, \widetilde W_p^{(m,n)}$] \label{lem:bootstrap:bounds_on_V}
Let $p \in [0,p_c)$ and $m,n\in\N_0$ with $d>\tfrac{20}{9} n$. Then there is a constant $c_f=c(m,n,f(p))$ (increasing in $f$) such that the following hold true:
\begin{enumerate}
\item For $m+n \geq 2$,
	\[  p^{m+n-1} V_p^{(m,n)}(a) \leq \begin{cases} c_f &\mbox{if } m+n =2 \text{ and } a=\orig, \\ c_f/d & \mbox{else}.	\end{cases}  \]
\item For $m+n \geq 1$, and under the additional assumption $d >2n+4$ for the bound on $W_p^{(m,n)}$,
	\[ p^{m+n}  \max \Big\{ \sup_{a \in \Zd}  \widetilde W_p^{(m,n)}(a;k), \sup_{a\in\Zd} W_p^{(m,n)}(a;k) \Big\}
					\leq [1-\fconnf(k)] \times \begin{cases} c_f &\mbox{if } m+n \leq 2, \\ c_f/d & \mbox{if } m+n \geq 3.	\end{cases}  \]
\end{enumerate}
\end{lemma}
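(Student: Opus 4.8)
I would reduce everything to estimating plain convolutions $\jeq^{\ast a}\ast\taup^{\ast b}$ carrying a prescribed power of $p$, using the two counting Observations~\ref{obs:J_convolutions} and~\ref{obs:tau_J_extraction} together with Fourier inversion (legitimate since $\sum_x\taup(x)=\ftau(0)<\infty$ for $p<p_c$) and the random-walk bounds of Propositions~\ref{thm:random_walk_triangle} and~\ref{thm:random_walk_triangle_related}. Two features of site percolation govern the bookkeeping. First, $\jeq=2d\connf$, so each factor $\jeq$ carries a factor $2d$; this is harmless precisely when the $\jeq$ can be paired with a factor $p$ (since $2dp\le f$ by the bound on $f_1$) or left inside a high convolution power $\jeq^{\ast c}$, where passing to Fourier space and invoking Proposition~\ref{thm:random_walk_triangle} produces a gain $d^{-\lfloor c/2\rfloor}$ that dominates the lone $2d$. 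Second, there is a \emph{missing factor of $p$}: in $p^{m+n-1}V_p^{(m,n)}$, resp.\ $p^{m+n}W_p^{(m,n)}$, one has exactly one factor $p$ too few to pair every $\taup$ with a $p$ (so as to use $p\ftau(l)\le f\,\fgreenslam(l)$, i.e.\ the bound on $f_2$) and every $\jeq$ (resp.\ also every $\taupk$, via the bound on $f_3$) with a $p$. The device I would use to absorb this deficit is Observation~\ref{obs:tau_J_extraction} with a \emph{large} free parameter $M=M(m,n)$: it rewrites $V_p^{(m,n)}$ as a finite sum of pure-$\jeq$ convolutions $p^{\ell}\jeq^{\ast(m+\ell+n)}$ ($0\le\ell\le M-n$) together with remainder terms $p^{M+j-n}\,\jeq^{\ast(m+M)}\ast\taup^{\ast j}$ ($1\vee(n-M)\le j\le n$), in which the missing $p$ leaves a single unpaired $\jeq$ sitting inside the large power $\jeq^{\ast(m+M)}$.

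\noindent\textbf{Part~(1).}
For a pure-$\jeq$ term, Observation~\ref{obs:J_convolutions} gives $\jeq^{\ast N}(a)\le N!\,(2d)^{(N-|a|)/2}$ when $N-|a|$ is even and $0$ otherwise; writing, with $N=m+\ell+n\ge 2$, $p^{N-1}(2d)^{(N-|a|)/2}=(2dp)^{(N-|a|)/2}\,p^{(N+|a|)/2-1}\le f^{N-1}(2d)^{\,1-(N+|a|)/2}$, shows this is $O(d^{-1})$ except when $N+|a|\le 2$, that is when $\ell=0$, $a=\orig$ and $m+n=2$, where it equals $2dp\le f$; this is exactly the exceptional case in the statement. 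For a remainder term I pass to Fourier and bound $p^{\,m+M+j-1}\jeq^{\ast(m+M)}\ast\taup^{\ast j}(a)$ by $2d\,f^{\,m+M+j-1}\int_{\fspace}|\fconnf(l)|^{m+M}\fgreenslam(l)^{j}\,\tfrac{\dd l}{(2\pi)^d}$ (pairing $m+M-1$ of the $\jeq$'s and all $j$ factors $\taup$ with $p$'s, leaving one bare $\jeq$), and apply Proposition~\ref{thm:random_walk_triangle} with exponent $j\le n$ (so $d>2n$ suffices; the precise constant $\tfrac{20}{9}$ in the hypothesis is what makes all the invoked instances applicable) to obtain $O(d^{\,1-\lfloor(m+M)/2\rfloor})=O(d^{-1})$ once $M\ge 4$. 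Summing the finitely many terms proves~(1).

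\noindent\textbf{Part~(2).}
Since $\jek$ is supported on the neighbours of $\orig$ and $\sum_x\jek(x)=2d[1-\fconnf(k)]$, one has at once $\widetilde W_p^{(m,n)}(a;k)\le 2d[1-\fconnf(k)]\sup_y V_p^{(m,n)}(y)$, and multiplying by $p^{m+n}$ and using $2dp\le f$ together with Part~(1) gives the stated bound on $\widetilde W_p^{(m,n)}$ (no further dimension hypothesis is needed). For $W_p^{(m,n)}$ the factor $\taupk$ is not finitely supported, so I would first expand it: from $\taup\le\jeq+p(\jeq\ast\taup)$, i.e.~\eqref{eq:boot:tau_bound_tau_tilde}, and Lemma~\ref{lem:cosinesplitlemma} (split of cosines) one gets $\taupk\le\jek+2p(\jek\ast\taup)+2p(\jeq\ast\taupk)$, and iterating the last term (with its own large parameter) yields $\taupk\le\sum_i(2p)^i\,\jeq^{\ast i}\ast\jek+2p\sum_i(2p)^i\,\jeq^{\ast i}\ast\jek\ast\taup+(\text{remainder})$. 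Convolving with $V_p^{(m,n)}$, the first two families become $\jek\ast V_p^{(m+i,n)}$ and $\jek\ast V_p^{(m+i,n+1)}$ and are treated exactly as $\widetilde W$ above, via Part~(1) and $2dp\le f$; the powers of $d$ combine so that these contribute $O(1)[1-\fconnf(k)]$ when $m+n\le 2$ and $O(d^{-1})[1-\fconnf(k)]$ when $m+n\ge 3$. The remaining term is estimated in Fourier space, using $p\,|\ftaupk(l)|\le f\,\ulam(k,l)$ (the bound on $f_3$) for the surviving $\taupk$ and $f_2$, resp.\ $2dp\le f$, for the other factors; bounding the products $\fgreenslam(l\pm k)\fgreenslam(l)$ inside $\ulam$ against powers of $\fgreenslam$ reduces the estimate to Proposition~\ref{thm:random_walk_triangle_related}, and it is precisely the resulting integrals --- of the shape $\int \fconnf(l)^2\fgreenslam(l)^{n+O(1)}[\fgreenslam(l+k)+\fgreenslam(l-k)]^{O(1)}\,\dd l$ --- that force the hypothesis $d>2n+4$.

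\noindent\textbf{The main obstacle.}
The genuine difficulty, and the reason the naive insertion of the $f$-bounds fails, is the systematic missing factor of $p$ flagged by the authors. Making it innocuous rests on the interplay of Observation~\ref{obs:tau_J_extraction} (which trades the deficit for arbitrarily many extra $\jeq$-factors, at the price of extra $p$'s) with Observation~\ref{obs:J_convolutions} and Proposition~\ref{thm:random_walk_triangle} (which together show that one unpaired $\jeq$, i.e.\ a lone $2d$, is always outweighed by the power-of-$d$ decay of a sufficiently high $\jeq$-convolution). Getting this balance right --- and, for $W_p^{(m,n)}$, simultaneously shepherding the displacement factor through the $\taupk$-expansion and the $f_3$-bound --- is the only delicate point; the rest is routine convolution bookkeeping.
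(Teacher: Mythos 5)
Your overall strategy coincides with the paper's: Observation~\ref{obs:tau_J_extraction} trades the single missing factor of $p$ for a long pure-$\jeq$ convolution, Observation~\ref{obs:J_convolutions} and Proposition~\ref{thm:random_walk_triangle} convert that long convolution into a power of $d^{-1/2}$ that beats the one unpaired $2d$, the $\widetilde W$ bound reduces to the $V$ bound because $\jek$ has total mass $2d[1-\fconnf(k)]$, and the $W$ bound goes through the iterated expansion $\taupk\le\jek+2p(\jek\ast\taup)+2p(\jeq\ast\taupk)$ followed by a Fourier estimate using $f_2$, $f_3$ and Proposition~\ref{thm:random_walk_triangle_related}. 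Parts (1) and the $\widetilde W$ part of (2) are fine (your direct treatment of $m+n=2$ via the supremum from Part (1) is in fact a little shorter than the paper's case analysis).

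There is, however, one genuine gap, in the remainder term of the $W_p^{(m,n)}$ bound when $m+n\ge 3$. After the iteration you are left with
\[
p^{m'+n}\int_{\fspace}|\fjeq(l)|^{m'}\,|\ftau(l)|^{n}\,|\ftaupk(l)|\,\frac{\dd l}{(2\pi)^d},\qquad m'\ \text{large},
\]
with one object unpaired. Pairing $m'-1$ of the $\fjeq$'s with $p$'s and using $f_2$ and $f_3$ leaves a lone factor $2d$ in front of $\int|\fconnf(l)|^{m'}\fgmu(l)^n\big[\fgmu(l)\big(\fgmu(l+k)+\fgmu(l-k)\big)+\fgmu(l+k)\fgmu(l-k)\big]\dd l$. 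Your sketch then discards all but two powers of $\fconnf$ and applies Proposition~\ref{thm:random_walk_triangle_related} with its $m=1$, which yields only a single factor $d^{-1}$ --- exactly cancelled by the lone $2d$. The result is $[1-\fconnf(k)]\,O(1)$, which is the bound required for $m+n\le 2$ but \emph{not} the $[1-\fconnf(k)]\,c_f/d$ required for $m+n\ge 3$ (and that stronger bound is genuinely used later, e.g.\ for $p^3W_p^{(2,1)}$ in Lemma~\ref{lem:bootstrap:bounds_on_Pi0}). Proposition~\ref{thm:random_walk_triangle_related} only admits $\fconnf^{2m}$ with $m\in\{0,1\}$, so you cannot extract the full gain $d^{-m'/2}$ from $|\fconnf|^{m'}$ while simultaneously keeping the shifted Green's functions in the same integral. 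The missing ingredient is H\"older's inequality (as in the paper's treatment of both $V_p$ for $m\ge 4$ and the $W_p$ remainder): split off $\big(p^{10(m'-1)}\int\fjeq^{10m'}\big)^{1/10}$, which by Observation~\ref{obs:J_convolutions} is at most $c\,f_1(p)^{m'-1}(2d)^{1-m'/2}\le c_f/d$ for $m'\ge 4$, and bound the complementary factor $\big(\int(p|\ftau|)^{10n/9}(p|\ftaupk|)^{10/9}\big)^{9/10}$ by $[1-\fconnf(k)]\,c_f$ using $f_2$, $f_3$ and Proposition~\ref{thm:random_walk_triangle_related} with $m=0$. With that insertion your argument closes; note also that the same H\"older step is what your Part (1) quietly avoids (there Proposition~\ref{thm:random_walk_triangle} does allow arbitrary even powers of $\fconnf$ alongside $\fgmu^j$, so your direct application is legitimate there, modulo rounding an odd exponent $m+M$ down to an even one).
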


\col{We apply Lemma~\ref{lem:bootstrap:bounds_on_V} for $n \leq 3$, and so $d\geq 7 > 60/9$ for the dimension suffices.}

\begin{proof}
\underline{Bound on $V_p$.} 
We start with the case $m\ge4$ where we can rewrite the left-hand side via Fourier transform and apply H\"older's inequality to obtain
	\begin{align}p^{m+j-1} (\jeq^{\ast m} \ast\taup^{\ast j})(a) &= p^{m+j-1} \int_{\fspace} \e^{-\i k \cdot a} \fjeq(k)^m \ftau(k)^j \frac{\dd k}{(2\pi)^d} \notag\\
		& \leq \bigg( p^{10(m-1)} \int_{\fspace} \fjeq(k)^{10m} \frac{\dd k}{(2\pi)^d} \bigg)^{1/10} \bigg(\int_{\fspace} (p |\ftau(k)|)^{10j/9} \frac{\dd k}{(2\pi)^d} \bigg)^{9/10} \notag\\
		& \leq  \Big( p^{10(m-1)} \jeq^{\ast 10m}(\orig) \Big)^{1/10} \times f_2(p)^j \bigg(\int_{\fspace} \fgmu(k)^{10j/9} \frac{\dd k}{(2\pi)^d} \bigg)^{9/10}. 
		\label{eq:boot:bounds_on_V_Hoelder}	\end{align}
We note that the number $10$ in the exponent holds no special meaning other than that it is large enough to make the following arguments work. The first factor in~\eqref{eq:boot:bounds_on_V_Hoelder} is handled by Observation~\ref{obs:J_convolutions}, as
	\al{ \Big( p^{10(m-1)} \jeq^{\ast 10m}(\orig) \Big)^{1/10} &\leq \Big( c p^{10(m-1)} (2d)^{5m} \Big)^{1/10} \leq \Big( c (2dp)^{10(m-1)} (2d)^{-5m+10} \Big)^{1/10} \\
			&\leq c f_1(p)^{m-1} (2d)^{-m/2+1} \leq c_f /d }
and $m \geq 4$. 
Regarding the second factor in~\eqref{eq:boot:bounds_on_V_Hoelder}, note that $10j/9 \leq 10n/9 < d/2 $ and so Proposition~\ref{thm:random_walk_triangle} gives a  uniform upper bound. 
\col{We remark that the exponent $10$ in~\eqref{eq:boot:bounds_on_V_Hoelder} is convenient because it is even and allows us to apply Lemma~\ref{lem:bootstrap:bounds_on_V} in dimension $d\ge7$.}

If $m<4$, then we first use Observation~\ref{obs:tau_J_extraction} with $\tilde m= 4-m$ to get that $p^{m+n-1}V_p^{(m,n)}$ is bounded by a sum of terms of two types, which are constant multiples of
	\eqq{ p^{l+m+n-1} \jeq^{\ast (l+m+n)}(a) = p^{s-1} \jeq^{\ast s}(a) \qquad \text{and} \quad p^{4+j-1} (\jeq^{\ast 4} \ast\taup^{\ast j})(a), \label{eq:boot:bounds_on_V_types}}
where $0 \leq l \leq 4-m-1$ (and therefore $s \geq 2$) and $1 \leq j \leq n$. 
If $s$ is odd, we can write $s= 2r+1$ for some $r \geq 1$, and Observation~\ref{obs:J_convolutions} gives
	\[ p^{2r} \jeq^{\ast (2r+1)}(a) \leq c p^{2r} (2d)^{r} = c (2dp)^{2r} (2d)^{-r} \leq c (f_1(p))^{2r} (2d)^{-r} \leq c_f/d.\]
Similarly, if $s$ is even and $a \neq \orig$ or $s \geq 4$, then $p^{s-1} \jeq^{\ast s}(a) \leq c_f/d$. Finally, if $s=2$ and $a=\orig$, then $p (\jeq\ast\jeq)(\orig) \leq c_f$.
This shows that the terms of the first type in~\eqref{eq:boot:bounds_on_V_types} are of the correct order. 
The second type is of the form $p^{4+j-1} V_p^{(4,j)}(a)$ and included in the previous considerations. 
Together this proves the claimed bound on $V_p^{(m,n)}$.

\underline{Bound on $\widetilde W_p$.} Let first $m+n \geq 3$. Then
	\al{ p^{m+n} \widetilde W_p^{(m,n)}(a;k) &= p^{m+n} \sum_{y\in\Zd} \jek(y) V_p^{(m,n)}(a-y) \\
		& \leq p^{m+n-1} \Big( \sup_{a\in\Zd} V_p^{(m,n)}(a) \Big) (2dp) \sum_{y\in\Zd} [1-\cos(k\cdot y)] \connf(y)\\
		& \leq c_f/d \times f_1(p) [1-\fconnf(k)], }
applying the bound on $V_p$. 

Consider now $m+n=2$. Using first that $\jeq \leq \taup$ and then~\eqref{eq:boot:tau_bound_tau_tilde},
	\eqq{ p^2 \widetilde W_p^{(m,n)}(a;k) \leq p^2 \widetilde W_p^{(0,2)}(a;k) \leq p^2 \widetilde W_p^{(2,0)}(a;k) + p^3 \widetilde W_p^{(2,1)}(a;k) + p^3 \widetilde W_p^{(1,2)}(a;k).
					\label{eq:boot:V_1_bound} }
The second and third summand right-hand side of~\eqref{eq:boot:V_1_bound} can be dealt with as before, we only have to deal with the first summand. Indeed, 
	\[ p^2 \widetilde W_p^{(2,0)}(a;k) = p^2 \sum_{y} \jek(y) \jeq^{\ast 2}(a-y) \leq 2d p^2 \jeq^{\ast 2}(\orig) \sum_{y} D_k(y) = f_1(p)^2 [1-\fconnf(k)], \]
and we can choose $c_f = f_1(p)^2$. 

Finally, for $m+n=1$, we have $p \widetilde W_p^{(m,n)}(a;k) \leq p \widetilde W_p^{(1,0)}(a;k) + p^2 \widetilde W_p^{(1,1)}(a;k)$. The second term was already bounded, the first is
	\[ p(\jek\ast\jeq)(a) \leq p\sum_y \jek(y) = f_1(p) [1-\fconnf(k)].\]

\underline{Bound on $W_p$.} We note that a combination of~\eqref{eq:boot:tau_bound_tau_tilde} and the Cosine-split lemma~\ref{lem:cosinesplitlemma} yields
	\eqq{ \taupk(x) \leq \jek(x) + 2p (\jek\ast\taup)(x) + 2p (\jeq\ast\taupk)(x). \label{eq:boot:taupk_jek_split_bound}}
Applying this repeatedly, we can bound $p^{m+n} W_p^{(m,n)}(a;k)$ by a sum of quantities of the form $ p^{s+t} \widetilde W_p^{(s,t)}$ (where $s+t \geq 1$) plus $c(m,n) p^{m+n} W_p^{(m,n)}$, where we can now assume $m \geq 4$ w.l.o.g. The terms of the form $\widetilde W_p^{(s,t)}$ were bounded above already. Similarly to how we obtained the bound~\eqref{eq:boot:bounds_on_V_Hoelder}, we bound the last term by applying H\"older's inequality, and so
	\algn{p^{m+n} W_p^{(m,n)}& (a;k) \leq p^{m+n} \int_{\fspace} |\fjeq(l)|^m |\ftau(l)|^n |\ftaupk(l)| \frac{\dd l}{(2\pi)^d} \notag\\
		& \leq \bigg( p^{10(m-1)} \int_{\fspace} \fjeq(l)^{10m} \frac{\dd l}{(2\pi)^d} \bigg)^{1/10} \bigg(\int_{\fspace} (p |\ftau(l)|)^{10n/9} (p |\ftaupk(l)|)^{10/9} \frac{\dd l}{(2\pi)^d} \bigg)^{9/10} \notag\\
		& \leq c_f /d \times 3000 f(p)^{n+1} \bigg(\int_{\fspace} \fgmu(l)^{10n/9} \Big[ \fgmu(l) \big(\fgmu(l-k) + \fgmu(l+k)\big) \notag\\
		& \hspace{5cm} + \fgmu(l-k)\fgmu(l+k)\Big]^{10/9}  \frac{\dd l}{(2\pi)^d} \bigg)^{9/10} \notag\\
		& \leq c_f/d, \label{eq:boot:V_2_bound} }
where the last bound is due to Proposition~\ref{thm:random_walk_triangle_related} and the value of $c_f$ has changed in the last line.
\end{proof}

The proofs of the following lemmas, bounding the quantities appearing in Section~\ref{sec:diag_bounds}, are direct consequences of Lemma~\ref{lem:bootstrap:bounds_on_V}.

\begin{lemma}[Bounds on various triangles] \label{lem:bootstrap:bounds_on_trip}
Let $p \in [0,p_c)$ and $d>6$. Then there is $c_f=c(f(p))$ (increasing in $f$) such that
	\[ \max\{ \trip,\tripo, \tripf, \tripof\} \leq c_f /d, \qquad \max\{\tripf(\orig), \tripof(\orig),  \tripoff\} \leq c_f.  \]
\end{lemma}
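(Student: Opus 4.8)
The plan is to express every triangle in Definition~\ref{def:db:modified:triangles} as a convolution of copies of $\taup$, $\taupo$ and $\taupf$, and then to invoke the bounds on $V_p^{(m,n)}$ from Lemma~\ref{lem:bootstrap:bounds_on_V}.\ref{firstassertion}. The key observation is that $\taupo(x) = \delta_{\orig,x} + \taup(x)$ and $\taupf(x) = \delta_{\orig,x} + p\taup(x)$, so that a triple convolution $p^{a}(f_1 \ast f_2 \ast f_3)(x)$, where each $f_i \in \{p^{\epsilon_i}\taup,\ \delta_\orig\}$, expands into a sum of at most $8$ terms, each of which is a constant multiple of $p^{b} (\taup^{\ast j})(x)$ for some $1 \le j \le 3$ and some nonnegative power $b$ of $p$ that I will check matches $j-1$ (or exceeds it, which is even better since $p \le 1$). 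Then $p^{j-1}\taup^{\ast j}(x) = p^{j-1}V_p^{(0,j)}(x)$, and Lemma~\ref{lem:bootstrap:bounds_on_V}.\ref{firstassertion} gives $\le c_f/d$ for $j \ge 2$ and all $x$ (equivalently $m+n \ge 2$ with $m=0$), and gives $\le c_f$ when $j+m = 2$ and $x=\orig$; one also needs the trivial bound $\taup(x) \le 1$ for the stray $j=1$ term. I would carry out this bookkeeping first for $\trip$ itself.

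\emph{First} I would do $\trip = p^2 V_p^{(0,3)} $: here $m+n = 3 \ge 3$, so directly $\trip \le c_f/d$, uniformly in $x$. \emph{Next}, $\tripo(x) = p^2(\taupo\ast\taup\ast\taup)(x) = p^2(\taup^{\ast 2})(x) + p^2(\taup^{\ast 3})(x) = p^2 V_p^{(0,2)}(x) + p^2 V_p^{(0,3)}(x)$. The second term is $\le c_f/d$ as before; the first term is $p\cdot\big(p V_p^{(0,2)}(x)\big) \le p \cdot c_f \le c_f$ by Lemma~\ref{lem:bootstrap:bounds_on_V}.\ref{firstassertion} (the $m+n=2$ case), and in fact $p\cdot c_f/d$ for $x \ne \orig$; but actually for $\tripo$ we only claim $\le c_f/d$ in the sup, so I need to be a little more careful — here the extra factor $p$ in front of $pV_p^{(0,2)}$ does not by itself produce the $1/d$. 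The fix is to not discard the $\delta_\orig$ crudely: write $p^2(\taupo\ast\taup\ast\taup)(x) = p^2(\delta_\orig \ast \taup\ast\taup)(x) + p^2(\taup\ast\taup\ast\taup)(x) = p^2 \taup^{\ast 2}(x) + \trip(x)$, and bound $p^2\taup^{\ast 2}(x)$ using Observation~\ref{obs:tau_J_extraction} (extract enough factors of $\jeq$ to expose powers of $f_1(p) = 2dp$ and an overall $1/d$) exactly as the proof of Lemma~\ref{lem:bootstrap:bounds_on_V} does in its $m<4$ branch; alternatively one checks $p^2 V_p^{(0,2)}(x) = p^2 V_p^{(2,0)}(x) + \dots$ after one application of~\eqref{eq:boot:tau_bound_tau_tilde}, and $p^2 \jeq^{\ast 2}(x) \le p^2\jeq^{\ast 2}(\orig) = c f_1(p)^2 (2d)^{-1} \le c_f/d$. \emph{Then} $\tripf(x) = p(\taupf\ast\taup\ast\taup)(x) = p(\taup^{\ast 2})(x) + p^2(\taup^{\ast 3})(x) = p V_p^{(0,2)}(x) + \trip(x)$: the second term is $\le c_f/d$, the first is $\le c_f$ always and $\le c_f/d$ for $x \ne \orig$, which is exactly why $\tripf = \sup_{\orig\ne x}\tripf(x) \le c_f/d$ while $\tripf(\orig) \le c_f$. \emph{Similarly} $\tripof(x) = p(\taupf\ast\taupo\ast\taup)(x)$ expands into $p\taup^{\ast 1} + p\taup^{\ast 2} + p^2\taup^{\ast 2} + p^2\taup^{\ast 3}$ terms (grouping $\delta_\orig$'s): the only term preventing a $1/d$ at $x=\orig$ is the $p\taup(x)$ coming from $\delta_\orig \ast \delta_\orig \ast \taup$, whence $\tripof = \sup_{\orig\ne x}\tripof(x) \le c_f/d$ and $\tripof(\orig)\le c_f$; all other terms are $\le c_f/d$ by the $V_p$ bounds (again using the $p^2\jeq^{\ast 2}$ trick for the lone $p\taup^{\ast 2}$). \emph{Finally} $\tripoff(x) = (\taupf\ast\taupf\ast\taupo)(x)$: expanding, the leading ``all-$\delta_\orig$'' term is $\delta_{\orig,x}$, the all-$\taup$ term is $p^2\taup^{\ast 3}(x)$, and the mixed ones are $p^{\le 2}\taup^{\ast 1}$ or $p^{\le 2}\taup^{\ast 2}$; each is bounded by $c_f$ uniformly in $x$ (the $\taup^{\ast 2}$ and $\taup^{\ast 1}$ terms by $1$ up to constants, or by the $m+n=2$ case of Lemma~\ref{lem:bootstrap:bounds_on_V}), giving $\tripoff \le c_f$. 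Throughout, $d > 6$ suffices because we only ever invoke Lemma~\ref{lem:bootstrap:bounds_on_V} with $n \le 3$, and as remarked there $d \ge 7 > 60/9$ is enough.

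The only mild obstacle is the bookkeeping of which coincidence term ($\delta_\orig$-term) costs a factor of $1/d$ and which does not: precisely the terms that reduce to $p\taup^{\ast 1}(x)$ or to $p^2\taup^{\ast 2}(\orig)$-type expressions with $x=\orig$ fail to be $O(1/d)$, and these are exactly the ones that force the distinction between $\sup_{x}$ and $\sup_{\orig\ne x}$ in Definition~\ref{def:db:modified:triangles} and between $\tripf$ and $\tripf(\orig)$ etc. Matching this up term-by-term with the claimed two sets of bounds is the substance of the proof; once the expansions are written out it is immediate. I would also remark, as the paper does after Observation~\ref{obs:J_convolutions}, that $p^2\jeq^{\ast 2}(\orig) = c\, f_1(p)^2 (2d)^{-1} \le c_f/d$, which is the one elementary estimate used repeatedly to upgrade a stray $p\,\taup^{\ast 2}$ to order $1/d$ away from the diagonal.
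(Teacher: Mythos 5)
Your proposal is correct and follows essentially the same route as the paper: each modified triangle is expanded into terms of the form $p^{b}\taup^{\ast j}=p^{b}V_p^{(0,j)}$, Lemma~\ref{lem:bootstrap:bounds_on_V} is applied, and the stray $p\taup\le p$ term is controlled via $p\le f_1(p)/(2d)$. The only simplification you overlook is that this same inequality $p\le f_1(p)/(2d)\le c_f/d$ already upgrades $p^{2}\taup^{\ast 2}(\orig)=p\cdot pV_p^{(0,2)}(\orig)\le p\,c_f$ to order $1/d$, so your detour through Observation~\ref{obs:tau_J_extraction} for the origin term of $\tripo$ is unnecessary (though valid); note also that $\taup^{\ast 2}(x)$ itself is \emph{not} $O(1)$, so the parenthetical ``by $1$ up to constants'' should be reserved for the $\taup^{\ast 1}$ terms only, with the $\taup^{\ast 2}$ terms handled by the $m+n=2$ case of Lemma~\ref{lem:bootstrap:bounds_on_V} as you indicate.
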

\begin{proof}
Note that
	\al{ \trip(x) = p^2 V_p^{(0,3)}(x),& \qquad  \tripo(x) = p^2V_p^{(0,2)}(x) + \trip(x), \qquad \tripf(x) = p V_p^{(0,2)}(x) + \trip(x), \\
			 \tripof(x) &= p\taup(x) + \tripf(x), \qquad \tripoff(x) = \delta_{\orig,x} + \tripof(x).}
For the bound on $p \taup \leq p$, we use that $p \leq f_1(p)/d$. For all remaining quantities, we use Lemma~\ref{lem:bootstrap:bounds_on_V}, which is applicable since $n \leq 3$ and $\tfrac{20}{9} n \leq \tfrac{60}{9} < 7 \leq d$.
\end{proof}

\begin{lemma}[Bound on $W_p$] \label{lem:bootstrap:bounds_on_W}
Let $p \in [0,p_c)$ and $d>6$. Then there is a constant $c_f=c(f(p))$ (increasing in $f$) such that
	\[ W_p(k) \leq [1-\fconnf(k)] c_f.  \]
\end{lemma}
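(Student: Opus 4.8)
The plan is to reduce $W_p(k)$ to the quantities $W_p^{(m,n)}(a;k)$ already controlled in Lemma~\ref{lem:bootstrap:bounds_on_V}. Recall from Definition~\ref{def:displacement_quantities} that $W_p(x;k) = p(\taupk \ast \taupo)(x)$ and $W_p(k) = \max_x W_p(x;k)$, and that $\taupo(x) = \delta_{\orig,x} + \taup(x)$ by Definition~\ref{def:db:modified_two-point_functions}. Expanding the $\taupo$ factor gives
\[
W_p(x;k) = p\,\taupk(x) + p\,(\taupk \ast \taup)(x) = p\,W_p^{(0,0)}(x;k) + p\,W_p^{(0,1)}(x;k),
\]
exactly the decomposition noted in the text right after the definition of $V_p^{(m,n)}, W_p^{(m,n)}, \widetilde W_p^{(m,n)}$. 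So it suffices to bound each summand.

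For the second summand, $p\,W_p^{(0,1)}(x;k)$, we are in the regime $m+n = 1$ of part~(2) of Lemma~\ref{lem:bootstrap:bounds_on_V}; since $m+n \leq 2$ there, we get $p\,W_p^{(0,1)}(a;k) \leq [1-\fconnf(k)]\, c_f$ uniformly in $a$, provided $d > 2n+4 = 6$, which holds. The first summand $p\,W_p^{(0,0)}(x;k) = p\,\taupk(x)$ is not directly covered by Lemma~\ref{lem:bootstrap:bounds_on_V} (which assumes $m+n \geq 1$), so I would handle it by hand using~\eqref{eq:boot:taupk_jek_split_bound}, namely $\taupk(x) \leq \jek(x) + 2p(\jek\ast\taup)(x) + 2p(\jeq\ast\taupk)(x)$. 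The first term gives $p\,\jek(x) \leq p \sum_y \jek(y) = f_1(p)[1-\fconnf(k)]$; the second is $2p^2 \widetilde W_p^{(1,0)}(x;k)$ and the third is $2p^2 W_p^{(1,0)}(x;k)$, both of which are the $m+n=1$ case of part~(2) of Lemma~\ref{lem:bootstrap:bounds_on_V} (again $d>6$ suffices), hence bounded by $[1-\fconnf(k)]\,c_f$. Summing the pieces and collecting constants (all increasing in $f(p)$ and independent of $d$) yields the claim.

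The main obstacle — really the only subtlety — is that $W_p(k)$ as defined carries a factor $\taupo$ that includes the Kronecker delta at the origin, producing the ``bare'' term $p\,\taupk(x)$ that lies just outside the hypotheses of Lemma~\ref{lem:bootstrap:bounds_on_V}; the fix is the one-step expansion~\eqref{eq:boot:taupk_jek_split_bound} which trades it for $\jek$ plus genuine $W_p^{(1,0)}$/$\widetilde W_p^{(1,0)}$ terms. Everything else is bookkeeping of factors of $p$ (absorbing a stray $p$ via $p \leq f_1(p)/d \leq f_1(p)$) and of the $[1-\fconnf(k)]$ prefactor, which each contributing term already carries.
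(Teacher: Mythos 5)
Your proof is correct and follows essentially the same route as the paper: decompose $W_p(x;k) = pW_p^{(0,0)}(x;k) + pW_p^{(0,1)}(x;k)$, expand the bare term $p\taupk(x)$ via~\eqref{eq:boot:taupk_jek_split_bound}, and invoke Lemma~\ref{lem:bootstrap:bounds_on_V} together with the elementary bound $p\jek(x) \leq f_1(p)[1-\fconnf(k)]$. One cosmetic slip: the term $2p^2(\jek\ast\taup)(x)$ equals $2p^2\widetilde W_p^{(0,1)}(x;k)$, not $2p^2\widetilde W_p^{(1,0)}(x;k)$, though both fall under the same $m+n=1$ case of that lemma, so the final bound is unaffected.
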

\begin{proof}
By~\eqref{eq:boot:taupk_jek_split_bound}, 
	\al{W_p(x;k) & = p W_p^{(0,1)}(x;k) + p \taupk(x) \\
		& \leq p W_p^{(0,1)}(x;k) + 2p^2 \widetilde W_p^{(0,1)}(x;k) + 2p^2 W_p^{(1,0)}(x;k) + p \jek(x).}
The proof follows from Lemma~\ref{lem:bootstrap:bounds_on_V} together with the observation that
	\[ p\jek(x) = (2dp) \connf_k(x) \leq f_1(p) \sum_{x\in\Zd} \connf_k(x) = f_1(p) [1-\fconnf(k)]. \qedhere\]
\end{proof}

\begin{lemma}[Bounds on $\Pi_p^{(0)}$ and $\Pi_p^{(1)}$] \label{lem:bootstrap:bounds_on_Pi0}
Let $p \in [0,p_c), i\in\{0,1\}$, and $d>6$. Then there is a constant $c_f=c(f(p))$ (increasing in $f$) such that
	\[ p \sum_x \Pi_p^{(0)}(x) \leq c_f/d, \qquad p \sum_x [1-\cos(k\cdot x)] \Pi_p^{(i)}(x) \leq [1-\fconnf(k)] c_f/d.  \]
\end{lemma}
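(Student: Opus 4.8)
The plan is to substitute the $n=0$ and $n=1$ diagrammatic bounds from Propositions~\ref{thm:db:bounds_for_n0} and~\ref{thm:db:displacement_thm_n1} into Lemmas~\ref{lem:bootstrap:bounds_on_V},~\ref{lem:bootstrap:bounds_on_W} and~\ref{lem:bootstrap:bounds_on_trip}, after rewriting each convolution that appears in terms of $V_p^{(m,n)}$, $W_p^{(m,n)}$ and $\widetilde W_p^{(m,n)}$. Since $\Pi_p^{(0)}\geq0$, Proposition~\ref{thm:db:bounds_for_n0} gives $p\sum_x\Pi_p^{(0)}(x)=p\,\widehat\Pi_p^{(0)}(0)\leq p^3(\jeq^{\ast 2}\ast\taup^{\ast 2})(\orig)=p^3 V_p^{(2,2)}(\orig)$, and the first bound of Lemma~\ref{lem:bootstrap:bounds_on_V} with $m=n=2$ (so that $m+n=4\geq3$) bounds this by $c_f/d$. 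For the displacement bound with $i=0$ we use that $\Pi_p^{(0)}$ is symmetric, whence $\sum_x[1-\cos(k\cdot x)]\Pi_p^{(0)}(x)=\widehat\Pi_p^{(0)}(0)-\widehat\Pi_p^{(0)}(k)\geq0$; multiplying through by $p$ and using Proposition~\ref{thm:db:bounds_for_n0} together with the identities $(\jek\ast\jeq\ast\taup^{\ast 2})(\orig)=\widetilde W_p^{(1,2)}(\orig;k)$ and $(\jeq^{\ast 2}\ast\taupk\ast\taup)(\orig)=W_p^{(2,1)}(\orig;k)$, this is bounded by $2p^3\widetilde W_p^{(1,2)}(\orig;k)+2p^3 W_p^{(2,1)}(\orig;k)$. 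Both terms satisfy $m+n=3$, so the second bound of Lemma~\ref{lem:bootstrap:bounds_on_V} (whose extra hypothesis $d>2n+4$ for the $W_p$ term is covered by $d>6$) gives $\leq[1-\fconnf(k)]c_f/d$.

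For $i=1$ I would start from Proposition~\ref{thm:db:displacement_thm_n1}. Its first summand $9W_p(k)[\tripf(\orig)(\tripof+\trip)+\tripo+\trip]$ is immediate: Lemma~\ref{lem:bootstrap:bounds_on_W} gives $W_p(k)\leq[1-\fconnf(k)]c_f$, and Lemma~\ref{lem:bootstrap:bounds_on_trip} gives $\tripf(\orig)\leq c_f$ together with $\trip,\tripo,\tripof\leq c_f/d$, so the bracket is $\leq c_f/d$ and the product is $\leq[1-\fconnf(k)]c_f/d$. The remaining summand, $p^2(\jeq\ast\taupk\ast\taup)(\orig)=p^2 W_p^{(1,1)}(\orig;k)$, is where the difficulty lies: read off directly, the second bound of Lemma~\ref{lem:bootstrap:bounds_on_V} yields only $[1-\fconnf(k)]c_f$ (this is the case $m+n=2$), and gaining the missing factor $1/d$ is the main obstacle. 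It requires turning two of the three two-point factors into $\jeq$'s.

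Concretely, applying~\eqref{eq:boot:taupk_jek_split_bound} to the factor $\taupk$ bounds $p^2(\jeq\ast\taupk\ast\taup)(\orig)$ by $p^2\widetilde W_p^{(1,1)}(\orig;k)+2p^3\widetilde W_p^{(1,2)}(\orig;k)+2p^3 W_p^{(2,1)}(\orig;k)$, and the last two terms have $m+n=3$ and are controlled by Lemma~\ref{lem:bootstrap:bounds_on_V}. For the first term I rewrite $\widetilde W_p^{(1,1)}(\orig;k)=(\jek\ast\jeq\ast\taup)(\orig)$ and apply~\eqref{eq:boot:tau_bound_tau_tilde} to the $\taup$ factor, obtaining $(\jek\ast\jeq^{\ast 2})(\orig)+p\,\widetilde W_p^{(2,1)}(\orig;k)$. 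The first of these vanishes: by Observation~\ref{obs:J_convolutions} the function $\jeq^{\ast 2}$ is supported on vectors of $1$-norm $0$ or $2$, whereas $\jek$ is supported on nearest neighbours of $\orig$, so their convolution at $\orig$ is $0$. Hence $p^2\widetilde W_p^{(1,1)}(\orig;k)\leq p^3\widetilde W_p^{(2,1)}(\orig;k)$, once more a term with $m+n=3$, and the second bound of Lemma~\ref{lem:bootstrap:bounds_on_V} closes the estimate with $[1-\fconnf(k)]c_f/d$. The monotone dependence of $c_f$ on $f(p)$ is inherited from Lemmas~\ref{lem:bootstrap:bounds_on_V}--\ref{lem:bootstrap:bounds_on_trip}, and every dimensional restriction that enters (namely $d>2n+4$ with $n\leq2$, and $d>\tfrac{20}{9}n$) follows from $d>6$.
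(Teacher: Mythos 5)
Your proposal is correct and follows essentially the same route as the paper: it feeds the $n=0$ and $n=1$ bounds of Propositions~\ref{thm:db:bounds_for_n0} and~\ref{thm:db:displacement_thm_n1} into Lemmas~\ref{lem:bootstrap:bounds_on_V}--\ref{lem:bootstrap:bounds_on_W}, and isolates $p^2W_p^{(1,1)}(\orig;k)$ as the one term needing an extra factor of $1/d$. Your treatment of that term merely applies the splits~\eqref{eq:boot:taupk_jek_split_bound} and~\eqref{eq:boot:tau_bound_tau_tilde} in the opposite order from the paper, but it reduces to the same key cancellation $(\jek\ast\jeq^{\ast 2})(\orig)=0$ and the same $m+n=3$ terms, so the argument is the same in substance.
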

\begin{proof}
We recall the two bounds obtained in Proposition~\ref{thm:db:bounds_for_n0}. The first one yields $p|\widehat\Pi_p^{(0)}(k)| \leq p^3 V_p^{(2,2)}(\orig)$, the second one yields
	\[ p \widehat\Pi_p^{(0)}(0) - p\widehat\Pi_p^{(0)}(k) \leq 2p^3 \widetilde W_p^{(1,2)}(\orig;k) + 2p^3 W_p^{(2,1)}(\orig;k).\]
All of these bounds are handled directly by Lemma~\ref{lem:bootstrap:bounds_on_V}. Similarly, the only quantity in the bound of Proposition~\ref{thm:db:displacement_thm_n1} that was not bounded already is $p^2 W_p^{(1,1)}(\orig;k)$. By a combination of~\eqref{eq:boot:tau_bound_tau_tilde} and~\eqref{eq:boot:taupk_jek_split_bound}, we can bound
	\al{ p^2 W_p^{(1,1)}(\orig;k) & \leq p^2 \Big( W_p^{(2,0)}(\orig;k) + pW_p^{(2,1)}(\orig;k) \Big) \\
		& \leq p^2 \Big( \widetilde W_p^{(2,0)}(\orig;k) + 2p \widetilde W_p^{(2,1)}(\orig;k) + 2p W_p^{(3,0)}(\orig;k) + p W_p^{(2,1)}(\orig;k) \Big).}
But $0 \leq \widetilde W_p^{(2,0)}(\orig;k) \leq 2 \jeq^{\star 3}(\orig) = 0$ by Observation~\ref{obs:J_convolutions}. The other three terms are bounded by Lemma~\ref{lem:bootstrap:bounds_on_V}.
\end{proof}

\begin{lemma}[Displacement bounds on $H_p$] \label{lem:bootstrap:bounds_on_H}
Let $p \in [0,p_c)$ and $d>6$. Then there is a constant $c_f=c(f(p))$ (increasing in $f$) such that
	\[ H_p(k) \leq [1-\fconnf(k)] c_f/d.  \]
\end{lemma}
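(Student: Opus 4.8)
The plan is to recall the definition of $H_p(b_1,b_2;k)$ from Definition~\ref{def:displacement_quantities}, namely
\[
	H_p(b_1,b_2;k) = p^5 \sum_{t,w,z,u,v} \taup(z)\,\taup(t-u)\,\taup(t-z)\,\taupk(u-z)\,\taup(t-w)\,\taup(w-b_1)\,\taup(v-w)\,\taup(v+b_2-u),
\]
and to factorize this $8$-fold product of two-point functions into a few sub-triangles joined at shared vertices. First I would perform the sum over $v$: the only two factors depending on $v$ are $\taup(v-w)$ and $\taup(v+b_2-u)$, so $\sum_v \taup(v-w)\taup(v+b_2-u) = (\taup\ast\taup)(u-w-b_2) = \taup^{\ast 2}(u-w-b_2)$. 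This is $\le \sup_x \taup^{\ast 2}(x)$, which is at most $p^{-2}\trip$, so pulling out this supremum costs a factor $\le \trip/p^2$ and removes two factors of $p$.

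Next, I would regroup the remaining six factors $\taup(z)\taup(t-u)\taup(t-z)\taupk(u-z)\taup(t-w)\taup(w-b_1)$ — which live on the vertices $\orig, z, t, u, w$ with $b_1$ fixed — and split off the piece carrying the displacement. The factors $\taup(z)$, $\taupk(u-z)$ and $\taup(t-u)$ together with a sum over $u$ give something of the shape $\sum_u \taup(t-u)\taupk(u-z) = (\taup\ast\taupk)(t-z)$, which is bounded (after summing, say, over $t$ or $z$) by $W_p(k)/p$ by Definition~\ref{def:displacement_quantities}; the remaining three factors $\taup(t-z)\taup(t-w)\taup(w-b_1)$ form an open triangle-type sum over $t,z,w$ anchored at $b_1$, bounded by a constant multiple of $\trip/p^2$. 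Carrying out these supremum extractions in the right order — always pulling out $\sup_x(\cdot)$ over a product of at most three $\taup$'s as $\le c\,\trip/p^2$, and pulling out the displacement-carrying sum as $\le W_p(k)/p$ — one arrives at a bound of the form $H_p(k)\le c\,p^5\cdot p^{-6}\,\trip^{2}\,W_p(k) = c\,p^{-1}\trip^2 W_p(k)$; a slightly more careful bookkeeping of the factors of $p$ (there are exactly five, matching the eight two-point functions minus three "free" convolutions, noting each $\taup^{\ast 2}\le p^{-2}\trip$ and $\taup\ast\taupk \le p^{-1}W_p(k)$) shows the powers of $p$ cancel and one obtains $H_p(k)\le c\,\trip\,\tripf(\orig)\,W_p(k)$ or similar — in any case a constant multiple of $\trip^{a}\,W_p(k)$ with $a\ge 1$. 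Finally, Lemma~\ref{lem:bootstrap:bounds_on_trip} gives $\trip\le c_f/d$ and Lemma~\ref{lem:bootstrap:bounds_on_W} gives $W_p(k)\le [1-\fconnf(k)]\,c_f$, so the product is $\le [1-\fconnf(k)]\,c_f/d$, which is the claim.

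The main obstacle is purely bookkeeping: choosing the order in which to extract the suprema so that (i) every extracted supremum is over a product of at most three $\taup$-factors (hence controllable by $\trip$, or by $W_p(k)$ when the displacement factor $\taupk$ is present), (ii) enough factors of $p$ are accumulated along the way to turn each $V_p^{(0,j)}$-type sub-sum and each $\taup^{\ast 2}$ into a genuine (dimensionless) triangle bound rather than a power of $p^{-1}$, and (iii) the single displacement factor $\taupk$ ends up inside exactly one $W_p(k)$ and nowhere else, so that only one factor of $[1-\fconnf(k)]$ is produced. Since the product contains eight two-point functions on five summation variables plus two parameters, and we want to end with $\trip^{\ge 1}W_p(k)$, the counting is tight but routine; alternatively one can mimic the diagrammatic/pictorial bounding used in the proof of Proposition~\ref{thm:db:displacement_thm} (the diagram $H_p(k)$ was in fact produced there), which makes the factorization transparent.
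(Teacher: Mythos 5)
There is a genuine gap here: the diagram $H_p(k)$ cannot be decomposed into triangles and a single $W_p(k)$ by successive supremum extractions in $x$-space, and your plan fails at two concrete points. First, the inequality $\sup_x\taup^{\ast2}(x)\leq p^{-2}\trip$ is false: $p^{-2}\trip=\sup_x\taup^{\ast3}(x)$ is a \emph{triple} convolution and does not dominate the bubble $\taup^{\ast2}(x)$; the bound actually available is $p^2\sup_x\taup^{\ast2}(x)\leq\tripo$, which spends two powers of $p$ on only two lines. Since $H_p$ carries eight two-point functions but only five powers of $p$, a scheme that extracts bubbles over $v$, over $w$ and over $t$ plus one $W_p(k)$ needs more powers of $p$ than are present. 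Second, and more fundamentally, after any such sequence of extractions the pendant line $\taup(z)$ emanating from the origin is left in an unconstrained sum $\sum_z\taup(z)=\ftau(0)=(\chi(p)-1)/p$, which diverges as $p\nearrow p_c$ and is not controlled by $f(p)$. The structural obstruction is that the core of $H_p$ consists of two cycles---the triangle on $z,t,u$ carrying the displacement line $\taupk(u-z)$, and the square on $t,w,v,u$---which share the line $\taup(t-u)$, while each of $z,t,u,w$ has degree three; consequently no supremum over the endpoints of a line ever isolates a closed triangle, and the pendant $\taup(z)$ can only be closed up through this $2$-connected core. This is precisely why $H_p(k)$ was split off as a separate quantity in Definition~\ref{def:displacement_quantities} and carried through Proposition~\ref{thm:db:displacement_thm} as the term $H_p(k)/W_p(k)$: it is the one diagram that the triangle/$W_p$ extractions of Section~\ref{sec:diag_bounds} \emph{cannot} absorb, so ``mimicking the diagrammatic bounding of Proposition~\ref{thm:db:displacement_thm}'' is not an available alternative---that proof produces $H_p(k)$ as the residue it cannot handle.

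The paper's proof is therefore of a genuinely different nature. It first splits $\taup(z)\leq\jeq(z)+p(\jeq\ast\taup)(z)$; the $\jeq(z)$ part does reduce in $x$-space to $\tripf(\orig)\,W_p(k)\,p^3V_p^{(1,3)}(b_1)$, but note that even there the final factor is an \emph{open} chain controlled by Lemma~\ref{lem:bootstrap:bounds_on_V} (itself Fourier-analytic), not a triangle. For the main part one dominates every $\taup$ by $\sigma=p^4(\jeq^{\ast4}\ast\taup)+\sum_{j\leq4}p^{j-1}\jeq^{\ast j}$ (Observation~\ref{obs:tau_J_extraction}), rewrites the diagram as a triple momentum integral via the inverse Fourier transform, uses the assumed bound on $f_3$ applied to $\ftaupk(l_3)$ to extract the factor $[1-\fconnf(k)]$, and then applies Cauchy--Schwarz together with the random-walk integrals of Propositions~\ref{thm:random_walk_triangle} and~\ref{thm:random_walk_triangle_related} and Observation~\ref{obs:J_convolutions} to produce the factor $1/d$. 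Any correct proof must reproduce this Fourier/Cauchy--Schwarz step in some form; purely $x$-space bookkeeping will not close this diagram.
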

\begin{proof} We recall that
	\[H_p(b_1, b_2;k) = p^5 \hspace{-0.15cm} \sum_{t,w,z,u,v} \taup(z) \taup(t-u) \taup(t-z) \taupk(u-z) \taup(t-w) \taup(w-b_1) \taup(v-w) \taup(v+b_2-u).\]
We bound the factor $\taup(z) \leq \jeq(z) + p(\jeq\ast\taup)(z)$, splitting $H_p$ into a sum of two. The first term is easy to bound. Indeed,
	\al{p^5 \sum_{t,w,z} & \jeq(z) \taup(t-z) \taup(w-t) \taup(b_1-w) \sum_u \taupk(u-z)\taup(t-u) (\taup\ast\taup)(b_2-u-w) \\
			& \leq \tripf(\orig)  p^4 \sum_{t,w,z,u} \jeq(z) \taup(t-z) \taup(w-t) \taup(b_1-w) (\taupk\ast\taup)(t-z) \\
			& \leq \tripf(\orig) W_p(k) p^3 V_p^{(1,3)}(b_1) \leq [1-\fconnf(k)] c_f/d }
by the previous Lemmas~\ref{lem:bootstrap:bounds_on_V}-\ref{lem:bootstrap:bounds_on_W}. We can thus focus on bounding
	\eqq{ p^6 \hspace{-0.15cm} \sum_{t,w,z,u,v} (\jeq\ast\taup)(z) \taup(t-u) \taup(t-z) \taupk(u-z) \taup(t-w) \taup(w-b_1) \taup(v-w) \taup(v+b_2-u). \label{eq:boot:H_bounds_conv_z} }
To prove such a bound (and thus the lemma), we need to recycle some ideas from the proof of Lemma~\ref{lem:bootstrap:bounds_on_V} in a more involved fashion. To this end, let
	\[ \sigma(x) := p^4(\jeq^{\ast 4}\ast\taup)(x) + \sum_{j=1}^{4} p^{j-1} \jeq^{\ast j}(x),\]
and note that $\taup(x) \leq \sigma(x)$ by~\eqref{eq:boot:tau_bound_tau_tilde}. Consequently,~\eqref{eq:boot:H_bounds_conv_z} is bounded by $\widetilde H_p(a_1,a_2;k)$, where we define 
	\[\widetilde H_p(a_1,a_2;k) = p^6 \sum_{t,w,z,u,v} (\jeq\ast\sigma)(z)\sigma(t-u)\sigma(t-z)\taupk(u-z)\sigma(t-w)\sigma(w-a_1) \sigma(v-w)\sigma(v+a_2-u). \]
By the Inverse Fourier Theorem, we can write
	\al{ \widetilde H_p(a_1,a_2;k) = p^6 \int_{(-\pi,\pi]^{3d}} \hspace{-0.3cm} \e^{-\i a_1 \cdot l_1-\i a_2 \cdot l_2} & \; \fjeq(l_1)\widehat\sigma(l_1)^2 \widehat\sigma(l_2)^2\ftaupk(l_3)
					 \widehat\sigma(l_1-l_2) \widehat\sigma(l_1-l_3) \widehat\sigma(l_2-l_3) \frac{\dd(l_1,l_2,l_3)}{(2\pi)^{3d}}. }
(For details on the above identity, see~\cite[Lemma 5.7]{HeyHofLasMat19} and the corresponding bounds on $H_\lambda$ therein.) We bound
	\begin{align}
		\widetilde H_p(a_1,a_2;k) &\leq 3000 f_3(p) [1-\fconnf(k)] p^5 \int_{(-\pi,\pi]^{3d}} |\fjeq(l_1)|\widehat\sigma(l_1)^2 \widehat\sigma(l_2)^2 
						|\widehat\sigma(l_1-l_2)| |\widehat\sigma(l_1-l_3)| |\widehat\sigma(l_2-l_3)| \notag \\
			& \times\Big( \fgmu(l_3)\fgmu(l_3-k) + \fgmu(l_3)\fgmu(l_3+k) + \fgmu(l_3-k)\fgmu(l_3+k) \Big) \frac{\dd(l_1,l_2,l_3)}{(2\pi)^{3d}}.  \label{eq:boot:H_bounds_applying_f_line1}
	\end{align}
Opening the brackets in~\eqref{eq:boot:H_bounds_applying_f_line1} gives rise to three summands. We show how to treat the third one. Applying Cauchy-Schwarz, we obtain
	\begin{align}
		 \bigg( & \int_{(-\pi,\pi]^{3d}} \big[ p^2 |\fjeq(l_1)| |\widehat\sigma(l_1)|^3\big]\big[p^2 \widehat\sigma(l_2-l_1)^2 |\widehat\sigma(l_2)|\big]
		 		 	\big[p \fgmu(l_3+k)^2 |\widehat\sigma(l_3-l_2)| \big]  \frac{\dd(l_1,l_2,l_3)}{(2\pi)^{3d}} \bigg)^{1/2} 	\label{eq:boot:H_bounds_CS_1}\\
			& \times \bigg( \int_{(-\pi,\pi]^{3d}} \big[p^2 |\widehat\sigma(l_2)|^3 \big] \big[ p^2\widehat\sigma(l_1-l_3)^2 |\fjeq(l_1)| |\widehat\sigma(l_1)| \big]
					 \big[p \fgmu(l_3-k)^2 |\widehat\sigma(l_3-l_2)| \big] \frac{\dd(l_1,l_2,l_3)}{(2\pi)^{3d}} \bigg)^{1/2}	\label{eq:boot:H_bounds_CS_2}
	\end{align}	
The square brackets indicate how we want to decompose the integrals. We first bound~\eqref{eq:boot:H_bounds_CS_1}, and we start with the integral over $l_3$. We intend to treat the five summands constituting $\widehat\sigma(l_3-l_2)$ simultaneously. Indeed, note that with our bound on $f_2$,
	\eqq{ |\widehat\sigma(l)| \leq \sum_{j=1}^{4} p^{j-1} |\fjeq(l)|^j + p^3\fjeq(l)^4 \fgmu(l) \leq 5 \max_{n\in\{0,1\}, j\in[4]} p^{(j \vee 4n) -1} |\fjeq(l)|^{(j \vee 4n)} \fgmu(l)^n.
					\label{eq:boot:gamma_fourier_bound}  }
With this,
	\al{ p^{(j \vee 4n)} & \int_{\fspace} |\fjeq(l_3-l_2)|^{(j \vee 4n)} \fgmu(l_3-l_2)^n \fgmu(l_3+k)^2 \frac{\dd l_3}{(2\pi)^d} \\
		\leq & \Big( p^{10 (j \vee 4n)} \int_{\fspace} \fjeq(l_3)^{10(j \vee 4n)} \frac{\dd l_3}{(2\pi)^d} \Big)^{1/10} \\		
			& \hspace{1cm} \times \Big( \int_{\fspace} \fgmu(l_3+k)^{20/9} \big[\fgmu(l_3-l_2) + \fgmu(l_3-2k + l_2)  \big]^{10/9} \frac{\dd l_3}{(2\pi)^d} \Big)^{9/10} \\
		\leq & (c_f/d)^{1/2}  }
by the same considerations that were performed in~\eqref{eq:boot:V_2_bound}. We use the same approach to treat the integral over $l_2$ in~\eqref{eq:boot:H_bounds_CS_1}. Applying~\eqref{eq:boot:gamma_fourier_bound} to all three factors of $\widehat\sigma$ gives rise to tuples $(j_i,n_i)$ for $i\in[3]$, and so
	\al{ p^{-1+\sum_{i=1}^{3} (j_i \vee 4n_i)} & \int_{\fspace} |\fjeq(l_2-l_1)|^{\sum_{i=1}^{2}(j_i\vee 4n_i)} |\fjeq(l_2)|^{j_3 \vee 4n_3} \fgmu(l_2-l_1)^{n_1+n_2} \fgmu(l_2)^{n_3}  \frac{\dd l_2}{(2\pi)^d} \\
		\leq & \Big( p^{10(-1+\sum_{i=1}^{3}(j_i \vee 4n_i))} \int_{\space} \fjeq(l_2-l_1)^{10\sum_{i=1}^{2}(j_i\vee 4n_i)} \fjeq(l_2)^{10(j_3 \vee 4n_3)} \frac{\dd l_2}{(2\pi)^d} \Big)^{1/10} \\
			 & \hspace{1cm }\times \Big( \int_{\fspace} \fgmu(l_2-l_1)^{10(n_1+n_2)/9} \big[\fgmu(l_2) + \fgmu(l_2-2 l_1)  \big]^{10n_3/9} \frac{\dd l_2}{(2\pi)^d} \Big)^{9/10} \\
		\leq & c_f \Big( p^{20(-1+\sum_{i=1}^{2}(j_i \vee 4n_i))} \int_{\space} \fjeq(l_2-l_1)^{20\sum_{i=1}^{2}(j_i\vee 4n_i)} \frac{\dd l_2}{(2\pi)^d} \Big)^{1/20} \\
			 & \hspace{1cm} \times \Big( p^{20(j_3 \vee 4n_3)} \int_{\space} \fjeq(l_2)^{20(j_3 \vee 4n_3)} \frac{\dd l_2}{(2\pi)^d} \Big)^{1/20} \\
		\leq & (c'_f/d)^{1/2}.  }
We finish by proving that the integral over $l_1$ in~\eqref{eq:boot:H_bounds_CS_1} is bounded by a constant. Indeed,
	\al{ & \hspace{2cm} p^{-1+\sum_{i=1}^{3} (j_i \vee 4n_i)} \int_{\fspace} |\fjeq(l_1)|^{1+\sum_{i=1}^{3}(j_i\vee 4n_i)} \fgmu(l_1)^{n_1+n_2+n_3} \frac{\dd l_1}{(2\pi)^d} \\
		& \leq \Big( p^{10(-1+\sum_{i=1}^{3}(j_i \vee 4n_i))} \int_{\space} \fjeq(l_1)^{10(1+\sum_{i=1}^{3}j_i\vee 4n_i)} \frac{\dd l_1}{(2\pi)^d} \Big)^{1/10} 
				 \Big( \int_{\fspace} \fgmu(l_1)^{10(n_1+n_2+n_3)/9} \frac{\dd l_1}{(2\pi)^d} \Big)^{9/10} \\
		& \leq c_f \cdot p^{-1+\sum_{i=1}^{3}(j_i \vee 4n_i)} \Big( \jeq^{\ast 10(1+\sum_{i=1}^{3}j_i\vee 4n_i)}(\orig) \Big)^{1/10} \\
		& \leq c'_f \cdot p^{-1+\sum_{i=1}^{3}(j_i \vee 4n_i)} (2d)^{\frac 12 (1+\sum_{i=1}^{3}j_i\vee 4n_i)} \leq c''_f. }
This proves that~\eqref{eq:boot:H_bounds_CS_1} is bounded by $(c_f/d)^{1/2}$. Note that~\eqref{eq:boot:H_bounds_CS_2} is very similar to~\eqref{eq:boot:H_bounds_CS_1}, and the same bounds can be applied to get a bound of $(c_f/d)^{1/2}$. Since the other two terms in~\eqref{eq:boot:H_bounds_applying_f_line1} are handled analogously, we obtain the bound $\widetilde H_p(b_1,b_2;k) \leq [1-\fconnf(k)] c_f/d$, which is what was claimed. 
\end{proof}

\begin{proof}[Proof of Proposition~\ref{thm:convergence_of_LE}]
Recalling the bounds on $|\Pi_p^{(m)}(k)|$ obtained in Propositions~\ref{thm:db:bounds_for_n0} and~\ref{thm:db:main_thm}, and the bounds on $|\Pi_p^{(m)}(k) - \Pi_p^{(m)}(0)|$ obtained in Propositions~\ref{thm:db:bounds_for_n0},~\ref{thm:db:displacement_thm_n1}, and~\ref{thm:db:displacement_thm}, we can combine them with the bounds just obtained in Lemmas~\ref{lem:bootstrap:bounds_on_trip},~\ref{lem:bootstrap:bounds_on_W},~\ref{lem:bootstrap:bounds_on_Pi0}, and~\ref{lem:bootstrap:bounds_on_H}. This gives
	\algn{ p|\widehat\Pi_p^{(m)}(k)| & \leq p \sum_{x\in\Zd} \Pi_p^{(m)}(x)  \leq c_f (c_f/d)^{m \vee 1}, \label{eq:boot:Pi_m_exp_bound}\\
		 p|\widehat\Pi_p^{(m)}(k) - \widehat\Pi_p^{(m)}(0)| &= p\sum_{x\in\Zd} [1-\cos(k\cdot x)] \Pi_p^{(m)}(x) \leq c_f [1-\fconnf(k)] (c_f/d)^{1\vee (m-2)}. \notag }
Summing the above terms over $m$, we recognize the geometric series in their bounds. The series converges for sufficiently large $d$. If $f \leq 4$ on $[0,p_c)$, we can replace $c_f$ by $c=c_4$ in the above, so that the bounds are uniform in $p\in[0,p_c)$, which means that the value of $d$ above which the series converges is independent of $p$. Hence,
	\[ p|\widehat\Pi_n(k)| \leq \sum_{m=0}^{\infty} p \Pi_p^{(m)}(x) \leq c_f/d, \qquad p|\widehat\Pi_n(k)-\widehat\Pi_n(0)| \leq [1-\cos(k\cdot x)] c_f/d.\]
The bound~\eqref{eq:boot:convergence_of_Pi_n_unsummed} follows from Corollary~\ref{cor:db:Pi_pn_unsummed_bound} by analogous arguments. Recalling the definition of the remainder term yields the straight-forward bound
	\eqq{ \sum_x R_{p,n}(x) \leq \sum_x \sum_u p \Pi_p^{(n)}(u) \taup(x-u) \leq p\widehat\Pi_p^{(n)}(0) \ftau(0) \leq c_f (c_f/d)^n \ftau(0), \label{eq:boot:R_n_bound}}
applying~\eqref{eq:boot:Pi_m_exp_bound}. Hence, if $c_f/d<1$ and $p<p_c$, then $\sum_x R_{p,n}(x) \to 0$ as $n\to\infty$. Again, if $f\leq 4$, we can replace $c_f$ by $c=c_4$ in~\eqref{eq:boot:R_n_bound} and the smallness of $(c/d)$ does not depend on the value of $p$.

The existence of the limit $\Pi_p$ follows by dominated convergence with the bound~\eqref{eq:boot:convergence_of_Pi_n_unsummed}. Together with~\eqref{eq:boot:R_n_bound}, this implies that the lace expansion identity in Proposition~\ref{thm:lace_expansion} converges as $n\to\infty$ and satisfies the OZE.
\end{proof}

Corollary~\ref{cor:boot:OZE_at_critical_point} as well as the main theorem now follow from Proposition~\ref{thm:convergence_of_LE} in conjunction with Proposition~\ref{thm:bootstrap:forbidden_region}, which is proven in Section~\ref{sec:boot:bootstrap_argument} below.

\begin{proof}[Proof of Corollary~\ref{cor:boot:OZE_at_critical_point} and Theorem~\ref{thm:main_theorem_triangle_condition}] 
Proposition \ref{thm:bootstrap:forbidden_region} implies that indeed $f(p)\le 3$ for all $p\in[0,p_c)$, and therefore, the consequences in the second part of Proposition \ref{thm:convergence_of_LE} are valid. Lemma~\ref{lem:bootstrap:bounds_on_trip} together with Fatou's lemma and pointwise convergence of $\taup(x)$ to $\tau_{p_c}(x)$ then implies the triangle condition. 

The remaining arguments are analogous to the proofs of~\cite[Corollary 6.1]{HeyHofLasMat19} and~\cite[Theorem 1.1]{HeyHofLasMat19}. 
The rough idea for the proof of Corollary~\ref{cor:boot:OZE_at_critical_point} is to use $\theta(p_c)=0$ (which follows from the triangle condition) to couple the model at $p_c$ with the model at $p<p_c$, and then show that, as $p \nearrow p_c$, the (a.s.) finite cluster of the origin is eventually the same. 
For the full argument and the proof of the infra-red bound, we refer to~\cite{HeyHofLasMat19}.
\end{proof}

\subsection{Completing the bootstrap argument} \label{sec:boot:bootstrap_argument}

It remains to prove that $f \leq 4$ on $[0,p_c)$ so that we can apply the second part of Proposition~\ref{thm:convergence_of_LE}. This is achieved by Proposition~\ref{thm:bootstrap:forbidden_region}, where three claims are made: First, $f(0)\leq 4 $, secondly, $f$ is continuous in the subcritical regime, and thirdly, $f$ does not take values in $(3,4]$ on $[0,p_c)$. This implies the desired boundedness of $f$. The following observation will be needed to prove the third part of Proposition~\ref{thm:bootstrap:forbidden_region}:
\begin{observation} \label{obs:Delta_k_bounds}
Suppose $a(x) = a(-x)$ for all $x\in\Zd$. Then
	\[ \tfrac 12 \big\vert \Delta_k \widehat a(l) \big\vert \leq \widehat{|a|}(0) - \widehat{|a|}(k)\]
for all $k,l\in\fspace$ (where $\widehat{|a|}$ denotes the Fourier transform of $|a|$). As a consequence, $|\fconnf_k(l)| \leq 1-\fconnf(k)$. Moreover, there is $d_0 \geq 6$ a constant $c_f=c(f(p))$ (increasing in $f$) such that, for all $d > d_0$,
	\[ \big\vert \Delta_k p\fpip(l) \big\vert \leq [1-\fconnf(k)] c_f /d. \]
\end{observation}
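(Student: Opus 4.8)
The plan is to derive all three assertions from one Fourier-analytic identity for the discretised second derivative. The starting point is that for any symmetric, absolutely summable $a\colon\Zd\to\R$ (so that $\widehat a$ is defined),
\[ \Delta_k \widehat a(l) = \sum_{x\in\Zd}\big(\e^{\i(l-k)\cdot x}+\e^{\i(l+k)\cdot x}-2\e^{\i l\cdot x}\big)a(x) = -2\sum_{x\in\Zd}\e^{\i l\cdot x}\,[1-\cos(k\cdot x)]\,a(x), \]
where I use $\e^{\i m\cdot x}+\e^{-\i m\cdot x}=2\cos(m\cdot x)$. Taking absolute values, bounding $|\e^{\i l\cdot x}|=1$ and keeping $1-\cos(k\cdot x)\ge 0$, I get $\tfrac12|\Delta_k\widehat a(l)|\le\sum_{x}[1-\cos(k\cdot x)]|a(x)| = \widehat{|a|}(0)-\widehat{|a|}(k)$, which is the first assertion. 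This step is purely computational and I expect no difficulty.

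The second assertion is then a one-line specialisation: apply the first claim to $a=\connf=D$, which is nonnegative with $\widehat D(0)=\sum_x D(x)=1$, and observe that the displayed identity itself identifies $\fconnf_k(l)=\widehat{\connf_k}(l)$ with $-\tfrac12\Delta_k\fconnf(l)$; hence $|\fconnf_k(l)|\le \widehat D(0)-\widehat D(k)=1-\fconnf(k)$.

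For the third assertion I would invoke the same identity with $a=\Pi_p$. Part~\ref{firstassertion} of Proposition~\ref{thm:convergence_of_LE} guarantees that for $p\in[0,p_c)$ and $d$ exceeding some $d_0\ge 6$ the limit $\Pi_p=\lim_n\Pi_{p,n}$ exists and is absolutely summable (and at $p=0$ one has $\Pi_p\equiv 0$, so the bound is trivial there); moreover $\Pi_p$ inherits the reflection symmetry of $\Zd$, so the identity applies and gives $|\Delta_k p\fpip(l)|\le 2p\sum_{x}[1-\cos(k\cdot x)]|\Pi_p(x)|$. It then remains to bound the right-hand side by $[1-\fconnf(k)]\,c_f/d$, which is exactly the second estimate in~\eqref{eq:boot:convergence_of_Pi_M}: that estimate is obtained there by summing the per-order displacement bounds of Propositions~\ref{thm:db:bounds_for_n0},~\ref{thm:db:displacement_thm_n1} and~\ref{thm:db:displacement_thm}, fed through Lemmas~\ref{lem:bootstrap:bounds_on_trip}--\ref{lem:bootstrap:bounds_on_H}, over a geometric series that converges once $d$ is large; the passage $n\to\infty$ is justified by dominated convergence using the uniform bound~\eqref{eq:boot:convergence_of_Pi_n_unsummed}. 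Absorbing the factor $2$ into $c_f$ gives the claim, with $c_f=c(f(p))$ increasing in $f$ and replaceable by a $p$-independent constant under the bootstrap hypothesis $f\le 4$.

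The only point that needs care — rather than being a genuine obstacle — is the bookkeeping of quantifiers: one must use part~\ref{firstassertion} of Proposition~\ref{thm:convergence_of_LE}, which holds for every fixed $p<p_c$ without presupposing boundedness of $f$, so that no circularity arises when Observation~\ref{obs:Delta_k_bounds} is later used inside the bootstrap argument of Section~\ref{sec:boot:bootstrap_argument}. All the genuine analytic work (the displacement diagrams) has already been done in Sections~\ref{sec:diag_bounds} and~\ref{sec:boot:consequences}, so the present statement is essentially a repackaging of those bounds via the elementary Fourier identity above.
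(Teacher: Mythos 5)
Your proposal is correct and follows essentially the same route as the paper: the same elementary identity $\Delta_k\widehat a(l)=-2\sum_x\e^{\i l\cdot x}[1-\cos(k\cdot x)]a(x)$ (the paper writes it as $\tfrac12|\Delta_k\widehat a(l)|=|\widehat{a_k}(l)|$ with $a_k(x)=[1-\cos(k\cdot x)]a(x)$), the same specialisation to $a=\connf$ using nonnegativity, and the same application to $a=\Pi_p$ combined with the displacement bound~\eqref{eq:boot:convergence_of_Pi_M}. Your extra remarks on the symmetry and summability of $\Pi_p$ and on avoiding circularity with the bootstrap are sensible but not substantively different from the paper's argument.
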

\begin{proof}
The statement for general $a$ can be found, for example, in~\cite[(8.2.29)]{HeyHof17}. For convenience, we give the proof. Setting $a_k(x)=[1-\cos(k\cdot x)] a(x)$, we have
	\al{\tfrac 12 | \Delta_k \widehat a(l) | &= | \widehat a_k(l)| \leq \sum_{x \in \Zd} \big\vert a(x) \cos(l\cdot x) [1-\cos(k\cdot x)] \big\vert
				 \leq \sum_{x \in\Zd} |a(x)| [1-\cos(k \cdot x)] \\
		&\leq \widehat{|a|}(0) - \widehat{|a|}(k) }
The consequence about $\fconnf$ now follows from $\connf(x) \geq 0$ for all $x$. Moreover, the statement for $\Delta_k p\fpip(l)$ follows applying the observation to $a=\Pi_p$ together with the bounds in~\eqref{eq:boot:convergence_of_Pi_M}.
\end{proof}

\begin{prop} \label{thm:bootstrap:forbidden_region}
The following are true:
\begin{enumerate}
\item The function $f$ satisfies $f(0) \leq 3$.
\item The function $f$ is continuous on $[0,p_c)$.
\item Let $d$ be sufficiently large; then assuming $f(p) \leq 4$ implies $f(p) \leq 3$ for all $p \in [0,p_c)$.
\end{enumerate}
Consequently, there is some $d_0$ such that $f(p) \leq 3$ uniformly for all $p \in [0,p_c)$ and $d > d_0$.
\end{prop}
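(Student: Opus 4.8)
The plan is to establish the three assertions of Proposition~\ref{thm:bootstrap:forbidden_region} in turn, and then conclude by a connectedness/continuity argument on the interval $[0,p_c)$.

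\textbf{Part (1): the value at $p=0$.} At $p=0$ the percolation configuration is empty, so $\taup(x) = \jeq(x) = 2dD(x)$, hence $p\ftau(k) = 0$ and $p\ftaupk(l)=0$; thus $f_2(0) = f_3(0) = 0$. For $f_1$, we have $f_1(0) = 2d\cdot 0 = 0$. Hence $f(0) = 0 \leq 3$. (One should be slightly careful with $f_2$ and $f_3$: the ratios are $0/\text{positive}$ away from $k=0$, and the suprema are attained as limits; since $p\ftau$ and $p\ftaupk$ vanish identically while $\fgmu$ and $\ulam$ are bounded below away from $k=0$, the suprema are $0$.)

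\textbf{Part (2): continuity on $[0,p_c)$.} Here I would argue that each $f_i$ is continuous, so their maximum is. $f_1(p) = 2dp$ is trivially continuous. For $f_2$ and $f_3$ the key input is that $p\mapsto \taup(x)$ is continuous (indeed a polynomial in $p$ for each fixed $x$, being a finite sum over self-avoiding-type paths... more precisely it is continuous and $\ftau(0) = \sum_x \taup(x) = (\chi(p)-1)/p < \infty$ for $p<p_c$ converges), and $\ftau(k)$, $\ftaupk(l)$ depend continuously on $p$ uniformly in $k,l$ because the tail $\sum_{|x|>N}\taup(x)$ is small uniformly for $p$ in a compact subinterval of $[0,p_c)$ (domination by $\chi$, which is finite and locally bounded on $[0,p_c)$; this is where $p<p_c$ and Observation~\ref{obs:tools:diff_inequality} enter). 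Since $\fgmu(k)$ is continuous in $p$ (through $\lamp = 1-1/\chi(p)$, and $\chi$ is continuous with $\chi\geq 1$) and bounded below by a positive constant, and $\ulam(k,l)$ is likewise continuous in $p$ and bounded below by a positive constant for $k$ away from $0$ while the numerators vanish near $k=0$ at a comparable rate (via $|\ftaupk(l)|\leq$ const in $l$ and the factor $1-\fconnf(k)$ in the denominator controlling the $k\to 0$ behavior), the suprema defining $f_2,f_3$ are continuous in $p$. I would cite~\cite{HeyHof17} for the standard details of this continuity argument.

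\textbf{Part (3): the improvement $f\leq 4 \Rightarrow f\leq 3$.} This is the heart of the matter and the main obstacle. Assume $f(p)\leq 4$. One improves each $f_i$ separately. For $f_1$: from Observation~\ref{obs:tools:diff_inequality}, $\tfrac{\dd}{\dd p}\ftau(0) \leq \ftau(0)^2$ gives, after integrating and using $\ftau(0)|_{p=0}=2d$... actually the cleaner route is $p\ftau(0) = \chi(p)-1$ together with the bound on the triangle: by the OZE/lace-expansion consequences of Proposition~\ref{thm:convergence_of_LE} (which apply under $f\leq 4$, giving $p|\fpip(k)|\leq c/d$), one has $p\ftau(0) = \fconnf(0)(1 + \text{small}) + \text{small}$ read off the Fourier-transformed OZE $p\ftau(k) = \frac{\widehat D(k)(1+p\fpip(k)/\widehat J(k))\cdot 2d \dots}{\dots}$; concretely $p\ftau(k) = \dfrac{\jeq(k)^{\widehat{}}\,+\,p\fpip(k)}{1-\fconnf(k)-p\fpip(k)}$ up to the normalization, and evaluating the implied bound shows $2dp \leq 1 + c/d \leq 3$ for $d$ large. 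For $f_2$: use the OZE in Fourier space to write $p\ftau(k) = \frac{\widehat D(k) + \text{(terms with }p\fpip)}{1-\fconnf(k) - p\fpip(k)}$, bound the $\Pi_p$ terms by $c/d$ using~\eqref{eq:boot:convergence_of_Pi_M}, and compare with $\fgmu(k) = \frac{1}{1-\lamp\fconnf(k)}$; the parametrization $\lamp = 1-1/\chi(p)$ is chosen precisely so that $1-\lamp\fconnf(k)$ and $1-\fconnf(k)-p\fpip(k)$ match to leading order, yielding $f_2(p)\leq 1 + c/d \leq 3$. For $f_3$: differentiate the Fourier-OZE twice (discretely, via $\Delta_k$), apply the product rule for $\Delta_k$, and bound each resulting term using Observation~\ref{obs:Delta_k_bounds} (for $\Delta_k p\fpip$ and $\Delta_k\fgreenslam$, the latter via Lemma~\ref{lem:bootstrap:Delta_k_Ulam_bound}), the bound $|\fconnf_k(l)|\leq 1-\fconnf(k)$, and the assumed bounds $f_2,f_3\leq 4$; the factor $3000$ in the definition of $\ulam$ is the slack that absorbs the combinatorial constants, so that $f_3(p)\leq 1 + c/d + (\text{bounded by }\tfrac12\cdot 3000\text{-fraction of }\ulam) \leq 3$ for $d$ large. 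The main obstacle throughout Part (3) is the bookkeeping of the extra factor of $p$ that distinguishes site from bond percolation (as flagged in the introduction): one must track where the ``missing'' $p$ sits, use $\triangle_p(x) = p^2 V_p^{(0,3)}(x)$ and the identity $W_p = pW_p^{(0,0)} + pW_p^{(0,1)}$ together with Lemma~\ref{lem:bootstrap:bounds_on_V} to extract convolutions of $\jeq$ that supply the needed powers of $2d$, and apply the random-walk bounds Propositions~\ref{thm:random_walk_triangle} and~\ref{thm:random_walk_triangle_related} to close the estimates.

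\textbf{Conclusion.} By (1), $f(0)\leq 3 < 4$. Suppose for contradiction that $f(p_1) > 3$ for some $p_1\in[0,p_c)$; since $f$ is continuous (Part (2)) and $f(0)\leq 3$, there is $p_0\in(0,p_1]$ with $3 < f(p_0) \leq 4$ (taking $p_0$ just past where $f$ first exceeds $3$, using that $f$ cannot jump from $\leq 3$ to $>4$). But Part (3) applied at $p_0$ forces $f(p_0)\leq 3$, a contradiction. Hence $f(p)\leq 3$ for all $p\in[0,p_c)$, for all $d$ exceeding the $d_0$ furnished by Part (3).
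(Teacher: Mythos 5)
Your overall architecture — establish $f(0)\leq 3$, continuity on $[0,p_c)$, the improvement $f\leq 4\Rightarrow f\leq 3$, and then conclude by an intermediate-value argument — is exactly the paper's, and Parts (1) and the final combination step are fine. Part (2) takes a mildly different route: you argue continuity of the family $p\mapsto p\ftau(k)/\fgmu(k)$ uniformly in $k$ via tail control of $\sum_x\taup(x)$, whereas the paper differentiates the ratio in $p$ and bounds the derivative uniformly in $k$ using Observation~\ref{obs:tools:diff_inequality}; both work, and both ultimately defer the same routine details to the literature.

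Part (3), however, is where the real content lies, and your sketch has genuine gaps there. For the $f_1$-improvement, your displayed formulas are garbled and the argument never lands: the clean step is that under the OZE one has $\lamp=\hao=2dp+p\fpip(0)$, whence $2dp=\lamp-p\fpip(0)\leq 1+c/d$ because $\lamp=1-1/\chi(p)\leq 1$ and $|p\fpip(0)|\leq c/d$; nothing in your write-up substitutes for the identification $\lamp=\hao$. For the $f_3$-improvement, two ingredients are missing. First, after applying $\Delta_k$ to $p\ftau(l)=\widehat a(l)/(1-\widehat a(l))$ you must convert every factor $|1-\widehat a(\cdot)|^{-1}$ into a Green's function factor $\fgmu(\cdot)$ appearing in $\ulam$; the paper does this via a separate case analysis (Observation~\ref{obs:bootstrap:FR:f4_bound}, proving $|(1-\lamp\fconnf(l))/(1-\widehat a(l))|\leq 3$, which itself consumes the already-improved bounds on $f_1$ and $f_2$). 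You never address this conversion; a cruder substitute via $|1-\widehat a(l)|^{-1}=|1+p\ftau(l)|\leq 1+4\fgmu(l)$ exists, but you would need to say so and check the constants against the $3000$. Second, the cross term $\big(\widehat a(l+\sigma k)-\widehat a(l)\big)^2$ is a \emph{first} difference squared, so Observation~\ref{obs:Delta_k_bounds} (which controls the symmetric second difference $\Delta_k$) does not apply to it; one needs the separate estimate $|\widehat a(l\pm k)-\widehat a(l)|\leq C[1-\fconnf(k)]^{1/2}$, obtained by splitting $\cos((l\pm k)\cdot x)-\cos(l\cdot x)$ into a sine term (handled by Cauchy--Schwarz against $D$) and a $[1-\cos(k\cdot x)]$ term. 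None of the tools you cite supplies this square-root bound, and without it the term (II) in the decomposition cannot be closed.
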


As a remark, in the third step of Proposition~\ref{thm:bootstrap:forbidden_region}, we prove the stronger statement $f_i(p) \leq 1+ \textup{const} / d$ for $i \in \{1,2\}$. 
In the remainder of the paper, we prove this proposition and thereby finish the proof the main theorem. We prove each of the three assertions separately. 

\paragraph{1. Bounds on $f(0)$.} This one is straightforward. As $f_1(p) = 2dp$, we have $f_1(0)=0$.

Further, recall that $\lambda_p =1-1/\chi(p)$, and so $\lambda_0=0$ and $\widehat{G}_{\lambda_0}(k)=\widehat{G}_0(k) = 1$ for all $k\in\fspace$. Since both $p|\ftau(k)|$ and $p |\widehat \tau_{p,k}(l)|$ equal $0$ at $p=0$, recalling the definitions of $f_2$ and $f_3$ in~\eqref{eq:boot:intro:f_functions_def}, we conclude that $f_2(0) = f_3(0) =0$. In summary, $f(0)=0$.

\paragraph{2. Continuity of $f$.} The continuity of $f_1$ is obvious. For the continuity of $f_2$ and $f_3$, we proceed as in~\cite{HeyHof17}, that is, we prove continuity on $[0,p_c-\varepsilon]$ for every $0<\varepsilon<p_c$. This again is done by taking derivatives and bounding them uniformly in $k$ and in $p \in[0,p_c-\varepsilon]$. To this end, we calculate
	\eqq{ \frac{\dd}{\dd p} \frac{\ftau(k)}{\fgmu(k)} = \frac{1}{\fgmu(k)^2} \Big[\fgmu(k) \frac{\dd\ftau(k)}{\dd p}
			 - \ftau(k) \frac{\dd \fgreenslam(k)}{\dd \lambda} \bigg\vert_{\lambda=\lambda_p} \frac{\dd \lambda_p}{\dd p} \Big]. \label{eq:boot:continuity_f2}}
Since $\lambda_p = 1-1/\chi(p)$,
	\eqq{ \frac 12 \leq \frac{1}{1-\lambda_p \fconnf(k)} = \fgmu(k) \leq \fgmu(0) = \chi(p) \leq \chi(p_c-\varepsilon). \label{eq:boot:continuity_f2_fgmu_bd} }
Further, since $\ftau(0)$ is non-decreasing,
	\eqq{ \ftau(k) \leq \ftau(0) \leq \widehat \tau_{p_c-\varepsilon}(0) = \frac{\chi(p_c-\varepsilon)-1}{p_c-\varepsilon}. \label{eq:boot:continuity_f2_fttaup_bd} }
We use Observation~\ref{obs:tools:diff_inequality} to obtain
	\[ \left\vert \frac{\dd}{\dd p} \ftau(k) \right\vert =  \bigg\vert \sum_{x\in\Zd} \e^{\i k \cdot x} \frac{\dd}{\dd p} \taup(k) \bigg\vert
		 \leq \sum_{x\in\Zd} \frac{\dd}{\dd p} \taup(x) = \frac{\dd}{\dd p} \sum_{x\in\Zd} \taup(x) \leq \ftau(0)^2, \]
and the same bound as in~\eqref{eq:boot:continuity_f2_fttaup_bd} applies. The interchange of sum and derivative is justified as both sums are absolutely summable.
Note that $\dd \fgreenslam(k)/\dd \lambda = \fconnf(k) \fgreenslam(k)^2$, and this is bounded by $\chi(p_c-\varepsilon)^2$ for $\lambda=\lambda_p$ by~\eqref{eq:boot:continuity_f2_fgmu_bd}. Finally, by Observation~\ref{obs:tools:diff_inequality},
	\[ \frac{\dd \lambda_p}{\dd p} = \frac{\frac{\dd}{\dd p}\chi(p)}{\chi(p)^2} \leq \ftau(0),\]
which is bounded by~\eqref{eq:boot:continuity_f2_fttaup_bd} again. In conclusion, all terms in~\eqref{eq:boot:continuity_f2} are bounded uniformly in $k$ and $p$, which proves the continuity of $f_2$. We can treat $f_3$ in the exact same manner, as it is composed of terms of the same type as the ones we just bounded.

\paragraph{3. The forbidden region $(3,4]$.} Note that we assume $f(p) \leq 4$ in the following, and so the second part of Proposition~\ref{thm:convergence_of_LE} applies with $c=c_4$.

\underline{Improvement of $f_1$.} Recalling the definition of $\lambda_p \in[0,1]$, this implies
	\[f_1(p) = \lambda_p - p\fpip(0) \leq 1 + c_4/d.\]

\underline{Improvement of $f_2$.} We introduce $a = p(\jeq + \Pi_p)$, and moreover
	\[ \hnk= \frac{\hak}{1+p\fpip(0)}, \qquad \hfk = \frac{1-\hak}{1+p\fpip(0)}.\]
By adapting the analogous argument from~\cite[proof of Theorem 1.1]{HeyHofLasMat19}, we can show that $1-\hak>0$. Therefore, under the assumption that $f(p) \leq 4$, we have $p\ftau(k) = \hnk/\hfk = \hak /(1-\hak)$, and furthermore $\lambda_p= \hao$. In the following lines, $M$ and $M'$ denote constants (typically multiples of $c_4$) whose value may change from line to line. An important observation is that
	\[ \frac{1}{1+p\fpip(0)} \leq 1 + M/d, \qquad |\hak| \leq 1 + M/d, \qquad |p\fpip(k)| \leq M/d. \]
We are now ready to treat $f_2$. Since $|\hnk| \leq 1+M/d$ and by the triangle inequality,
	\algn{ \bigg\vert\frac{p\ftau(k)}{\fgmu(k)} \bigg\vert &=  \Big\vert \hnk + p \ftau(k) [1- \lambda_p \fconnf(k) - \hfk] \Big\vert \notag \\
				 &\leq 1+M/d + \big\vert p \ftau(k) \big\vert \Big\vert 1- \lambda_p \fconnf(k) - \hfk \Big\vert. \label{eq:boot:FR:f2_main_identity}}
Also,
	\algn{\big\vert 1- \lambda_p \fconnf(k) - \hfk \big\vert &= 
						\bigg\vert\frac{1+p\fpip(0)-\big(2dp + p\fpip(0)\big)\big(1+p\fpip(0)\big)\fconnf(k) - 1 + 2dp \fconnf(k) + p\fpip(k)}{1+p\fpip(0)} \bigg\vert \notag\\
		&= \bigg\vert\frac{[1-\fconnf(k)] p\fpip(0)}{1+p\fpip(0)} \bigg\vert + \bigg\vert\frac{\hao p\fpip(0) \fconnf(k) + p\fpip(k)}{1+p\fpip(0)} \bigg\vert. \label{eq:boot:FR:f2_decomp_step}}
The first term in the right-hand side of~\eqref{eq:boot:FR:f2_decomp_step} is bounded by $[1-\fconnf(k)] M/d$. In the second term, recalling that $\hao=\lambda_p$, we add and subtract $p\fpip(0)$ in the numerator, and so 
	\al{ \big\vert 1- \lambda_p \fconnf(k) - \hfk \big\vert &\leq  [1-\fconnf(k)] M/d + \bigg\vert \frac{[1-\lambda_p \fconnf(k)] p\fpip(0) + p\big(\fpip(k)-\fpip(0)\big)}{1+p\fpip(0)} \bigg\vert \\
		& \leq [1-\fconnf(k)] M'/d + [1-\lambda_p \fconnf(k)] M'/d \leq 3 [1-\lambda_p \fconnf(k)] M'/d  }
for some constant $M'$. In the second to last bound, we have used that $p\fpip(0) - p\fpip(k) \leq [1-\fconnf(k)] M/d$. For the last, we have used that $1-\fconnf(k) \leq 2(1-\lambda_p\fconnf(k)) = 2 \fgmu(k)^{-1}$. Putting this back into~\eqref{eq:boot:FR:f2_main_identity}, we obtain
\[ \left\vert \frac{p \ftau(k)}{\fgmu(k)} \right\vert \leq 1+ M/d + 3|p \ftau(k) / \fgmu(k)| M'/d \leq 1+4(M \vee M') /d.\]
This concludes the improvement on $f_2$. Before dealing with $f_3$, we make an important observation:
\begin{observation}\label{obs:bootstrap:FR:f4_bound}
Given the improved bounds on $f_1$ and $f_2$,
	\[ \sup_{k\in\fspace} \left\vert \frac{1-\lambda_p \fconnf(k)}{1-\hak} \right\vert \leq 3. \]
\end{observation}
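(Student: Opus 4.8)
The plan is to realise $1-\hak$ as a small perturbation of $1-\lambda_p\fconnf(k)$ and to finish with a reverse triangle inequality. Recall that $a=p(\jeq+\Pi_p)$, so $\hak = 2dp\,\fconnf(k)+p\fpip(k)$ (using $\fjeq(k)=2d\fconnf(k)$), and that $\lambda_p=\hao=2dp+p\fpip(0)$. Since $\Pi_p$ is symmetric, $\fpip$ is real-valued, so $\hak$ and hence $1-\hak$ are real; moreover $1-\hak>0$ was already established in the improvement of $f_2$, and $1-\lambda_p\fconnf(k)\ge0$ because $\lambda_p\in[0,1)$ and $\fconnf(k)\in[-1,1]$, with strict positivity for $k\neq0$. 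Thus it suffices to bound the nonnegative ratio. Subtracting the two defining expressions gives $\hak-\lambda_p\fconnf(k)=p\fpip(k)-p\fpip(0)\fconnf(k)$, that is,
\[ 1-\hak = \big(1-\lambda_p\fconnf(k)\big) + \big(p\fpip(0)\fconnf(k)-p\fpip(k)\big). \]

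Next I would bound the error term $p\fpip(0)\fconnf(k)-p\fpip(k) = -p\fpip(0)\big(1-\fconnf(k)\big)+\big(p\fpip(0)-p\fpip(k)\big)$. Under the standing assumption $f\le4$, the second part of Proposition~\ref{thm:convergence_of_LE} (i.e.~\eqref{eq:boot:convergence_of_Pi_M} with $c_f$ replaced by $c$, applied to the limit $\Pi_p$) yields $|p\fpip(0)|\le p\sum_x|\Pi_p(x)|\le c/d$ and $|p\fpip(0)-p\fpip(k)|=\big|p\sum_x(1-\cos(k\cdot x))\Pi_p(x)\big|\le[1-\fconnf(k)]\,c/d$, whence $|p\fpip(0)\fconnf(k)-p\fpip(k)|\le (2c/d)\,[1-\fconnf(k)]$. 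Combining this with the elementary inequality $1-\fconnf(k)\le 2\big(1-\lambda_p\fconnf(k)\big)$ — valid for every $\lambda_p\in[0,1]$ and $k\in\fspace$, since $2(1-\lambda_p\fconnf(k))-(1-\fconnf(k))=1+(1-2\lambda_p)\fconnf(k)\ge0$ (check by distinguishing the sign of $1-2\lambda_p$), and which is the very bound already used in the $f_2$ improvement — gives $|p\fpip(0)\fconnf(k)-p\fpip(k)|\le (4c/d)\big(1-\lambda_p\fconnf(k)\big)$.

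Feeding this into the displayed identity via the reverse triangle inequality produces $1-\hak\ge (1-4c/d)\big(1-\lambda_p\fconnf(k)\big)$, and for $d$ large enough that $4c/d\le 2/3$ this is at least $\tfrac13\big(1-\lambda_p\fconnf(k)\big)$; since $1-\hak>0$ this gives $\big|(1-\lambda_p\fconnf(k))/(1-\hak)\big|\le3$ for $k\neq0$. For $k=0$ the ratio equals $(1-\lambda_p)/(1-\hao)=1$ because $\fconnf(0)=1$ and $\lambda_p=\hao$, so the bound holds there as well. The argument is essentially bookkeeping around the smallness of $p\fpip$; the only mildly delicate point is the comparison $1-\fconnf(k)\le2(1-\lambda_p\fconnf(k))$ for negative $\fconnf(k)$, a one-line case check that is anyway reused from the $f_2$ step, so I anticipate no genuine obstacle.
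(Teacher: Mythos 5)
Your proposal is correct, and it takes a genuinely different route from the paper. The paper proves the observation by a three-way case split: for $2dp\le 3/7$ it bounds numerator and denominator separately by crude constants; for $|\fconnf(k)|\le 7/8$ it writes the ratio as $1-\big(p\fpip(0)\fconnf(k)-p\fpip(k)\big)/(1-\hak)$ and bounds the denominator below by a constant; and in the remaining regime ($2dp>3/7$, $|\fconnf(k)|>7/8$) it rewrites $1-\hak=\hak/\big(p\ftau(k)\big)$ and invokes the freshly improved $f_2$ bound. You instead use the single exact identity $1-\hak=\big(1-\lambda_p\fconnf(k)\big)+\big(p\fpip(0)\fconnf(k)-p\fpip(k)\big)$ and show the perturbation is at most $(4c/d)\big(1-\lambda_p\fconnf(k)\big)$, via the displacement bound $|p\fpip(0)-p\fpip(k)|\le[1-\fconnf(k)]c/d$ and the elementary comparison $1-\fconnf(k)\le 2\big(1-\lambda_p\fconnf(k)\big)$ — both of which the paper itself already deploys in the $f_2$ improvement. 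All your individual steps check out: $\hak-\lambda_p\fconnf(k)=p\fpip(k)-p\fpip(0)\fconnf(k)$ follows from $\lambda_p=\hao$, the sign check for $1+(1-2\lambda_p)\fconnf(k)\ge0$ is valid since $\lambda_p\in[0,1)$ and $|\fconnf(k)|\le1$, and strict positivity of $1-\lambda_p\fconnf(k)$ makes the final quotient legitimate. What your argument buys is uniformity in $k$ and $p$ without any case distinction, and a weaker hypothesis: you never actually use the improved $f_1$ or $f_2$ bounds, only the $f\le4$-uniform bounds on $\fpip$ from Proposition~\ref{thm:convergence_of_LE}; the paper's case~3 genuinely relies on the improved $f_2$ bound. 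The only cost is that your threshold $d_0=6c$ is tied to the (unspecified) constant $c$, but the paper's proof has the same feature ("for $d$ sufficiently large"), so nothing is lost.
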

\begin{proof}
Consider first those $p$ such that $2dp \leq 3/7$. Then
	\[ \left\vert \frac{1-\lambda_p \fconnf(k)}{1-\hak} \right\vert = \left\vert \frac{1- 2dp \fconnf(k) - p \fpip(0) \fconnf(k)}{1- 2dp \fconnf(k) - p\fpip(k)} \right\vert
					\leq \frac{\frac{10}{7} + M/d}{\frac 47 + M/d} \leq (1+ M' /d) \tfrac 52 \leq 3 \]
for $d$ sufficiently large. Next, consider those $k\in\fspace$ such that $|\fconnf(k)| \leq 7/8$. Then
	\[ \left\vert \frac{1-\lambda_p \fconnf(k)}{1-\hak} \right\vert = \left\vert 1 - \frac{p\fpip(0) \fconnf(k) - p\fpip(k)}{1-\hak} \right\vert
					 \leq 1 + \frac{2M /d}{1-(1+M/d) \frac 78 - M/d}  \leq 1 + 16 M' /d \]
for $d$ sufficiently large. Let now $p$ such that $2dp> 3/7$ and $k$ such that $|\fconnf(k)| > 7/8$. We write $1-\lambda_p \fconnf(k) = \fgmu(k)^{-1}$ and $1-\hak = \ftau(k)/\hak$. Since $2dp|\fconnf(k)| \geq 3/8$, we obtain
	\al{ \left\vert \frac{1-\lambda_p \fconnf(k)}{1-\hak} \right\vert & \leq \frac 83 \left\vert \frac{\ftau(k)}{\fgmu(k)} \cdot \frac{2dp \fconnf(k)}{\hak} \right\vert
					\leq \frac 83 \big(1+M /d\big) \left\vert 1 - \frac{p\fpip(k)}{\hak} \right\vert \\
		& \leq \frac 83 \big(1+M /d\big) \Big(1+ \frac{M/d}{\frac 83 - M /d}   \Big) \leq 3  }
for $d$ sufficiently large.
\end{proof}
\underline{Improvement of $f_3$.} Elementary calculations give
	\[ \Delta_k p \ftau(l) = \underbrace{\frac{\Delta_k \widehat a(l)}{1-\widehat a(l)}}_{\text{(I)}}
						+ \sum_{\sigma\in \pm 1} \underbrace{\frac{\big(\widehat a(l+\sigma k) - \widehat a(l) \big)^2}{(1-\widehat a(l))(1-\widehat a(l+\sigma k))}}_{\text{(II)}}
						+ \underbrace{\widehat a(l) \Delta_k \Big( \frac{1}{1-\widehat a(l)} \Big)}_{\text{(III)}}. \]
We bound each of the three terms (I)-(III) separately. For the first term,
	\al{|\text{(I)}| & = \big\vert \Delta_k \widehat a(l) \big\vert \cdot \left\vert \frac{1-\lambda_p \fconnf(l)}{1-\widehat a(l)} \right\vert \cdot \fgmu(l)
					\leq 3 \fgmu(l) \Big\vert 2dp \Delta_k \fconnf(l) + \Delta_k p \fpip(l) \Big\vert \\
					& \leq (3 + M/d) [1-\fconnf(k)] \fgmu(l) \fgmu(l+k).  }
In the above, we have first used Observation~\ref{obs:bootstrap:FR:f4_bound}, then Observation~\ref{obs:Delta_k_bounds}, and finally the fact that $\fgmu(l+k) \geq 1/2$. Note that if we obtained similar bounds on (II) and (III), we could prove a bound of the form $|\Delta_k \ftau(l)| \leq c \ulam(k,l)$ for the right constant $c$ and the improvement of $f_3$ would be complete.

To deal with (II), we need a bound on $\widehat a(l+\sigma k)-\widehat a(l)$ for $\sigma\in\{\pm 1\}$. As in~\cite[(8.4.19)-(8.4.21)]{HeyHof17},
	\al{|\fconnf(l\pm k) - \fconnf(l)| & \leq \sum_x \Big( | \sin(k \cdot x)| \connf(x) + [1-\cos(k \cdot x)] \connf(x) \Big) = 1-\fconnf(k) + \sum_x | \sin(k \cdot x)| \connf(x) \\
		& \leq 1-\fconnf(k) + \Big( \sum_x \connf(x) \Big)^{1/2} \Big( \sum \sin(k \cdot x)^2 D(x) \Big)^{1/2} \\
		& \leq 1-\fconnf(k) + 2 \Big( \sum [1-\cos(k\cdot x)] D(x) \Big)^{1/2} \\
		& \leq 4 [1-\fconnf(k)]^{1/2},}
and similarly
	\al{p|\fpip(l \pm k) - \fpip(l)| & \leq \Big(p \sum_x |\Pi_p(x)| \Big)^{1/2} \Big( 2p \sum_x [1-\cos(k\cdot x)] |\Pi_p(x)| \Big)^{1/2} + p\sum_x [1-\cos(k\cdot x)] |\Pi_p(x)| \\
		& \leq  M [1-\fconnf(k)]^{1/2} /d.}
Putting this together yields
	\eqq{ |\widehat a(l \pm k)-\widehat a(l)| \leq 2dp |\fconnf(l\pm k) - \fconnf(l)| + p|\fpip(l \pm k) - \fpip(l)| \leq (16+M/d) [1-\fconnf(k)]^{1/2}. \label{eq:boot:FR:hat_a_diff_bound}}
Combining~\eqref{eq:boot:FR:hat_a_diff_bound} with Observation~\ref{obs:bootstrap:FR:f4_bound} yields
	\al{\text{(II)} &\leq (16+M/d)^2 [1-\fconnf(k)] \left\vert \frac{1-\lambda_p \fconnf(l)}{1-\widehat a(l)} \right\vert 
					\left\vert \frac{1-\lambda_p \fconnf(l+\sigma k)}{1-\widehat a(l+\sigma k)} \right\vert \fgmu(l) \fgmu(l+\sigma k) \\
			& \leq (144 + M' /d) [1-\fconnf(k)] \fgmu(l) \fgmu(l+\sigma k). }
To bound (III), we want to use Lemma~\ref{lem:bootstrap:Delta_k_Ulam_bound}. We first provide bounds for the three types of quantities arising in the use of the lemma. First, note that $|\widehat a(l)| \leq 4+M/d$. Next, we observe
	\[ \widehat{|a|} (0) - \widehat{|a|}(k) = \sum_x [1-\cos(k\cdot x)] \big\vert 2dp \connf(x) + p\Pi_p(x) \big\vert \leq (4+M/d) [1-\fconnf(k)].\]
The third ingredient we need is Observation~\ref{obs:bootstrap:FR:f4_bound}, which produces $|1-\widehat a(l)|^{-1} \leq 3 \fgmu(l)$. Putting all this together and applying Lemma~\ref{lem:bootstrap:Delta_k_Ulam_bound} gives
	\al{\Delta_k \Big( \frac{1}{1-\widehat a(l)} \Big) & \leq (4+M/d) [1-\fconnf(k)] \bigg( 9 \big[ \fgmu(l-k) + \fgmu(l+k) \big] \fgmu(l) \\
		& \hspace{3cm} + 216 (4+M/d) \fgmu(l-k)\fgmu(l+k) \fgmu(l) [1-\fconnf(l)] \bigg)\\
		& \leq (6912 + M'/d) [1-\fconnf(k)] \Big( \fgmu(l-k) + \fgmu(l+k) \big] \fgmu(l) + \fgmu(l-k)\fgmu(l+k) \Big), }
noting that $\fgmu(l) [1-\fconnf(l)] \leq 2$. In summary, $\text{(I)}+\text{(II)}+\text{(III)} \leq 3 \ulam(k,l),$ which finishes the improvement on $f_3$. 
This finishes the proof of Proposition~\ref{thm:bootstrap:forbidden_region}, and therewith also the proof of Theorem~\ref{thm:main_theorem_triangle_condition}.

\paragraph{Acknowledgment.}  We thank G\"unter Last,  Remco van der Hofstad, and Andrea Schmidbauer for numerous discussions about the lace expansion, which are reflected in the presented paper. 

\bibliography{bibliography}{}

\providecommand{\bysame}{\leavevmode\hbox to3em{\hrulefill}\thinspace}
\providecommand{\MR}{\relax\ifhmode\unskip\space\fi MR }
\providecommand{\MRhref}[2]{%
  \href{http://www.ams.org/mathscinet-getitem?mr=#1}{#2}
}
\providecommand{\href}[2]{#2}
\begin{thebibliography}{10}

\bibitem{AizNew84}
Michael {Aizenman} and Charles~M. {Newman}, \emph{{Tree graph inequalities and
  critical behavior in percolation models}}, {J. Stat. Phys.} \textbf{36}
  (1984), 107--144.

\bibitem{Ngu87}
{Bao Gia Nguyen}, \emph{{Gap exponents for percolation processes with triangle
  condition}}, {J. Stat. Phys.} \textbf{49} (1987), no.~1-2, 235--243.

\bibitem{AizBar91}
David~J. {Barsky} and Michael {Aizenman}, \emph{{Percolation critical exponents
  under the triangle condition}}, {Ann. Probab.} \textbf{19} (1991), no.~4,
  1520--1536.

\bibitem{BefDum13}
Vincent {Beffara} and Hugo {Duminil-Copin}, \emph{{Planar percolation with a
  glimpse of Schramm-Loewner evolution}}, {Probab. Surv.} \textbf{10} (2013),
  1--50.

\bibitem{BerKes85}
Jacob van~den Berg and Harry Kesten, \emph{{Inequalities with applications to
  percolation and reliability}}, {J. Appl. Probab.} \textbf{22} (1985),
  556--569.

\bibitem{BolRio06}
B\'ela {Bollob\'as} and Oliver {Riordan}, \emph{{Percolation}}, Cambridge:
  Cambridge University Press, 2006.

\bibitem{BorChaHofSlaSpe05}
Christian {Borgs}, Jennifer~T. {Chayes}, Remco {van der Hofstad}, Gordon
  {Slade}, and Joel {Spencer}, \emph{{Random subgraphs of finite graphs. II:
  The lace expansion and the triangle condition}}, {Ann. Probab.} \textbf{33}
  (2005), no.~5, 1886--1944.

\bibitem{BroHam57}
Simon~B. {Broadbent} and John~M. {Hammersley}, \emph{{Percolation processes. I:
  Crystals and mazes}}, {Proc. Camb. Philos. Soc.} \textbf{53} (1957),
  629--641.

\bibitem{FitHof16}
Robert {Fitzner} and Remco {van der Hofstad}, \emph{{Generalized approach to
  the non-backtracking lace expansion}}, {Probab. Theory Relat. Fields}
  \textbf{169} (2017), no.~3-4, 1041--1119.

\bibitem{FitHof17}
\bysame, \emph{{Mean-field behavior for nearest-neighbor percolation in
  \(d>10\)}}, {Electron. J. Probab.} \textbf{22} (2017), 65, Id/No 43.

\bibitem{GauRusSyk76}
D.S. Gaunt, M.F. Sykes, and H.J. Ruskin, \emph{Percolation processes in
  d-dimensions}, J. Phys. A \textbf{9} (1976), no.~11, 1899--1911.

\bibitem{Gri99}
Geoffrey Grimmett, \emph{Percolation}, 2nd ed., Grundlehren der Mathematischen
  Wissenschaften [Fundamental Principles of Mathematical Sciences], vol. 321,
  Springer-Verlag, Berlin, 1999.

\bibitem{HarSla90}
Takashi {Hara} and Gordon {Slade}, \emph{{Mean-field critical behaviour for
  percolation in high dimensions}}, {Commun. Math. Phys.} \textbf{128} (1990),
  no.~2, 333--391.

\bibitem{HarSla94}
\bysame, \emph{{Mean-field behaviour and the lace expansion}}, {Probability and
  phase transition. Proceedings of the NATO Advanced Study Institute on
  probability theory of spatial disorder and phase transition, Cambridge, UK,
  July 4-16, 1993}, Dordrecht: Kluwer Academic Publishers, 1994, pp.~87--122.

\bibitem{HarSla95}
\bysame, \emph{{The self-avoiding-walk and percolation critical points in high
  dimensions}}, {Comb. Probab. Comput.} \textbf{4} (1995), no.~3, 197--215.

\bibitem{HeyMat19b}
Markus {Heydenreich} and Kilian {Matzke}, \emph{{Expansion for the critical
  point of site percolation: the first three terms}}, arXiv e-prints (2019),
  arXiv:1912.04584.

\bibitem{HeyHof07}
Markus {Heydenreich} and Remco {van der Hofstad}, \emph{{Random graph
  asymptotics on high-dimensional tori.}}, {Commun. Math. Phys.} \textbf{270}
  (2007), no.~2, 335--358 (English).

\bibitem{HeyHof17}
\bysame, \emph{{Progress in high-dimensional percolation and random graphs}},
  Cham: Springer, 2017.

\bibitem{HeyHofLasMat19}
Markus {Heydenreich}, Remco {van der Hofstad}, G{\"u}nter {Last}, and Kilian
  {Matzke}, \emph{{Lace Expansion and Mean-Field Behavior for the Random
  Connection Model}}, arXiv e-prints (2019), arXiv:1908.11356.

\bibitem{HeyHofSak08}
Markus {Heydenreich}, Remco {van der Hofstad}, and Akira {Sakai},
  \emph{{Mean-field behavior for long- and finite range Ising model,
  percolation and self-avoiding walk}}, {J. Stat. Phys.} \textbf{132} (2008),
  no.~6, 1001--1049.

\bibitem{HofSla06}
Remco van~der Hofstad and Gordon Slade, \emph{{Expansion in \(n^{-1}\) for
  percolation critical values on the \(n\)-cube and \(\mathbb Z^n\): the first
  three terms}}, {Comb. Probab. Comput.} \textbf{15} (2006), no.~5, 695--713.

\bibitem{JanLucRuc00}
Svante {Janson}, Tomasz {{\L}uczak}, and Andrzej {Ruci\'nski}, \emph{{Random
  graphs}}, New York, NY: Wiley, 2000.

\bibitem{LasZie17}
G\"unter Last and Sebastian Ziesche, \emph{{On the Ornstein–Zernike equation
  for stationary cluster processes and the random connection model}}, Adv.
  Appl. Probab. \textbf{49} (2017), no.~4, 1260–1287.

\bibitem{Mar74}
Grigorii~A. {Margulis}, \emph{{Probabilistic properties of graphs with large
  connectivity}}, {Probl. Peredachi Inf.} \textbf{10} (1974), no.~2, 101--109
  (Russian).

\bibitem{MerMoo18}
Stephan {Mertens} and Cristopher {Moore}, \emph{{Series expansion of the
  percolation threshold on hypercubic lattices}}, {J. Phys. A, Math. Theor.}
  \textbf{51} (2018), no.~47, 38, Id/No 475001.

\bibitem{Rus81}
Lucio {Russo}, \emph{{On the critical percolation probabilities}}, {Z.
  Wahrscheinlichkeitstheor. Verw. Geb.} \textbf{56} (1981), 229--237.

\bibitem{Sla06}
Gordon {Slade}, \emph{{The lace expansion and its application. \'Ecole
  d'\'Et\'e de Probabilit\'es de Saint-Flour XXXIV -- 2004}}, vol. 1879,
  Berlin: Springer, 2006.

\bibitem{Wer09}
Wendelin {Werner}, \emph{{Percolation et mod\`ele d'Ising}}, vol.~16, Paris:
  Soci\'et\'e Math\'ematique de France, 2009 (French).

\end{thebibliography}
\bibliographystyle{amsplain}

\end{document}